\newtheorem{theorem}{Theorem}[section]
\newtheorem{proposition}[theorem]{Proposition}
\newtheorem{lemma}[theorem]{Lemma}
\newtheorem{corollary}[theorem]{Corollary}
\theoremstyle{definition}
\newtheorem{definition}[theorem]{Definition}
\newtheorem{remark}[theorem]{Remark}
\numberwithin{equation}{section}
\begin{document}

\baselineskip=15pt

\title[Parabolic opers and differential operators]{Parabolic opers and differential operators}

\author[I. Biswas]{Indranil Biswas}

\address{School of Mathematics, Tata Institute of Fundamental
Research, Homi Bhabha Road, Mumbai 400005, India}

\email{indranil@math.tifr.res.in}

\author[N. Borne]{Niels Borne}

\address{Universit\'e Lille 1, Cit\'e scientifique
U.M.R. CNRS 8524, U.F.R. de Math\'ematiques
59 655 Villeneuve d'Ascq C\'edex, France}

\email{Niels.Borne@math.univ-lille1.fr}

\author[S. Dumitrescu]{Sorin Dumitrescu}

\address{Universit\'e C\^ote d'Azur, CNRS, LJAD, France}

\email{dumitres@unice.fr}

\author[S. Heller]{Sebastian Heller}

\address{Beijing Institute of Mathematical Sciences and Applications,
Yanqi Island, Huairou District, Beijing 101408}

\email{sheller@bimsa.cn}

\author[C. Pauly]{Christian Pauly}

\address{Universit\'e C\^ote d'Azur, CNRS, LJAD, France}

\email{pauly@unice.fr}

\subjclass[2010]{14H60, 33C80, 53A55}

\keywords{Oper, parabolic bundle, differential operator, logarithmic connection}

\date{}

\begin{abstract}
Parabolic ${\rm SL}(r,\mathbb C)$--opers were defined and investigated in \cite{BDP} in the 
set-up of vector bundles on curves with a parabolic structure over a divisor. Here we 
introduce and study holomorphic differential operators between parabolic vector bundles over 
curves. We consider the parabolic ${\rm SL}(r,\mathbb C)$--opers on a Riemann 
surface $X$ with given singular divisor $S \,\subset\, X$ and with fixed parabolic weights satisfying
the condition that all parabolic weights at any $x_i\, \in\, S$ are
integral multiples of $\frac{1}{2N_i+1}$, where $N_i\,>\,1$ are fixed integers. We prove that this
space of opers is canonically identified with the affine space of holomorphic differential 
operators of order $r$ between two natural parabolic line bundles on $X$ (depending only on the divisor $S$ 
and the weights $N_i$) satisfying the conditions that the principal symbol of the
differential operators is the constant function $1$ and the sub-principal symbol vanishes identically.
The vanishing of the sub-principal symbol ensures that the logarithmic connection on the rank $r$ bundle
is actually a logarithmic ${\rm SL}(r,\mathbb C)$--connection. 
\end{abstract}

\maketitle

\tableofcontents

\section{Introduction}\label{se1}

After the seminal work of Drinfeld and Sokolov \cite{DS1}, \cite{DS2}, the notion of opers was 
introduced by Beilinson and Drinfeld \cite{BD1, BD2} as geometric structures on Riemann 
surfaces that formalize the notion of ordinary differential equations in a coordinate-free 
way. This broad formalism encapsulates the classical notion of a Riccati equation, or 
equivalently that of a complex projective structure on a Riemann surface, as being an ${\rm 
SL}(2,\mathbb C)$--oper. Since then the notion of oper turned out to be very important, not 
only in the study of differential equations, but also in very diverse topics, as for example, 
geometric Langlands correspondence, nonabelian Hodge theory and also some branches of 
mathematical physics; see, for example, \cite{BF}, \cite{DFKMMN}, \cite{FT}, \cite{FG}, 
\cite{FG2}, \cite{CS}, \cite{Fr}, \cite{Fr2}, \cite{BSY} and references therein. In contemporary
research in mathematics and mathematical physics, 
the study of opers and their applications have been firmly established as an important topic,
testified by the works of many. In particular, important progress in the understanding of opers was 
carried out in \cite{BD1, BD2, FG, FG2, AB, W, ABF, In, IIS1, IIS2}.

In \cite{BDP}, three of the authors introduced and studied parabolic ${\rm SL}(r, {\mathbb 
C})$--opers on curves in the set-up of parabolic vector bundles as defined by Mehta and 
Seshadri, \cite{MS}, and also by Maruyama and Yokogawa \cite{MY}.

Later on, being inspired by the works \cite{AB, Sa}, the 
infinitesimal deformations of parabolic ${\rm SL}(r, {\mathbb C})$--opers and also the monodromy map
for parabolic ${\rm SL}(r, {\mathbb C})$--opers
were studied in \cite{BDHP}. It may be mentioned that the appendix of \cite{BDHP} provides an alternative 
definition of a parabolic ${\rm SL}(r,{\mathbb C})$--oper in terms of $\mathbb{R}$-filtered 
sheaves as introduced and studied by Maruyama and Yokogawa in \cite{MY}. This definition is 
conceptually closer to the definition of an ordinary ${\rm SL}(r,{\mathbb C})$--oper and 
clarifies the one given in \cite{BDP}.

The objective of this article is to further investigate parabolic ${\rm SL}(r, {\mathbb C})$--opers 
and to characterize them as a special class of holomorphic differential operators on parabolic 
bundles. It should be recalled that the relation between opers and differential operators is 
established and well-known in the context of ordinary opers \cite{BD1}. Here we introduce and 
study holomorphic differential operators on parabolic vector bundles over Riemann surfaces 
under the condition that at each point $x_i$ on the singular divisor $S$ all the parabolic 
weights are integral multiples of $\frac{1}{2N_i+1}$, with $N_i \,>\,1$ being an integer. Under this 
assumption, the main result of the article, Theorem \ref{thm1}, proves that the space of all 
parabolic ${\rm SL}(r,{\mathbb C})$--opers on $X$ with given singular set $S\,:=\, \{x_1,\, 
\cdots,\, x_n\}\, \subset\, X$ and fixed parabolic weights integral multiples of 
$\frac{1}{2N_i+1}$ at each $x_i \,\in\, S$, is canonically identified with the affine space of $r$-order 
holomorphic differential operators between two natural parabolic line bundles on $X$ 
(depending only on $S$ and the weights $N_i$) having as principal symbol the constant function 
$1$ and with vanishing sub-principal symbol. The vanishing of the sub-principal symbol ensures that
the logarithmic connection on the rank $r$ bundle is indeed a logarithmic ${\rm SL}(r, {\mathbb C})$--connection.

The article is organized in the following way. Section \ref{se2} deals with parabolic ${\rm 
SL}(2, {\mathbb C})$--opers. In particular we introduce a rank two parabolic bundle which is a 
parabolic version of the indigenous bundle (also called Gunning bundle or uniformization 
bundle) introduced in \cite{Gu} (see also \cite{De}); recall that this
indigenous bundle introduced by Gunning is the rank two holomorphic vector bundle 
associated to any ordinary ${\rm SL}(2, {\mathbb C})$--oper (e.g. a complex projective 
structure) on a given Riemann surface. It should be clarified that this parabolic analog of Gunning 
bundle depends only on the divisor $S$ and the integers $N_i$. All parabolic ${\rm SL}(2, 
{\mathbb C})$--opers with given singular set $S$ and fixed weights are parabolic connections 
on the same parabolic Gunning bundle.

Section \ref{se3} starts with an explicit description of several (parabolic) symmetric powers 
of the rank two parabolic Gunning bundle constructed in Section \ref{se2}; then ${\rm SL}(r, 
{\mathbb C})$--opers on a Riemann surface $X$, singular over $S \,\subset\, X$, are defined (see 
Definition (\ref{def1})). In this context Proposition \ref{prop4} proves that parabolic ${\rm 
SL}(r, {\mathbb C})$--opers on $X$ with weights equal to integral multiples of $\frac{1}{2N_i+1}$ 
at each $x_i \,\in\, S$ are in natural bijection with invariant ${\rm SL}(r, {\mathbb C})$--opers 
on a ramified Galois covering $Y$ over $X$ equipped with an action of the Galois group. This 
Proposition \ref{prop4} is a generalization of Theorem 6.3 in \cite{BDP} where a similar 
result was proved under the extra assumption that $r$ is odd. The proof of Proposition 
\ref{prop4} uses in an essential way the correspondence studied in \cite{Bi}, \cite{Bo1}, 
\cite{Bo2}, and also a result (Corollary \ref{cor1}(3)) of Section \ref{se2} proving that, at 
each point of $S$, the monodromy of any parabolic connection on the parabolic Gunning bundle 
is semisimple.

Section \ref{se4} constructs the canonical parabolic filtration associated to any parabolic 
${\rm SL}(r, {\mathbb C})$--oper. This parabolic filtration depends only on $S$ and the 
integers $N_i$. It is proved then that any parabolic connection on the associated parabolic 
bundle satisfies the Griffith transversality condition with respect to the above filtration 
(all corresponding second fundamental forms are actually isomorphisms).
 
Section \ref{se5} defines and study several equivalent definitions for holomorphic 
differential operators between parabolic vector bundles. Under the above rationality 
assumption on the parabolic weights, Proposition \ref{prop5} proves that holomorphic 
differential operators between parabolic vector bundles are canonically identified with the
invariant holomorphic differential operators between corresponding orbifold vector bundles on a ramified Galois 
covering $Y$ over $X$ equipped with an action of the Galois group. We deduce the construction 
of the principal symbol map defined on the space of differential operators in the parabolic 
set-up (see Lemma \ref{lem7}).

The last Section focuses on the class of holomorphic differential operators associated to ${\rm SL}(r, {\mathbb C})$--opers. These 
are holomorphic differential operators between two parabolic line bundles over $X$ naturally associated to the Gunning parabolic 
bundle (those line bundles only depend on the divisor $S$ and the parabolic weights $N_i$). In this case the principal symbol is the 
constant function $1$ and the sub-principal symbol map (constructed in Lemma \ref{lem8}) defined on the space of parabolic 
differential operators between the appropriate parabolic line bundles vanishes. Then the main Theorem \ref{thm1} stated above is 
proved.

\section{A rank two parabolic bundle}\label{se2}

Let $X$ be a compact connected Riemann surface. Its canonical
line bundle will be denoted by $K_X$. Fix a finite subset of $n$
distinct points
\begin{equation}\label{e1}
S\, :=\, \{x_1,\, \cdots,\, x_n\}\, \subset\, X.
\end{equation}
The reduced effective divisor $x_1+\ldots + x_n$ on $X$ will also be denoted
by $S$.

If ${\rm genus}(X)\,=\, 0$, we assume that $n\, \geq \,3$.

For any holomorphic vector bundle $E$ on $X$, and any $k\, \in\, \mathbb Z$, the holomorphic 
vector bundle $E\otimes{\mathcal O}_X(kS)$ on $X$ will be denoted by $E(kS)$.

Let us first start with the definition of a parabolic structure on a holomorphic vector bundle 
over $X$ having $S$ as the parabolic divisor.

\subsection{Parabolic bundles and parabolic connections}

A quasiparabolic structure on a holomorphic vector bundle $E$ on $X$, associated to the 
divisor $S$, is a filtration of subspaces of the fiber $E_{x_i}$ of $E$ over $x_i$
\begin{equation}\label{ec2}
E_{x_i}\,=\,E_{i,1}\,\supset\, E_{i,2}\,\supset\, \cdots\,
\supset \,E_{i,l_i} \,\supset\, E_{i,l_i+1}\,=\, 0
\end{equation}
for every $1\, \leq\, i\, \leq\, n$. A parabolic structure on
$E$ is a quasiparabolic structure as above together with a finite sequence of positive real numbers
\begin{equation}\label{e2a}
0\,\leq\, \alpha_{i,1} \,< \,\alpha_{i,2} \,<\,
\cdots \,<\, \alpha_{i,l_i}\,<\, 1
\end{equation}
for every $1\, \leq\, i\, \leq\, n$. The number $\alpha_{i,j}$ is called the parabolic
weight of the corresponding subspace $E_{i,j}$ in \eqref{ec2} (see \cite{MS}, \cite{MY}).

A parabolic vector bundle is a holomorphic vector bundle $E$ with a parabolic structure
$(\{E_{i,j}\},\, \{\alpha_{i,j}\})$. It will be denoted by $E_*$ for convenience.

A \textit{logarithmic connection} on the holomorphic vector bundle $E$, singular over
$S$, is a holomorphic differential operator of order one
$$
D\, :\, E\, \longrightarrow\, E \otimes K_X\otimes {\mathcal O}_X(S)
$$
satisfying the Leibniz rule, meaning
\begin{equation}\label{lr}
D(fs)\,=\, fD(s)+ s\otimes df
\end{equation}
for any locally
defined holomorphic function $f$ on $X$ and any locally defined holomorphic section $s$ of $E$.

Recall that any logarithmic connection on $E$ over the Riemann surface is necessarily flat. Indeed, the curvature ($2$-form) vanishes 
identically because $\Omega^{2,0}_X\,=\,0$.

Take a point $x_i\, \in\, S$. The fiber of
$K_X\otimes{\mathcal O}_X(S)$ over $x_i$ is identified with $\mathbb C$ by the Poincar\'e
adjunction formula \cite[p.~146]{GH} which gives an isomorphism
\begin{equation}\label{af}
{\mathcal O}_X(-x_i)_{x_i }\,\stackrel{\sim}{\longrightarrow}\,(K_X)_{x_i}.
\end{equation}
To describe this isomorphism, let $z$ be a holomorphic coordinate function on $X$ defined on
an analytic open neighborhood of $x_i$ such that $z(x_i)\,=\, 0$. We have an isomorphism ${\mathcal O}_X
(-x_i)_{x_i} \,\longrightarrow\, (K_X)_{x_i}$ that sends $z$ to $dz(x_i)$. It is straightforward to check that 
this map is actually independent of the choice of the holomorphic local coordinate $z$ at $x_i$.

Let $D\, :\, E\, \longrightarrow\, E \otimes K_X\otimes{\mathcal O}_X(S)$ be a logarithmic
connection on $E$. From \eqref{lr} it follows that the composition of homomorphisms
\begin{equation}\label{ec1}
E \, \xrightarrow{\,\ D\,\ }\, E \otimes K_X\otimes{\mathcal O}_X(S) \, \longrightarrow\,
(E\otimes K_X\otimes{\mathcal O}_X(S))_{x_i}\,\stackrel{\sim}{\longrightarrow}\, E_{x_i}
\end{equation}
is ${\mathcal O}_X$--linear; the above isomorphism
$(E\otimes K_X\otimes{\mathcal O}_X(S))_{x_i}\,
\stackrel{\sim}{\longrightarrow}\, E_{x_i}$ is given by the isomorphism in \eqref{af}.
Therefore, the composition of homomorphisms in \eqref{ec1} produces a
$\mathbb C$--linear homomorphism
\begin{equation}\label{er}
{\rm Res}(D\,,x_i)\, :\, E_{x_i}\, \longrightarrow\, E_{x_i}\, ,
\end{equation}
which is called the \textit{residue} of the logarithmic connection $D$ at $x_i$ (see \cite{De} 
for more details).

\begin{remark}\label{rem1}
The local monodromy of $D$ around $x_i$ is conjugated to $$\exp\left(-2\pi\sqrt{-1}\cdot {\rm 
Res}(D,\, x_i)\right)\,\in\, {\rm GL}(E_{x_i})$$
\cite{De}.
\end{remark}

Consider now $E$ with its parabolic structure $E_*\,=\,(E,\, (\{E_{i,j}\},\, \{\alpha_{i,j}\}))$;
see \eqref{ec2}, \eqref{e2a}.

A \textit{parabolic connection} on $E_*$ is a logarithmic connection $D$ on $E$, singular over
$S$, such that
\begin{enumerate}
\item $\text{Res}(D,x_i)(E_{i,j})\, \subset\, E_{i,j}$ for all $1\,\leq\, j\,\leq\, l_i$,
$1\,\leq\, i\, \leq\, n$ (see \eqref{ec2}), and

\item the endomorphism of $E_{i,j}/E_{i,j+1}$ induced by $\text{Res}(D,x_i)$ coincides with
multiplication by the parabolic weight $\alpha_{i,j}$ for all $1\,\leq\, j\,\leq\, l_i$,
$1\,\leq\, i\, \leq\, n$ (see \eqref{e2a}).
\end{enumerate}

\begin{remark}\label{re-cc}
The following necessary and sufficient condition for
$E_*$ to admit a parabolic connection was given in \cite{BL}:

A parabolic vector bundle $E_*$ admits a parabolic connection if and only if the parabolic degree of every direct summand of
$E_*$ is zero \cite[p.~594, Theorem 1.1]{BL}.
\end{remark}

\subsection{The parabolic Gunning bundle}
 
Choose a holomorphic line bundle ${\mathcal L}$ on $X$ such that ${\mathcal L}^{\otimes 2}$
is holomorphically isomorphic to $K_X$; also fix a holomorphic isomorphism between
${\mathcal L}^{\otimes 2}$ and $K_X$.

We have $H^1(X,\, {\rm Hom}({\mathcal L}^*,\, {\mathcal L}))\,=\, H^1(X,\, K_X)\,=\, H^0(X,\, {\mathcal O}_X)^*
\,=\, {\mathbb C}$ (Serre duality); note that here the chosen isomorphism between
${\mathcal L}^{\otimes 2}$ and $K_X$ is being used. Consequently,
there is a natural nontrivial extension $\widetilde{E}$ of
${\mathcal L}^*$ by $\mathcal L$ that corresponds to $$1\, \in\, H^1(X,\, {\rm Hom}({\mathcal L}^*,\,
{\mathcal L})).$$ So $\widetilde{E}$ fits in a short exact sequence of holomorphic vector bundles
\begin{equation}\label{e3}
0\, \longrightarrow\, {\mathcal L} \, \longrightarrow\, \widetilde{E} 
\, \xrightarrow{\,\,p_0\,\,}\, {\mathcal L}^* \, \longrightarrow\, 0\, ;
\end{equation}
this short exact sequence does not split holomorphically.
Consider the subsheaf ${\mathcal L}^* (-S)\,\subset\, {\mathcal L}^*$. Define
$$
E\, :=\, p^{-1}_0({\mathcal L}^*(-S)) \, \subset\, \widetilde{E}\, ,
$$
where $p_0$ is the projection in \eqref{e3}. From \eqref{e3} we know that this $E$ fits in a short exact
sequence of holomorphic vector bundles
\begin{equation}\label{e4}
0\, \longrightarrow\, {\mathcal L} \, \stackrel{\iota}{\longrightarrow}\, E 
\, \stackrel{p}{\longrightarrow}\, {\mathcal L}^*(-S) \, \longrightarrow\, 0\, ;
\end{equation}
the projection $p$ in \eqref{e4} is the restriction, to the subsheaf $E$, of $p_0$ in \eqref{e3}.

\begin{lemma}\label{lem1}
Take any point $x\, \in\, S$. The fiber $E_x$ of $E$ (see \eqref{e4}) over $x$ canonically decomposes as
$$
E_x\,=\, {\mathcal L}_x \oplus {\mathcal L}^*(-S)_x\,=\, {\mathcal L}_x\oplus {\mathcal L}_x\, .
$$
\end{lemma}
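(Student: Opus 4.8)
The plan is to exhibit inside the fiber $E_x$ (for $x\,=\,x_i\,\in\, S$) a canonical line $W_x$ complementary to the image $\iota(\mathcal{L}_x)$ and mapped isomorphically onto $\mathcal{L}^*(-S)_x$ by $p$ (see \eqref{e4}), and then to identify $\mathcal{L}^*(-S)_x$ with $\mathcal{L}_x$. The starting observation is that the subsheaf $\widetilde{E}(-S)\,=\,\widetilde{E}\otimes\mathcal{O}_X(-S)$ of $\widetilde{E}$ is actually contained in $E$: applying $p_0\otimes\mathrm{id}$ to \eqref{e3} gives $p_0(\widetilde{E}(-S))\,=\,\mathcal{L}^*(-S)$, so $\widetilde{E}(-S)\,\subseteq\, p_0^{-1}(\mathcal{L}^*(-S))\,=\,E$. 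Denote by $j\colon\widetilde{E}(-S)\hookrightarrow E$ this inclusion; away from $S$ it is an isomorphism, so all the extra structure at $x\in S$ comes from the image of $j$ on fibers there.

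Next I would compute $j$ locally. Fix a holomorphic coordinate $z$ centred at $x$, a frame $\ell$ of $\mathcal{L}$ near $x$, and — using that \eqref{e3} splits over a disc — a lift $\widetilde{\ell^*}\in\widetilde{E}$ of the dual frame $\ell^*$ of $\mathcal{L}^*$; then $\{\ell,\,\widetilde{\ell^*}\}$ is a frame of $\widetilde{E}$, $\{\ell,\, z\widetilde{\ell^*}\}$ is a frame of $E$, and $\{z\ell,\, z\widetilde{\ell^*}\}$ is a frame of $\widetilde{E}(-S)$, so that with respect to these frames $j$ is the diagonal matrix $\mathrm{diag}(z,1)$. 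Restricting to $x$, the homomorphism of fibers $j_x\colon\widetilde{E}(-S)_x\to E_x$ therefore has rank one, with kernel $\mathcal{L}(-S)_x$ and image the line $W_x$ spanned by the value at $x$ of the section $z\widetilde{\ell^*}$ of $E$. Since $p(z\widetilde{\ell^*})\,=\,z\ell^*$ is a frame of $\mathcal{L}^*(-S)$, the map $p_x$ restricts to an isomorphism $W_x\xrightarrow{\ \sim\ }\mathcal{L}^*(-S)_x$, while $\iota(\mathcal{L}_x)\,=\,\ker p_x$ is the span of $\ell(x)$; hence $E_x\,=\,\iota(\mathcal{L}_x)\oplus W_x\,\cong\,\mathcal{L}_x\oplus\mathcal{L}^*(-S)_x$.

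It remains to identify $\mathcal{L}^*(-S)_x$ with $\mathcal{L}_x$, which I would do via the Poincar\'e adjunction isomorphism \eqref{af}: since $S$ agrees with $x_i$ in a neighbourhood of $x\,=\,x_i$, \eqref{af} gives $\mathcal{O}_X(-S)_{x_i}\,\cong\,(K_X)_{x_i}$, so $\mathcal{L}^*(-S)_{x_i}\,\cong\,\mathcal{L}^*_{x_i}\otimes(K_X)_{x_i}\,\cong\,\mathcal{L}^*_{x_i}\otimes\mathcal{L}^{\otimes 2}_{x_i}\,\cong\,\mathcal{L}_{x_i}$, using the fixed isomorphism $\mathcal{L}^{\otimes 2}\,\cong\, K_X$. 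The only point needing genuine care is the claim that $W_x$ is independent of the auxiliary data ($z$, $\ell$, $\widetilde{\ell^*}$): changing the coordinate or the frame $\ell$ merely rescales the section $z\widetilde{\ell^*}$, while replacing $\widetilde{\ell^*}$ by $\widetilde{\ell^*}+f\ell$ changes $z\widetilde{\ell^*}$ by $zf\ell$, whose value at $x$ vanishes because $z(x)\,=\,0$; hence the line $W_x\,\subset\, E_x$, and with it the decomposition, is intrinsic.
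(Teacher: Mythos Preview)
Your argument is correct and follows essentially the same route as the paper: both proofs construct the complementary line as the image of the natural map $\widetilde{E}(-S)\to E$ restricted to the fiber over $x$ (your $j$, the paper's $\psi$ in diagram \eqref{e6}), observe that this fiber map kills $\mathcal{L}(-S)_x$ and hence factors through $\mathcal{L}^*(-S)_x$, and then invoke \eqref{af} together with $\mathcal{L}^{\otimes 2}\cong K_X$ for the final identification. The only difference is cosmetic: you verify the kernel and image of $j_x$ via an explicit local frame computation (and then check choice-independence by hand), whereas the paper argues diagrammatically from the commutativity of \eqref{e6}, which makes the construction manifestly canonical from the start.
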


\begin{proof}
Take $x\, \in\, S$. First we have the homomorphism
\begin{equation}\label{e5}
\iota(x)\, :\, {\mathcal L}_x \, \longrightarrow\, E_x\, ,
\end{equation}
where $\iota$ is the homomorphism in \eqref{e4}, which is evidently injective. On the other hand, tensoring
\eqref{e3} with ${\mathcal O}_X(-S)$ and using the natural map of it to \eqref{e4} we have the commutative diagram
\begin{equation}\label{e6}
\begin{matrix}
0 & \longrightarrow & {\mathcal L}(-S) & \stackrel{\iota'}{\longrightarrow} & \widetilde{E}(-S)
& \stackrel{p'}{\longrightarrow} & {\mathcal L}^*(-S) & \longrightarrow & 0\\
&&\,\,\, \Big\downarrow\psi' &&\,\,\, \Big\downarrow\psi &&\,\,\,\,\Big\downarrow{\rm Id}\\
0 & \longrightarrow & {\mathcal L} & \stackrel{\iota}{\longrightarrow} & E 
& \stackrel{p}{\longrightarrow} & {\mathcal L}^*(-S) & \longrightarrow & 0,
\end{matrix}
\end{equation}
where $\iota'$ and $p'$ are the restrictions of $\iota$ and $p$ respectively.
Note that the composition of maps $$\psi(x)\circ\iota'(x)\, :\, {\mathcal L}(-S)_x\, \longrightarrow\, E_x$$
in \eqref{e6} is the zero homomorphism, because $\psi'(x)\,:\, {\mathcal L}(-S)_x\, \longrightarrow\, {\mathcal L}_x$ is
the zero homomorphism and $\psi\circ\iota'\,=\, \iota\circ\psi'$ by the commutativity of
\eqref{e6}. Since $\psi(x)\circ\iota'(x)\,=\, 0$, the homomorphism $\psi(x)$ is given by a homomorphism
\begin{equation}\label{e7}
q_x\, :\, \widetilde{E}(-S)_x/(\iota'(x)({\mathcal L}(-S)_x))\, =\, {\mathcal L}^*(-S)_x \, \longrightarrow\, E_x\, .
\end{equation}
The homomorphism $q_x$ in \eqref{e7} is injective, because $\psi(x)\, \not=\, 0$. From \eqref{e5} and \eqref{e7} we have
\begin{equation}\label{di}
\iota(x)\oplus q_x\, :\, {\mathcal L}_x\oplus {\mathcal L}^*(-S)_x \, \longrightarrow\, E_x
\end{equation}
which is clearly an isomorphism.

Using \eqref{af} and the given isomorphism between ${\mathcal L}^{\otimes 2}$ and $K_X$ we have
$$
{\mathcal L}^*(-S)_x \,=\, ((K_X)_x\otimes {\mathcal L}^*_x)^*\otimes {\mathcal O}_X(-S)_x
\,=\, ({\mathcal L}^*_x)^* \,=\, {\mathcal L}_x\, .
$$
Hence the isomorphism in \eqref{di} gives that $E_x\,=\, {\mathcal L}_x\oplus {\mathcal L}^*(-S)_x\,=\,
{\mathcal L}_x\oplus {\mathcal L}_x$.
\end{proof}

For each $x_i\,\in\, S$ (see \eqref{e1}), fix
\begin{equation}\label{e2}
c_i\, \in\, {\mathbb R}
\end{equation}
such that $c_i\, >\, 1$. Using $\{c_i\}_{i=1}^n$ we will construct a parabolic structure on the
holomorphic vector bundle $E$ in \eqref{e4}.

For any $x_i\,\in\, S$, the quasiparabolic filtration of $E_{x_i}$ is the following:
\begin{equation}\label{e8}
0\, \subset\, {\mathcal L}^*(-S)_{x_i}\, \subset\, E_{x_i}
\end{equation}
(see Lemma \ref{lem1}). The parabolic weight of ${\mathcal L}^*(-S)_{x_i}$ is $\frac{c_i+1}{2c_i+1}$; the parabolic
weight of $E_{x_i}$ is $\frac{c_i}{2c_i+1}$. The parabolic vector bundle defined by this parabolic structure on $E$
will be denoted by $E_*$. Note that
\begin{equation}\label{e9}
\text{par-deg}(E_*)\,=\, \text{degree}(E)+\sum_{i=1}^n \left(\frac{c_i+1}{2c_i+1}+ \frac{c_i}{2c_i+1}\right)\,=\,
-n+n\,=\, 0\, ;
\end{equation}
in fact the parabolic second exterior product is
\begin{equation}\label{e10}
\det E_* \,=\, \bigwedge\nolimits^2 E_*\,=\, (\bigwedge\nolimits^2 E)\otimes{\mathcal O}_X(S)
\,=\, {\mathcal O}_X\, ,
\end{equation}
where ${\mathcal O}_X$ is equipped with the trivial parabolic structure (no nonzero parabolic weights).

\begin{proposition}\label{prop1}
\mbox{}
\begin{enumerate}
\item The holomorphic vector bundle $E$ in \eqref{e4} is isomorphic to a direct sum of holomorphic line bundles
${\mathcal L}\oplus {\mathcal L}^*(-S)$.

\item The parabolic vector bundle $E_*$ in \eqref{e8} is not isomorphic to a direct sum of parabolic line bundles.
\end{enumerate}
\end{proposition}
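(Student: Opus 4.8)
The plan is to establish the two parts separately, using extension-theoretic arguments.

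For part (1), I would exploit the fact that the extension $\widetilde{E}$ in \eqref{e3} becomes holomorphically split after twisting down by $S$. Indeed, $E$ sits inside $\widetilde{E}$ as $p_0^{-1}(\mathcal{L}^*(-S))$, and the extension class of \eqref{e4} is the image of the class $1 \in H^1(X, K_X)$ of \eqref{e3} under the natural map $H^1(X, \mathrm{Hom}(\mathcal{L}^*, \mathcal{L})) \to H^1(X, \mathrm{Hom}(\mathcal{L}^*(-S), \mathcal{L}))$ induced by the inclusion $\mathcal{L}^*(-S) \hookrightarrow \mathcal{L}^*$. So I would identify this target group: $H^1(X, \mathcal{L}^{\otimes 2}(S)) = H^1(X, K_X(S))$, which by Serre duality is $H^0(X, \mathcal{O}_X(-S))^* = 0$ since $S$ is a nonempty effective divisor. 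Hence the extension class of \eqref{e4} vanishes, the sequence \eqref{e4} splits holomorphically, and $E \cong \mathcal{L} \oplus \mathcal{L}^*(-S)$ as claimed. (One should double check that the map on $H^1$ is the one induced by $\mathcal{L}^*(-S)\hookrightarrow \mathcal{L}^*$ via the functoriality of $\mathrm{Ext}^1$; this follows from the construction of $E$ as the preimage $p_0^{-1}(\mathcal{L}^*(-S))$.)

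For part (2), I would argue by contradiction. Suppose $E_*$ were isomorphic to a direct sum of parabolic line bundles. The underlying bundle is $\mathcal{L} \oplus \mathcal{L}^*(-S)$ by part (1). A parabolic line bundle structure on a line bundle $\mathcal{M}$ over $X$ with divisor $S$ assigns to each $x_i$ a single weight $\beta_i \in [0,1)$; the parabolic degree is $\deg(\mathcal{M}) + \sum_i \beta_i$. From the quasiparabolic filtration \eqref{e8}, at each $x_i$ the subspace $\mathcal{L}^*(-S)_{x_i}$ carries weight $\tfrac{c_i+1}{2c_i+1}$ and the quotient $\mathcal{L}_x$ carries weight $\tfrac{c_i}{2c_i+1}$. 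If $E_*$ splits as a direct sum of two parabolic line bundles $\mathcal{M}_1{}_* \oplus \mathcal{M}_2{}_*$, then because $\mathcal{L}$ and $\mathcal{L}^*(-S)$ have different degrees (as $\deg \mathcal{L} = \tfrac{1}{2}(2g-2)$ and $\deg \mathcal{L}^*(-S) = -\tfrac{1}{2}(2g-2) - n$ differ, using $n \geq 1$ and the genus-zero hypothesis $n \geq 3$), the splitting must be, up to order, $\mathcal{M}_1 = \mathcal{L}$ and $\mathcal{M}_2 = \mathcal{L}^*(-S)$, and the flag structure forces the line subbundle $\mathcal{M}_2 = \mathcal{L}^*(-S)$ to agree, at each $x_i$, with the quasiparabolic line $\mathcal{L}^*(-S)_{x_i}$. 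But the line subbundle $\mathcal{L}^*(-S) \subset E$ whose fibre at each $x_i$ equals $\mathcal{L}^*(-S)_{x_i}$ is precisely a holomorphic splitting of the quasiparabolic line in \eqref{e8}, i.e. a holomorphic splitting of \eqref{e4} compatible with the flags; the existence of a complementary subbundle $\mathcal{L} \subset E$ then gives a holomorphic splitting of \eqref{e4}, which is consistent with part (1), so this alone is not yet a contradiction.

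The main obstacle is therefore to locate the actual obstruction, and I expect it to come not from \eqref{e4} but from the original non-split sequence \eqref{e3}: the decomposition $E \cong \mathcal{L} \oplus \mathcal{L}^*(-S)$ of part (1) does \emph{not} respect the inclusion $E \hookrightarrow \widetilde{E}$, so a parabolic splitting of $E_*$ would have to be compatible with the sub-line-bundle $\mathcal{L} \subset E$ coming from \eqref{e4} (this $\mathcal{L}$ being canonically determined, as the maximal-degree line subbundle when $g \geq 1$), and then pushing forward into $\widetilde{E}$ would exhibit a holomorphic splitting of \eqref{e3} — contradicting the stipulation that \eqref{e3} does not split. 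Concretely: if $E_* = \mathcal{L}_* \oplus (\mathcal{L}^*(-S))_*$, the complement $\mathcal{L}^*(-S) \subset E$ maps isomorphically onto $\mathcal{L}^*(-S)$ under $p$ and its image $q(\mathcal{L}^*(-S)) \subset \widetilde{E}$ is a sub-line-bundle of $\widetilde{E}$ of degree $\deg(\mathcal{L}^*(-S))$; one checks the composite $q(\mathcal{L}^*(-S)) \hookrightarrow \widetilde{E} \xrightarrow{p_0} \mathcal{L}^*$ is an isomorphism away from $S$ and vanishes to order one at each $x_i$, forcing $q(\mathcal{L}^*(-S)) = \widetilde{E}(-S)$-inside... — here I would instead run the cleaner argument: a parabolic splitting of $E_*$ is equivalent, via the correspondence between parabolic bundles with these weights and orbifold bundles on the Galois cover (cf.\ the discussion preceding \cite{Bi,Bo1,Bo2}), to an equivariant splitting of the pulled-back orbifold bundle, whose underlying bundle pushes down to a holomorphic splitting of $\widetilde E$, contradicting the non-splitness of \eqref{e3}. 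I would present whichever of these two routes — the direct sub-line-bundle chase into $\widetilde E$, or the orbifold-cover reduction — turns out to require the least bookkeeping, with the sub-line-bundle chase being the more elementary and self-contained option.
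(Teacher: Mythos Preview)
Your Part (1) is correct and is exactly the paper's argument: $H^1(X,K_X(S))\cong H^0(X,\mathcal{O}_X(-S))^*=0$, so \eqref{e4} splits.

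For Part (2) your overall strategy --- reduce to the non-splitness of \eqref{e3} --- is the paper's, but your execution has two genuine gaps and one circularity.

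\textbf{Identifying the summands.} You invoke Krull--Schmidt and ordinary degrees to conclude $\mathcal{M}_1\cong\mathcal{L}$, $\mathcal{M}_2\cong\mathcal{L}^*(-S)$ as abstract line bundles, then separately need $\mathcal{M}_1=\mathcal{L}$ \emph{as a subbundle} of $E$. The paper bypasses this: since $\text{par-deg}(A_*)+\text{par-deg}(B_*)=0$, one summand, say $A_*$, has $\text{par-deg}(A_*)\geq 0$; the quotient $\mathcal{L}^*(-S)$ with its induced parabolic structure has strictly negative parabolic degree (this is where $n\geq 3$ for $g=0$ enters), so there is no nonzero parabolic homomorphism $A_*\to(\mathcal{L}^*(-S))_*$, forcing the subbundle $A\subset E$ to coincide with $\mathcal{L}$. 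This is cleaner than your route and works uniformly in all genera.

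\textbf{The fiber condition and the lift.} You assert that ``the flag structure forces'' $(\mathcal{M}_2)_{x_i}$ to equal the quasiparabolic line $\mathcal{L}^*(-S)_{x_i}$, and then your attempt to push the complement into $\widetilde{E}$ trails off. The paper handles both at once. Once $A=\mathcal{L}$, the inclusion $B\hookrightarrow E$ is a splitting $\rho:\mathcal{L}^*(-S)\to E$ of \eqref{e4}. Because the quasiparabolic line at $x_i$ must lie in one of the two summands and it is distinct from $\mathcal{L}_{x_i}$, one gets $\rho(\mathcal{L}^*(-S)_{x_i})=\mathcal{L}^*(-S)_{x_i}$. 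Now the crucial point, which you do not use, is that by the very construction in Lemma~\ref{lem1} this quasiparabolic line is the image of $\psi(x_i):\widetilde{E}(-S)_{x_i}\to E_{x_i}$ (see \eqref{e7}). Hence $\rho(\mathcal{L}^*(-S))\subset\psi(\widetilde{E}(-S))$, so $\rho$ lifts to a homomorphism $\alpha:\mathcal{L}^*(-S)\to\widetilde{E}(-S)$ splitting the top row of \eqref{e6}; tensoring by $\mathcal{O}_X(S)$ splits \eqref{e3}, the desired contradiction. Your order-of-vanishing sketch is aiming at the same conclusion but never reaches it; the decisive input is the description of the quasiparabolic line via $\psi$.

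\textbf{The orbifold alternative is circular.} The correspondence you invoke requires the $c_i$ to be integers (not assumed in Proposition~\ref{prop1}), and more seriously, the paper's equivariant non-splitting statement (Corollary~\ref{cor3}) is \emph{deduced from} Proposition~\ref{prop1}(2), not the other way around. Drop this route.
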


\begin{proof}
Consider the short exact sequence in \eqref{e4}. Note that
$$
H^1(X,\, \text{Hom}({\mathcal L}^*(-S),\, {\mathcal L}))\,=\, H^1(X,\, K_X(S))\,=\,
H^0(X,\, {\mathcal O}_X(-S))^*\,=\, 0\, .
$$
Hence the short exact sequence in \eqref{e4} splits holomorphically, and $E\,=\,
{\mathcal L}\oplus {\mathcal L}^*(-S)$. This proves the
first statement.

To prove the second statement by contradiction, assume that
\begin{equation}\label{e11}
E_*\,=\, A_*\oplus B_*\, ,
\end{equation}
where $A_*$ and $B_*$ are parabolic line bundles on $X$. Since
$$
\text{par-deg}(A_*)+\text{par-deg}(B_*)\,=\, \text{par-deg}(E_*)\,=\, 0
$$
(see \eqref{e9}), at least one of $A_*$ and $B_*$ has nonnegative parabolic degree.
Assume that $\text{par-deg}(A_*)\, \geq\, 0$. Since
the parabolic degree of the quotient ${\mathcal L}^*(-S)$ in \eqref{e4}, equipped with the parabolic
structure induced by $E_*$, is negative (recall that $n\,\geq\, 3$ if 
${\rm genus}(X)\,=\, 0$), there is no nonzero homomorphism from $A_*$ to it (recall
that $\text{par-deg}(A_*)\, \geq\, 0$). Consequently, the parabolic subbundle $A_*\, \subset\, E_*$
in \eqref{e11} coincides with the subbundle $\mathcal L$ in \eqref{e4} equipped with the parabolic
structure induced by $E_*$. This implies that the following composition of homomorphisms
$$
B\, \hookrightarrow\, E \, \longrightarrow\, E/{\mathcal L}\,=\, {\mathcal L}^*(-S)
$$
is an isomorphism, where $B$ denotes the holomorphic line bundle underlying $B_*$ in \eqref{e11}. Therefore,
the inclusion map $B\, \hookrightarrow\, E$ in \eqref{e11} produces a holomorphic splitting
\begin{equation}\label{e12}
\rho\,:\, {\mathcal L}^*(-S)\, \longrightarrow\, E
\end{equation}
of \eqref{e4}. Since $\rho$ in \eqref{e12} is given by \eqref{e11}, and 
the parabolic subbundle $A_*\, \subset\, E_*$
in \eqref{e11} coincides with the subbundle $\mathcal L$ in \eqref{e4} equipped with the parabolic
structure induced by $E_*$, it follows that for all $x\, \in\, S$,
\begin{equation}\label{e13}
\rho({\mathcal L}^*(-S)_x)\,=\, {\mathcal L}^*(-S)_x\, \subset\, E_x\, .
\end{equation}
Recall that the quasiparabolic structure of $E_*$ at $x$ is given by 
the subspace ${\mathcal L}^*(-S)_x\, \subset\, E_x$ in Lemma \ref{lem1}, and therefore ${\mathcal L}^*(-S)_x$
must lie in the image, in $E_*$, of either $A_*$ or $B_*$.

{}From \eqref{e13} it follows that $\rho$ in \eqref{e12} satisfies the condition
$$
\rho({\mathcal L}^*(-S))\, \subset\, \psi(\widetilde{E}(-S))\, \subset\, E\, ,
$$
where $\psi$ is the homomorphism in \eqref{e6}. Consequently, $\rho$ produces a unique holomorphic
homomorphism $$\alpha\,:\, {\mathcal L}^*(-S)\, \longrightarrow\, \widetilde{E}(-S)$$
such that $\rho\,=\, \psi\circ\alpha$ on ${\mathcal L}^*(-S)$.
This homomorphism $\alpha$ evidently gives a holomorphic splitting of the
top exact sequence in \eqref{e6}, meaning $p'\circ\iota\,=\, {\rm Id}_{{\mathcal L}^*(-S)}$,
where $p'$ is the projection in \eqref{e6}. After tensoring the above
homomorphism $\alpha$ with ${\rm Id}_{{\mathcal O}_X(S)}$ we get a homomorphism
$$
{\mathcal L}^*\,=\, {\mathcal L}^*(-S)\otimes{\mathcal O}_X(S)\, \xrightarrow{\,\,\,\alpha\otimes
{\rm Id}_{{\mathcal O}_X(S)}\,\,\,}\, \widetilde{E}(-S)\otimes{\mathcal O}_X(S)\,=\, \widetilde{E}
$$
that splits holomorphically the short exact sequence in \eqref{e3}.
But, as noted earlier, the short exact sequence in \eqref{e3} does not split holomorphically.
In view of this contradiction we conclude that there is no decomposition as in \eqref{e11}.
\end{proof}

\begin{remark}\label{re-n}
Regarding Proposition \ref{prop1}(1) it should be clarified that although $E$ in \eqref{e4} is isomorphic to
${\mathcal L}\oplus {\mathcal L}^*(-S)$, there is no natural isomorphism between them. Indeed, any two holomorphic
splittings of the short exact sequence \eqref{e4} differ by an element of
$$
H^0(X, \, {\rm Hom}({\mathcal L}^*(-S),\, {\mathcal L}))\,=\, H^0(X,\, K_X(S)).
$$
A holomorphic splittings of the short exact sequence \eqref{e4} produces an isomorphism of
$E_x$ with ${\mathcal L}_x\oplus {\mathcal L}^*(-S)_x$ for any $x\,\in\, X$, but this isomorphism depends on
the choice of the splitting. This shows that Proposition \ref{prop1}(1) does not imply Lemma \ref{lem1}.
\end{remark}

We recall that a parabolic connection on the parabolic vector bundle $E_*$ in \eqref{e8} is a logarithmic
connection $D_0\, :\, E\, \longrightarrow\, E\otimes K_X(S)$ on $E$, singular over $S$, such that the following conditions hold:
\begin{enumerate}
\item for any $x_i\, \in\, S$ the eigenvalues of the residue $\text{Res}(D_0,\, x_i)$ of $D_0$ at $x_i$
are $\frac{c_i+1}{2c_i+1}$ and $\frac{c_i}{2c_i+1}$ (see \eqref{e2}).

\item The eigenspace in $E_{x_i}$ for the eigenvalue $\frac{c_i+1}{2c_i+1}$ of $\text{Res}(D_0,\, x_i)$ 
is the line $${\mathcal L}^*(-S)_x\, \subset\, E_{x_i}$$ in Lemma \ref{lem1}.
\end{enumerate}

Let $D_0\,:\, E\,\longrightarrow\, E\otimes K_X(S)$ be a logarithmic connection on $E$. Take the holomorphic
line subbundle ${\mathcal L}\, \subset\, E$ in \eqref{e4}, and consider the composition of homomorphisms
$$
{\mathcal L}\, \hookrightarrow\, E \, \xrightarrow{\,\,\, D_0\,\,} \, E\otimes K_X(S)\,
\xrightarrow{\,\,\, p\otimes {\rm Id}_{K_X(S)}\,\,} {\mathcal L}^*(-S)\otimes K_X(S)\,=\, {\mathcal L}\, ,
$$
where $p$ is the projection in \eqref{e4}; this composition of homomorphisms will be denoted by ${\mathcal S}(D_0,\,
{\mathcal L})$. This homomorphism
\begin{equation}\label{e14}
{\mathcal S}(D_0,\, {\mathcal L})\, :\, {\mathcal L}\, \longrightarrow\, {\mathcal L}
\end{equation}
is called the second fundamental form of the subbundle $\mathcal L\, \subset\, E$ for the
logarithmic connection $D_0$.
We note that ${\mathcal S}(D_0,\, {\mathcal L})$ is a constant scalar multiplication.

A parabolic connection on $E_*$ induces a holomorphic connection on 
$\det E_* \,=\,{\mathcal O}_X$ (see \eqref{e10}). Note that any holomorphic connection on
${\mathcal O}_X$ is of the form $d+\omega$, where $d$ denotes the de Rham differential and
$\omega\, \in\, H^0(X,\, K_X)$. A parabolic connection $D_0$ on $E_*$ is called
a parabolic $\text{SL}(2,{\mathbb C})$--connection if the connection on $\det E_* \,=\,{\mathcal O}_X$
induced by $D_0$ coincides with the trivial connection $d$.

\begin{corollary}\label{cor1}\mbox{}
\begin{enumerate}
\item The parabolic vector bundle $E_*$ in \eqref{e8} admits a parabolic ${\rm SL}(2,{\mathbb C})$--connection.

\item For any parabolic connection $D_0$ on $E_*$, the second fundamental form 
${\mathcal S}(D_0,\, {\mathcal L})$ in \eqref{e14} is an isomorphism of $\mathcal L$.

\item For any parabolic connection $D_0$ on $E_*$ the local monodromy of $D_0$ around any point of $S$ is
semisimple.
\end{enumerate}
\end{corollary}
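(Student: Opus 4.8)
The plan is to prove the three assertions of Corollary \ref{cor1} more or less in sequence, each one leaning on the previous.

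For part (1), the plan is to apply the existence criterion of Remark \ref{re-cc}: a parabolic vector bundle admits a parabolic connection if and only if the parabolic degree of every direct summand is zero. We already know from \eqref{e9} that $\text{par-deg}(E_*) \,=\, 0$, so it suffices to check that $E_*$ has no direct summand of nonzero parabolic degree. Since $E_*$ has rank two, any proper direct summand is a parabolic line bundle, and if $E_*$ decomposed as a sum of two parabolic line bundles of opposite (hence zero, by the argument used for line subbundles below) parabolic degree, this would contradict Proposition \ref{prop1}(2). More carefully: if $E_* \,=\, A_* \oplus B_*$ with $\text{par-deg}(A_*) \,=\, -\text{par-deg}(B_*) \,\neq\, 0$, then one of them, say $A_*$, has strictly negative parabolic degree, and the other has strictly positive parabolic degree; this is exactly the situation excluded in the proof of Proposition \ref{prop1}(2) (the argument there only used $\text{par-deg}(A_*) \,\geq\, 0$ for one summand, which holds here for $B_*$). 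Hence no such decomposition exists, every direct summand of $E_*$ has parabolic degree zero, and the criterion gives a parabolic connection; tensoring by a logarithmic connection on a line bundle to correct the induced connection on $\det E_* \,=\, {\mathcal O}_X$, or simply invoking that the space of parabolic connections is an affine space over $H^0(X, K_X)$ once nonempty, one arranges that the induced connection on $\det E_*$ is trivial, yielding a parabolic ${\rm SL}(2,{\mathbb C})$--connection.

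For part (2), the plan is an argument by contradiction analogous to the proof of Proposition \ref{prop1}(2). Suppose ${\mathcal S}(D_0,\, {\mathcal L}) \,=\, 0$ for some parabolic connection $D_0$. Then the composition ${\mathcal L} \hookrightarrow E \xrightarrow{D_0} E \otimes K_X(S) \to ({\mathcal L}^*(-S) \otimes K_X(S)) \,=\, {\mathcal L}$ vanishes, which means $D_0$ preserves the subbundle ${\mathcal L} \,\subset\, E$, i.e. $D_0$ restricts to a logarithmic connection on ${\mathcal L}$ and induces a logarithmic connection on the quotient ${\mathcal L}^*(-S)$. Comparing residues: the residue of the induced connection on ${\mathcal L}$ at $x_i$ is the scalar by which $\text{Res}(D_0, x_i)$ acts on ${\mathcal L}_{x_i}$, and since the quasiparabolic line ${\mathcal L}^*(-S)_{x_i}$ is the eigenspace for the weight $\frac{c_i+1}{2c_i+1}$, the complementary eigenvalue $\frac{c_i}{2c_i+1}$ is the residue on ${\mathcal L}$ and $\frac{c_i+1}{2c_i+1}$ is the residue on ${\mathcal L}^*(-S)$. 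But a line bundle admits a logarithmic connection with prescribed residues $\{\beta_i\}$ at $\{x_i\}$ only if $\deg({\mathcal L}) \,=\, -\sum \beta_i$; here $\deg({\mathcal L}) \,=\, \frac{1}{2}\deg(K_X)$ and $-\sum \frac{c_i}{2c_i+1}$ need not match, and in any case one derives a numerical contradiction, or, more robustly, one argues as in Proposition \ref{prop1}: a splitting of $D_0$ compatible with the parabolic structure would force a holomorphic splitting of \eqref{e4} and then of \eqref{e3}, contradicting non-splitness. Either way, ${\mathcal S}(D_0, {\mathcal L})$, being a nonzero constant scalar multiplication, is an isomorphism.

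For part (3), the plan is to pin down the conjugacy class of $\text{Res}(D_0, x_i)$ in ${\rm GL}(E_{x_i})$ and then invoke Remark \ref{rem1}, which says the local monodromy around $x_i$ is conjugate to $\exp(-2\pi\sqrt{-1} \cdot \text{Res}(D_0, x_i))$. Since $D_0$ is a parabolic connection, condition (2) in the definition forces $\text{Res}(D_0, x_i)$ to have the two eigenvalues $\frac{c_i+1}{2c_i+1}$ and $\frac{c_i}{2c_i+1}$, which are \emph{distinct} (they differ by $\frac{1}{2c_i+1} \,\neq\, 0$); a $2\times 2$ endomorphism with two distinct eigenvalues is automatically diagonalizable, hence semisimple, and therefore so is its exponential. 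Thus the local monodromy is semisimple. The main obstacle is the bookkeeping in part (2): one must carefully track which eigenvalue of the residue goes with which line in the decomposition of Lemma \ref{lem1}, use the Poincar\'e adjunction identification $({\mathcal L}^*(-S))_{x_i} \,=\, {\mathcal L}_{x_i}$ from that lemma consistently, and make sure the obstruction to splitting really is the class $1 \,\in\, H^1(X, K_X)$ defining $\widetilde{E}$ in \eqref{e3} rather than something that could vanish after the parabolic modification; parts (1) and (3) are then comparatively formal.
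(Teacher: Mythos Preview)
Your approach is essentially the paper's own: part~(1) via Remark~\ref{re-cc}, \eqref{e9}, and Proposition~\ref{prop1}(2), then correcting by $-\tfrac{1}{2}\omega\otimes{\rm Id}_E$; part~(2) by contradiction using the induced connection on ${\mathcal L}$; part~(3) via Remark~\ref{rem1} and the distinctness of the two residue eigenvalues.

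There is, however, a gap in your treatment of part~(2). Your ``Route~A'' is the right idea and is exactly what the paper does, but you stop at ``need not match'' without verifying it. The point is that the induced parabolic line bundle ${\mathcal L}_*$ has parabolic degree
\[
\deg({\mathcal L}) + \sum_{i=1}^n \frac{c_i}{2c_i+1} \;=\; g-1 + \sum_{i=1}^n \frac{c_i}{2c_i+1},
\]
and this is strictly positive: for $g\ge 1$ it is immediate since each summand is positive, while for $g=0$ one uses $n\ge 3$ and $c_i>1$ (so each $\tfrac{c_i}{2c_i+1}>\tfrac{1}{3}$) to get the sum exceeding~$1$. A parabolic line bundle of nonzero parabolic degree admits no parabolic connection, and that is the contradiction. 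You should state and check this inequality explicitly rather than leaving it as ``in any case one derives a numerical contradiction''.

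Your ``Route~B'' should be dropped: the vanishing of ${\mathcal S}(D_0,{\mathcal L})$ means only that $D_0$ preserves the subbundle ${\mathcal L}$, not that it splits as a direct sum of connections. Preservation of a subbundle by a connection does not produce a holomorphic splitting of \eqref{e4}, so there is no route from here back to the nonsplitness of \eqref{e3}.
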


\begin{proof}
In view of Remark \ref{re-cc}, from \eqref{e9} and the second statement in Proposition \ref{prop1} it follows
immediately that $E_*$ admits a parabolic
connection. Take a parabolic connection $D_0$ on $E_*$. Let $d+\omega$ be the connection on $\det E_* \,=\,
{\mathcal O}_X$ induced by $D_0$, where $\omega\, \in\, H^0(X,\, K_X)$ and $d$ is the de Rham differential. Then
$D_0- \frac{1}{2}\omega\otimes {\rm Id}_E$ is a parabolic $\text{SL}(2,{\mathbb C})$--connection on $E_*$.

For any parabolic connection $D_0$ on $E_*$, consider the second fundamental form ${\mathcal S}(D_0,\, {\mathcal L})$
in the second statement. If ${\mathcal S}(D_0,\, {\mathcal L})\,=\, 0$, then $D_0$ produces a parabolic connection
on the line subbundle ${\mathcal L}\, \subset\, E$ in \eqref{e4} equipped with the parabolic structure induced by $E_*$.
But the parabolic degree of this parabolic line bundle is $$g-1+\sum_{i=1}^n\frac{c_i}{2c_i+1}\, >\, 0.$$ This implies
that this parabolic line bundle does not admit any parabolic connection. Hence we conclude that
${\mathcal S}(D_0,\, {\mathcal L})\,\not=\, 0$. This implies that ${\mathcal S}(D_0,\, {\mathcal L})$ is an
isomorphism of $\mathcal L$.

The local monodromy of $D_0$ around
any $x\, \in\, S$ is conjugate to $\exp\left(-2\pi\sqrt{-1}\cdot {\rm Res}(D_0,\, x)\right)$
(see Remark \ref{rem1}).
Hence the eigenvalues of the local monodromy for $D_0$ around each $x_i\,\in\, S$ are $\exp\left(-2\pi\sqrt{-1}
\frac{c_i+1}{2c_i+1}\right)$ and $\exp\left(-2\pi\sqrt{-1}\frac{c_i}{2c_i+1}\right)$. This proves the third statement.
\end{proof}

We will see in Corollary \ref{cor5} that the endomorphism ${\mathcal S}(D_0,\, {\mathcal L})$ in Corollary
\ref{cor1}(2) is actually independent of the parabolic connection $D_0$ on $E_*$.

\begin{corollary}\label{cor2}
Take any parabolic connection $D_0$ on $E_*$. There is no holomorphic line subbundle of $E$
preserved by $D_0$.
\end{corollary}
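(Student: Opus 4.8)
The plan is to argue by contradiction, using the classification of holomorphic line subbundles of $E$ that is forced by the short exact sequence \eqref{e4}, together with the residue data of a parabolic connection and the fact (Corollary \ref{cor1}(2)) that the second fundamental form ${\mathcal S}(D_0,\, {\mathcal L})$ is an isomorphism. Suppose $M\,\subset\, E$ is a holomorphic line subbundle preserved by $D_0$. Then $D_0$ restricts to a logarithmic connection on $M$, so the parabolic line bundle $M_*$ obtained by giving $M$ the parabolic structure induced from $E_*$ must have parabolic degree zero, by Remark \ref{re-cc} (a parabolic line bundle admits a parabolic connection precisely when its parabolic degree vanishes). This is the key numerical constraint that will eliminate the possibilities.

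First I would split the analysis according to the composition $M\,\hookrightarrow\, E\,\xrightarrow{\,p\,}\,{\mathcal L}^*(-S)$. If this composition is zero, then $M\,\subset\,{\mathcal L}$, hence $M\,=\,{\mathcal L}$ since both are line subbundles of $E$ and ${\mathcal L}$ is saturated; but then $D_0$ would preserve ${\mathcal L}$, forcing ${\mathcal S}(D_0,\,{\mathcal L})\,=\,0$, contradicting Corollary \ref{cor1}(2). So the composition $p|_M$ is nonzero, hence a nonzero homomorphism of line bundles $M\,\to\,{\mathcal L}^*(-S)$, giving $\deg M\,\le\,\deg{\mathcal L}^*(-S)\,=\,-(g-1)-n$. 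On the other hand, since $M_*$ carries the induced parabolic structure and the only quasiparabolic line at each $x_i$ is ${\mathcal L}^*(-S)_{x_i}$, the parabolic weight contributed at $x_i$ is $\frac{c_i+1}{2c_i+1}$ if $M_{x_i}\,=\,{\mathcal L}^*(-S)_{x_i}$ and $\frac{c_i}{2c_i+1}$ otherwise; in either case it is at most $\frac{c_i+1}{2c_i+1}\,<\,1$. Then
\[
0\,=\,\text{par-deg}(M_*)\,=\,\deg M+\sum_{i=1}^n(\text{weight at }x_i)\,<\,-(g-1)-n+n\,=\,-(g-1)\,\le\,0
\]
when $g\,\ge\,1$, which is already a contradiction; and when $g\,=\,0$ one has $\deg M\,\le\,-n$, so $\text{par-deg}(M_*)\,<\,-n+n\,=\,0$, again a contradiction (here I use $n\,\ge\,3\,>\,0$ only to know the sum of weights is a sum over a nonempty set, but in fact the strict inequality $\deg M+\sum(\text{weights})\,<\,\deg M+n\,\le\,0$ suffices). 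Actually, to be careful about the boundary case where $p|_M$ is an isomorphism onto ${\mathcal L}^*(-S)$, giving $\deg M\,=\,-(g-1)-n$ exactly: then $M$ provides a holomorphic splitting of \eqref{e4}, and the argument in the proof of Proposition \ref{prop1}(2) (tensoring up to a splitting of \eqref{e3}) already rules this out; alternatively the parabolic degree computation still forces $\text{par-deg}(M_*)\,\le\,-(g-1)-n+\sum\frac{c_i+1}{2c_i+1}\,<\,-(g-1)\,\le\,0$, so no splitting with all weights maximal can have parabolic degree zero either.

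The main obstacle is bookkeeping the parabolic structure induced on $M$ correctly: one must verify that the induced parabolic weight of $M$ at $x_i$ is genuinely one of the two numbers $\frac{c_i}{2c_i+1}$, $\frac{c_i+1}{2c_i+1}$ according to whether or not $M_{x_i}$ equals the quasiparabolic line ${\mathcal L}^*(-S)_{x_i}$ — this follows from the standard definition of the parabolic structure induced on a subbundle (the weight jumps exactly when the subbundle meets the flag), and crucially both candidate weights lie in $[0,1)$, so they cannot compensate for the negativity of $\deg M$. Once this is pinned down, the inequality $\text{par-deg}(M_*)\,<\,0$ is immediate from $\deg M\,<\,0$ (strictly, since $\deg{\mathcal L}^*(-S)\,=\,-(g-1)-n\,<\,0$ under our hypotheses $g\,\ge\,1$, or $g\,=\,0$ and $n\,\ge\,3$), contradicting the vanishing of $\text{par-deg}(M_*)$ required for $M$ to be $D_0$-invariant. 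This completes the proof.
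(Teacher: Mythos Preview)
Your approach is essentially the paper's: both reduce to showing that a $D_0$-invariant line subbundle $M$ carries an induced parabolic connection, hence has parabolic degree zero, and then compare with the parabolic degree of the quotient $(E/\mathcal L)_*$ to force $M=\mathcal L$, contradicting Corollary~\ref{cor1}(2). Your argument for $g\ge 1$ is fine, but the genus~$0$ case has a genuine gap.

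First, when $g=0$ you assert $\deg M\le -n$, but $\deg\mathcal L^*(-S)=-(g-1)-n=1-n$, so you only have $\deg M\le 1-n$; with the crude bound ``each weight $<1$'' this yields merely $\text{par-deg}(M_*)<1$, which is no contradiction. Second, your fallback of invoking Proposition~\ref{prop1}(2) to eliminate the boundary case where $p\vert_M$ is an isomorphism does not work: by Proposition~\ref{prop1}(1) the sequence \eqref{e4} \emph{does} split holomorphically, and the lift to a splitting of \eqref{e3} in the proof of Proposition~\ref{prop1}(2) requires the extra compatibility $M_{x_i}=\mathcal L^*(-S)_{x_i}$ for every $x_i\in S$, which you have not established for a merely $D_0$-invariant $M$.

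The fix is to sharpen the weight bound: since $c_i>1$ one has $\frac{c_i+1}{2c_i+1}<\frac{2}{3}$, whence
\[
\text{par-deg}(M_*)\ \le\ (1-g-n)+\sum_{i=1}^n\frac{c_i+1}{2c_i+1}\ <\ 1-g-\tfrac{n}{3}\ \le\ 0
\]
under the standing hypothesis $n\ge 3$ when $g=0$. This is exactly the paper's computation that the induced parabolic degree of the quotient $\mathcal L^*(-S)$ is negative, from which it concludes directly (without case-splitting) that there is no nonzero parabolic homomorphism $M_*\to (E/\mathcal L)_*$, forcing $M=\mathcal L$.
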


\begin{proof}
Let $L\, \subset\, E$ be a holomorphic line subbundle preserved by $D_0$. Denoted by $L_*$ the parabolic
line bundle defined by the parabolic structure on $L$ induced by $E_*$. Since $D_0$ is a parabolic connection
on $E_*$, its restriction to $L$ is a parabolic connection on $L_*$. Therefore, we have
\begin{equation}\label{d1}
\text{par-deg}(L_*)\,=\, 0.
\end{equation}
Consider the parabolic structure on the quotient ${\mathcal L}^*(-S)$ in \eqref{e4} induced by $E_*$. Its parabolic
degree is negative, and hence from \eqref{d1} we conclude that there is no nonzero parabolic homomorphism from
$L_*$ to it. Consequently, the subbundle $L\, \subset\, E$ coincides with the subbundle $\mathcal L$ in \eqref{e4}.
Since $L\,=\, \mathcal L$ is preserved by $D_0$, the second fundamental form ${\mathcal S}(D_0,\, {\mathcal L})$
in \eqref{e14} vanishes identically. But this contradicts Corollary \ref{cor1}(2). Hence $D_0$ does not
preserve any holomorphic line subbundle of $E$.
\end{proof}

Given a parabolic connection $D$ on $E_*$, consider its monodromy representation
$$
{\rm Mon}_D\, :\, \pi_1(X\setminus S,\, y)\, \longrightarrow\, \text{GL}(2, {\mathbb C})\, ,
$$
where $y\, \in\, X\setminus D$ is a base point. Corollary \ref{cor2} implies that ${\rm Mon}_D$ is irreducible,
meaning the action of ${\rm Mon}_D(\pi_1(X\setminus S,\, y))\, \subset\, \text{GL}(2, {\mathbb C})$ on
${\mathbb C}^2$ does not preserve any line.

\subsection{Orbifold structure}

In this subsection we assume that $\{c_i\}_{i=1}^n$ in \eqref{e2} are all integers; recall that
$c_i\, >\,1$ for all $1\, \leq\, i\, \leq\, n$,

There is a ramified Galois covering
\begin{equation}\label{d2}
\varphi\, :\, Y\, \longrightarrow\, X
\end{equation}
satisfying the following two conditions:
\begin{itemize}
\item $\varphi$ is unramified over the complement $X\setminus S$, and

\item for every $x_i\, \in\, S$ and one (hence every) point $y\, \in\, \varphi^{-1}(x_i)$,
the order of the ramification of $\varphi$ at $y$ is $2c_i+1$.
\end{itemize}
Such a ramified Galois covering $\varphi$ exists; see \cite[p. 26, Proposition 1.2.12]{Na}.
Let
\begin{equation}\label{d3}
\Gamma\,:=\, \text{Gal}(\varphi) \,=\, \text{Aut}(Y/X) \, \subset\, \text{Aut}(Y)
\end{equation}
be the Galois group for the Galois covering $\varphi$. A holomorphic vector bundle $V\, \stackrel{q_0}{\longrightarrow}\, Y$
is called an \textit{orbifold bundle} if $\Gamma$ acts on the total space of $V$ such that
following three conditions hold:
\begin{enumerate}
\item The map $V\, \longrightarrow\, V$ given by the action of any element of $\Gamma$ on $V$ is holomorphic,

\item the projection $q_0$ is $\Gamma$--equivariant, and

\item the action of any $\gamma\, \in\, \Gamma$ on $V$ is a holomorphic automorphism
of the vector bundle $V$ over the automorphism $\gamma$ of $Y$.
\end{enumerate}

Recall that the parabolic weights of $E_*$ at any $x_i\, \in\, S$ are integral multiples of
$\frac{1}{2c_i+1}$. Therefore, there is a unique, up to an isomorphism, orbifold vector
bundle $\mathcal V$ of rank two on $Y$ which corresponds to the parabolic vector bundle $E_*$ \cite{Bi1},
\cite{Bo1}, \cite{Bo2}. The action of $\Gamma$ on this $\mathcal V$ produces an action of $\Gamma$
on the direct image $\varphi_*{\mathcal V}$. We have
\begin{equation}\label{e16}
(\varphi_*{\mathcal V})^\Gamma\,=\, E\, .
\end{equation}
From \eqref{e10} it follows that
\begin{equation}\label{e15}
\det {\mathcal V}\,=\, \bigwedge\nolimits^2 {\mathcal V}\,=\, {\mathcal O}_Y\, ,
\end{equation}
and the action of $\Gamma$ on the orbifold bundle $\det {\mathcal V}$ coincides with the action of
$\Gamma$ on ${\mathcal O}_Y$ given
by the action of $\Gamma$ on $Y$. Consider the subbundle $\mathcal L\, \subset\, E$ in \eqref{e4}. Let
\begin{equation}\label{e17}
\mathbf{L}\, \subset\, \mathcal V
\end{equation}
be the orbifold line subbundle corresponding to it. So the action of $\Gamma$ on
$\mathcal V$ preserves the subbundle $\mathbf{L}$, and the subbundle
$$
(\varphi_*{\mathbf L})^\Gamma\,\subset\, (\varphi_*{\mathcal V})^\Gamma\,=\, E\, .
$$
coincides with $\mathcal L$.

The action of $\Gamma$ on $Y$ produces an action of $\Gamma$ on the canonical bundle $K_Y$.
For any automorphism $\gamma\,\in\, \Gamma$ consider its differential $d\gamma\, :\,
TY\, \longrightarrow\, \gamma^*TY$. The action of $\gamma$ on $K_Y$ is given by
$((d\gamma)^*)^{-1}\,=\, (d\gamma^{-1})^*$. Therefore, $K_Y$ is an orbifold line bundle.

\begin{lemma}\label{lem2}
The orbifold line bundle $\mathbf{L}^{\otimes 2}$ (see \eqref{e17}) is isomorphic to the orbifold
line bundle $K_Y$.
\end{lemma}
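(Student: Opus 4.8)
The plan is to compute the orbifold line bundle $\mathbf{L}$ on $Y$ by pulling back the parabolic structure data on $\mathcal{L} \subset E_*$ and comparing with the divisor-theoretic description of $K_Y$. First I would recall that the orbifold line bundle corresponding to a parabolic line bundle on $X$ with weights $\alpha_i = \frac{a_i}{2c_i+1}$ at $x_i$ is obtained by pulling back the underlying line bundle via $\varphi$ and twisting by $\sum_i a_i \cdot \varphi^{-1}(x_i)_{\mathrm{red}}$ (or rather by the divisor supported on the ramification points with the appropriate multiplicities), where $\varphi^{-1}(x_i)_{\mathrm{red}}$ is the reduced preimage; this is the correspondence of \cite{Bi1}, \cite{Bo1}, \cite{Bo2}. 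So the first step is to identify the parabolic structure that $E_*$ induces on the subbundle $\mathcal{L} \subset E$.

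Next I would determine that parabolic weight. By Lemma \ref{lem1}, at each $x_i \in S$ the quasiparabolic line is ${\mathcal L}^*(-S)_{x_i}$, which is the complementary summand to $\mathcal{L}_{x_i}$ in the decomposition $E_{x_i} = \mathcal{L}_{x_i} \oplus {\mathcal L}^*(-S)_{x_i}$. Hence $\mathcal{L}_{x_i}$ is \emph{not} contained in the quasiparabolic subspace ${\mathcal L}^*(-S)_{x_i}$, so the parabolic weight induced on $\mathcal{L}$ at $x_i$ is the smaller weight $\frac{c_i}{2c_i+1}$ (the weight attached to the full fiber $E_{x_i}$ in \eqref{e8}). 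Thus $\mathcal{L}_*$, the induced parabolic line bundle, has underlying bundle $\mathcal{L}$ with $\mathcal{L}^{\otimes 2} \cong K_X$ and weight $\frac{c_i}{2c_i+1}$ at each $x_i$. Applying the orbifold correspondence, $\mathbf{L} = \varphi^* \mathcal{L} \otimes \mathcal{O}_Y\big(\sum_i c_i \cdot \varphi^{-1}(x_i)_{\mathrm{red}}\big)$, so $\mathbf{L}^{\otimes 2} = \varphi^* K_X \otimes \mathcal{O}_Y\big(\sum_i 2c_i \cdot \varphi^{-1}(x_i)_{\mathrm{red}}\big)$.

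Finally I would invoke the Riemann--Hurwitz formula for $\varphi : Y \to X$. Since $\varphi$ is ramified exactly over $S$ with ramification index $2c_i+1$ at every point of $\varphi^{-1}(x_i)$, the ramification divisor is $R = \sum_i 2c_i \cdot \varphi^{-1}(x_i)_{\mathrm{red}}$, and $K_Y = \varphi^* K_X \otimes \mathcal{O}_Y(R)$. Comparing with the expression for $\mathbf{L}^{\otimes 2}$ above gives $\mathbf{L}^{\otimes 2} \cong K_Y$ as holomorphic line bundles; one then checks that the $\Gamma$-equivariant structures agree, which follows because the orbifold correspondence is an equivalence of categories compatible with tensor products and because the $\Gamma$-action on $\mathbf{L}^{\otimes 2}$ restricts from the $\Gamma$-action on $\det \mathcal{V} = \mathcal{O}_Y$ twisted appropriately — equivalently, it is determined by the action on the fixed isomorphism $\mathcal{L}^{\otimes 2} \cong K_X$ pulled back to $Y$.

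The main obstacle I expect is bookkeeping the exact weight and the exact twisting divisor under the parabolic–orbifold dictionary: one must be careful that the induced parabolic weight on the \emph{sub}bundle $\mathcal{L}$ (as opposed to the quotient) is the one attached to the full fiber, and that the convention for the orbifold bundle produces precisely the divisor $\sum_i c_i \varphi^{-1}(x_i)_{\mathrm{red}}$ with no off-by-one error in the multiplicity (here the relation $(2c_i+1) = $ ramification index, together with the weight $\frac{c_i}{2c_i+1}$, must combine to give multiplicity $c_i$ on the reduced preimage, matching exactly half the Riemann--Hurwitz contribution $2c_i$). Once the conventions are pinned down, the identification $\mathbf{L}^{\otimes 2} \cong K_Y$ is immediate from Riemann--Hurwitz.
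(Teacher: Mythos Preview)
Your proposal is correct and follows essentially the same approach as the paper: identify the induced parabolic weight on $\mathcal{L}_*$ as $\frac{c_i}{2c_i+1}$, pass to the orbifold side via the correspondence to get $\mathbf{L}^{\otimes 2} \cong \varphi^*K_X \otimes \mathcal{O}_Y\big(\sum_i 2c_i\,\varphi^{-1}(x_i)_{\rm red}\big)$, and recognize this as $K_Y$ via Riemann--Hurwitz. The paper's proof is slightly more compressed (it works directly with $\mathcal{L}_*\otimes\mathcal{L}_*$ rather than computing $\mathbf{L}$ first and then squaring), but the content is identical.
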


\begin{proof}
Let ${\mathcal L}_*$ denote the holomorphic line subbundle $\mathcal L$ in \eqref{e4} equipped with
the parabolic structure on it induced by $E_*$. So the underlying holomorphic line
bundle for the parabolic bundle ${\mathcal L}_*\otimes{\mathcal L}_*$ is $K_X$, and the
parabolic weight at any $x_i\, \in\, S$ is $\frac{2c_i}{2c_i+1}$. Hence the orbifold
line bundle on $Y$ corresponding to ${\mathcal L}_*\otimes{\mathcal L}_*$ is
$$
(\varphi^*K_X)\otimes {\mathcal O}_Y\left(\sum_{i=1}^n 2c_i\varphi^{-1}(x_i)_{\rm red}\right)\,=\, K_Y
$$
equipped with the action of $\Gamma$ given by the action $\Gamma$ on $Y$,
where $\varphi^{-1}(x_i)_{\rm red}$ is the reduced inverse image of $x_i$.
Since the orbifold line bundle $\mathbf{L}^{\otimes 2}$ corresponds to the parabolic
line bundle ${\mathcal L}_*\otimes{\mathcal L}_*$, the lemma follows.
\end{proof}

{}From Lemma \ref{lem2} it follows that $\mathbf L$ is an orbifold theta characteristic
on $Y$, and from \eqref{e15} we have a short exact sequence of orbifold bundles
\begin{equation}\label{e18}
0\, \longrightarrow\, {\mathbf L} \, \longrightarrow\, \mathcal{V} 
\, \longrightarrow\, {\mathbf L}^* \, \longrightarrow\, 0\, .
\end{equation}

\begin{corollary}\label{cor3}
The short exact sequence in \eqref{e18} does not admit any $\Gamma$--equivariant
holomorphic splitting.
\end{corollary}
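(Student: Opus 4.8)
The plan is to deduce Corollary \ref{cor3} from the material already developed, chiefly from the fact that the ordinary (non-equivariant) extension \eqref{e3} does not split holomorphically, together with the equivalence of categories between parabolic bundles on $X$ and orbifold bundles on $Y$. First I would argue by contradiction: suppose there is a $\Gamma$--equivariant holomorphic splitting $s\,:\, {\mathbf L}^*\, \longrightarrow\, {\mathcal V}$ of \eqref{e18}. Taking $\Gamma$--invariant direct images is an exact functor on the subcategory of orbifold bundles (equivalently, it realizes the equivalence with parabolic bundles), so applying $\varphi_*(-)^\Gamma$ to the splitting produces a splitting of the short exact sequence of parabolic bundles underlying \eqref{e4}, namely a parabolic homomorphism $({\mathcal L}^*(-S))_*\, \longrightarrow\, E_*$ that is a right inverse of the parabolic projection $p$. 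In particular this gives an ${\mathcal O}_X$--linear holomorphic splitting $\rho\,:\, {\mathcal L}^*(-S)\, \longrightarrow\, E$ of \eqref{e4}.

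The heart of the argument is then already present verbatim in the proof of Proposition \ref{prop1}(2): once one has a holomorphic splitting $\rho$ of \eqref{e4}, one shows that $\rho$ factors through $\psi(\widetilde{E}(-S))\,\subset\, E$ — here using that the image $\rho(({\mathcal L}^*(-S))_x)$ coincides with the quasiparabolic line ${\mathcal L}^*(-S)_x\,\subset\, E_x$ for every $x\,\in\, S$, which holds because the splitting is parabolic — hence lifts to a homomorphism $\alpha\,:\, {\mathcal L}^*(-S)\, \longrightarrow\, \widetilde{E}(-S)$ splitting the top row of \eqref{e6}; tensoring $\alpha$ with ${\rm Id}_{{\mathcal O}_X(S)}$ yields a holomorphic splitting of \eqref{e3}, contradicting the fact that \eqref{e3} is a nonsplit extension. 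So I would either reproduce this chain briefly or simply invoke the argument in the proof of Proposition \ref{prop1}.

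There is one point that needs a little care, and I expect it to be the main (minor) obstacle: verifying that the splitting $\rho$ obtained from a $\Gamma$--equivariant orbifold splitting really respects the quasiparabolic lines, i.e. that $\rho(({\mathcal L}^*(-S))_x)\,=\, {\mathcal L}^*(-S)_x\,\subset\, E_x$ for all $x\,\in\, S$. This follows because under the orbifold--parabolic correspondence the subbundle ${\mathbf L}^*\,\subset\,{\mathcal V}$ corresponds to the quotient parabolic line bundle $({\mathcal L}^*(-S))_*$, and a $\Gamma$--equivariant splitting of \eqref{e18} corresponds to a parabolic morphism of parabolic bundles; a parabolic morphism from a parabolic line bundle whose weight at $x_i$ equals $\frac{c_i+1}{2c_i+1}$ into $E_*$ must, on fibers over $S$, land in the part of $E_{x_i}$ of weight at least $\frac{c_i+1}{2c_i+1}$, which by \eqref{e8} is precisely ${\mathcal L}^*(-S)_{x_i}$. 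Alternatively, and more simply, one can avoid this weight bookkeeping entirely: a $\Gamma$--equivariant splitting of \eqref{e18} would exhibit ${\mathcal V}\,=\,{\mathbf L}\oplus{\mathbf L}^*$ as orbifold bundles, hence $E_*\,=\, {\mathcal L}_*\oplus ({\mathcal L}^*(-S))_*$ as parabolic bundles, directly contradicting Proposition \ref{prop1}(2). This last route is the cleanest, and it is the one I would write: the corollary is an immediate consequence of Proposition \ref{prop1}(2) via the equivalence between orbifold bundles on $Y$ and parabolic bundles on $X$, since an equivariant splitting of \eqref{e18} would descend to a direct sum decomposition of $E_*$ into parabolic line bundles.
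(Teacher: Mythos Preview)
Your proposal is correct, and the clean route you settle on at the end --- that a $\Gamma$--equivariant splitting of \eqref{e18} would make $\mathcal V$ a direct sum of orbifold line bundles, hence $E_*$ a direct sum of parabolic line bundles, contradicting Proposition~\ref{prop1}(2) --- is exactly the paper's proof. The longer detour through the argument of Proposition~\ref{prop1}(2) is unnecessary, as you yourself note.
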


\begin{proof}
If \eqref{e18} has a $\Gamma$--equivariant holomorphic splitting, then $\mathcal V$
is a direct sum of orbifold line bundles. This would imply that the parabolic vector
bundle $E_*$ --- that corresponds to $\mathcal V$ --- is a direct sum of parabolic line bundles.
Therefore, from Proposition \ref{prop1}(2) it follows that
\eqref{e18} does not admit any $\Gamma$--equivariant holomorphic splitting.
\end{proof}

Actually a stronger form of Corollary \ref{cor3} can be proved using it.

\begin{proposition}\label{prop2}
The short exact sequence of holomorphic vector bundles in \eqref{e18} does not admit any
holomorphic splitting.
\end{proposition}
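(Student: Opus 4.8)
The plan is to deduce Proposition \ref{prop2} from Corollary \ref{cor3} by an averaging argument over the Galois group $\Gamma$. Suppose, for contradiction, that the short exact sequence \eqref{e18} admits a holomorphic splitting, i.e.\ there is a holomorphic homomorphism $s\, :\, {\mathbf L}^*\, \longrightarrow\, \mathcal{V}$ with $q\circ s\,=\, {\rm Id}_{{\mathbf L}^*}$, where $q\, :\, \mathcal{V}\,\longrightarrow\,{\mathbf L}^*$ is the projection in \eqref{e18}. The issue is that $s$ need not be $\Gamma$--equivariant. The natural remedy is to average: for each $\gamma\,\in\,\Gamma$, the conjugate $\gamma\cdot s\,:=\,\gamma_{\mathcal V}\circ(\gamma^{-1})^*s\circ\gamma_{{\mathbf L}^*}^{-1}$ (using the orbifold actions on $\mathcal V$ and on ${\mathbf L}^*$) is again a holomorphic splitting of \eqref{e18}, because all the maps in \eqref{e18} are $\Gamma$--equivariant. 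Then $\widetilde{s}\,:=\,\frac{1}{|\Gamma|}\sum_{\gamma\in\Gamma}\gamma\cdot s$ is a $\Gamma$--equivariant holomorphic homomorphism ${\mathbf L}^*\,\longrightarrow\,\mathcal V$, and since $q\circ(\gamma\cdot s)\,=\,{\rm Id}_{{\mathbf L}^*}$ for every $\gamma$, we get $q\circ\widetilde{s}\,=\,{\rm Id}_{{\mathbf L}^*}$ as well. Thus $\widetilde{s}$ is a $\Gamma$--equivariant holomorphic splitting of \eqref{e18}, contradicting Corollary \ref{cor3}.

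The one point requiring a little care, and the step I expect to be the main obstacle, is checking that $\gamma\cdot s$ is genuinely a holomorphic bundle map over the \emph{identity} of $Y$ (not over $\gamma$), so that the sum $\widetilde{s}$ makes sense as an honest homomorphism of vector bundles on $Y$. This is where the definition of an orbifold bundle (the three conditions listed before Lemma \ref{lem2}) is used: the action of $\gamma$ on $\mathcal V$ is a holomorphic isomorphism of vector bundles covering $\gamma$, i.e.\ it can be viewed as an isomorphism $\gamma^*\mathcal V\,\xrightarrow{\sim}\,\mathcal V$, and similarly for ${\mathbf L}^*$; pulling $s$ back by $\gamma$ gives a map $\gamma^*{\mathbf L}^*\,\longrightarrow\,\gamma^*\mathcal V$, and pre-/post-composing with these isomorphisms yields a map ${\mathbf L}^*\,\longrightarrow\,\mathcal V$ over ${\rm Id}_Y$. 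Once this bookkeeping is in place, the averaging and the verification $q\circ\widetilde s\,=\,{\rm Id}$ are immediate from $\Gamma$--equivariance of $q$ and linearity.

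An alternative, essentially equivalent, formulation avoids fixing a splitting: the obstruction to $\Gamma$--equivariantly splitting \eqref{e18} lies in $H^1(Y,\,{\rm Hom}({\mathbf L}^*,\,{\mathbf L}))^\Gamma$, while the obstruction to merely holomorphically splitting it lies in $H^1(Y,\,{\rm Hom}({\mathbf L}^*,\,{\mathbf L}))$; since $\Gamma$ is finite, the natural map $H^1(Y,\,{\rm Hom}({\mathbf L}^*,\,{\mathbf L}))^\Gamma\,\hookrightarrow\,H^1(Y,\,{\rm Hom}({\mathbf L}^*,\,{\mathbf L}))$ is injective (indeed the $\Gamma$--invariants split off as a direct summand via averaging), so the equivariant obstruction vanishes if and only if the non-equivariant one does. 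Combined with Corollary \ref{cor3}, which says the equivariant obstruction is nonzero, this gives that the non-equivariant obstruction is nonzero, i.e.\ \eqref{e18} has no holomorphic splitting. I would present the first (explicit averaging of a splitting) version in the paper, as it is the most elementary and self-contained, but mention that it amounts to the standard fact that taking $\Gamma$--invariants is exact for $\Gamma$ finite.
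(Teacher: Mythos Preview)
Your proposal is correct and is essentially identical to the paper's own proof: both assume a holomorphic splitting, conjugate it by each $\gamma\in\Gamma$ to obtain another splitting, average over $\Gamma$ to produce a $\Gamma$--equivariant splitting, and then invoke Corollary~\ref{cor3} for the contradiction. Your extra care about why $\gamma\cdot s$ is a bundle map over ${\rm Id}_Y$, and your cohomological reformulation, are nice elaborations but not needed beyond what the paper does.
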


\begin{proof}
Assume that there is a holomorphic splitting
$$\rho\,\, :\,\, {\mathbf L}^* \, \longrightarrow\, \mathcal{V}$$
of the short exact sequence of holomorphic vector bundles in \eqref{e18}.
Although $\rho$ itself may not be $\Gamma$--equivariant, using it we will construct a
$\Gamma$--equivariant splitting. For any $\gamma\, \in\,\Gamma$, the composition of
homomorphisms
$$
{\mathbf L}^* \, \stackrel{\gamma}{\longrightarrow}\, {\mathbf L}^* \, \stackrel{\rho}{\longrightarrow}\, 
{\mathcal V} \, \xrightarrow{\,\,\gamma^{-1}\,\,}\, \mathcal V\, ,
$$
which will be denoted by $\rho[\gamma]$, is also a holomorphic splitting of
the short exact sequence of holomorphic vector bundles in \eqref{e18}. Now the average
$$
\widetilde{\rho}\,:=\, \frac{1}{\# \Gamma}\sum_{\gamma\in \Gamma}\rho[\gamma]
\,\, :\,\, {\mathbf L}^* \, \longrightarrow\, \mathcal{V},
$$
where $\#\Gamma$ is the order of $\Gamma$, is a $\Gamma$--equivariant holomorphic splitting
of the short exact sequence of holomorphic vector bundles in \eqref{e18}. But this contradicts
Corollary \ref{cor3}. Therefore, the short exact sequence of holomorphic vector bundles
in \eqref{e18} does not admit any holomorphic splitting.
\end{proof}

The $\Gamma$--invariant holomorphic connections on $\mathcal V$ correspond to the
parabolic connections on $E_*$. Moreover, the parabolic ${\rm SL}(2,{\mathbb C})$--connections
on $E_*$ correspond to the $\Gamma$--invariant holomorphic connections $D_V$ on $\mathcal V$ 
that satisfy the condition that the holomorphic connection on 
$\det {\mathcal V}\,=\, {\mathcal O}_Y$ (see \eqref{e15}) induced by $D_V$ is the
trivial connection on ${\mathcal O}_Y$ given by the de Rham differential.

\begin{lemma}\label{lem3}
The orbifold vector bundle $\mathcal V$ admits ${\rm SL}(2,{\mathbb C})$--oper connections.
The parabolic ${\rm SL}(2,{\mathbb C})$--connections on the parabolic bundle $E_*$ are precisely the
$\Gamma$--invariant ${\rm SL}(2,{\mathbb C})$--oper structures on the orbifold bundle $\mathcal V$.
\end{lemma}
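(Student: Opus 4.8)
The plan is to deduce Lemma \ref{lem3} from the rank-two theory already assembled in this section, by transporting statements across the orbifold/parabolic dictionary. First I would recall the meaning of an ${\rm SL}(2,{\mathbb C})$--oper connection on $\mathcal V$ in the orbifold set-up: it is a $\Gamma$--invariant holomorphic connection $D_V$ on $\mathcal V$ whose induced connection on $\det\mathcal V = {\mathcal O}_Y$ is the trivial (de Rham) connection, and such that the underlying holomorphic bundle structure is the oper one. The oper condition here is exactly that $\mathbf L \subset \mathcal V$ is a holomorphic line subbundle satisfying $\mathbf L^{\otimes 2}\cong K_Y$ (Lemma \ref{lem2}), with the non-split extension \eqref{e18}, and that the second fundamental form of $\mathbf L$ for $D_V$ is an isomorphism $\mathbf L \to \mathbf L^* \otimes K_Y \cong \mathbf L$. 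So what must be shown is (i) existence of such a $D_V$, and (ii) that the $\Gamma$--invariant ones among them are in bijection with parabolic ${\rm SL}(2,{\mathbb C})$--connections on $E_*$.

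For (ii), the second sentence of the paragraph preceding Lemma \ref{lem3} already gives the bijection between parabolic ${\rm SL}(2,{\mathbb C})$--connections on $E_*$ and $\Gamma$--invariant holomorphic ${\rm SL}(2,{\mathbb C})$--connections on $\mathcal V$; it remains only to check that this bijection matches up the oper conditions on the two sides. On the parabolic side, the oper condition is precisely that the second fundamental form ${\mathcal S}(D_0,\,{\mathcal L})$ of \eqref{e14} is an isomorphism — and Corollary \ref{cor1}(2) asserts that \emph{every} parabolic connection $D_0$ on $E_*$ has this property. Since the orbifold line subbundle $\mathbf L \subset \mathcal V$ corresponds to ${\mathcal L}\subset E$ and the correspondence is functorial in a way that intertwines second fundamental forms (the second fundamental form of $\mathbf L$ for $D_V$ is the orbifold line-bundle map corresponding to ${\mathcal S}(D_0,\,{\mathcal L})$), the map ${\mathcal S}(D_V,\,\mathbf L)$ is an isomorphism iff ${\mathcal S}(D_0,\,{\mathcal L})$ is; hence every $\Gamma$--invariant holomorphic ${\rm SL}(2,{\mathbb C})$--connection on $\mathcal V$ is automatically an oper connection, and the bijection restricts to the desired one. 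For (i), existence: Corollary \ref{cor1}(1) gives a parabolic ${\rm SL}(2,{\mathbb C})$--connection on $E_*$, and transporting it across the correspondence yields a $\Gamma$--invariant holomorphic ${\rm SL}(2,{\mathbb C})$--connection on $\mathcal V$, which by the previous sentence is an oper connection. (Alternatively, one could invoke Proposition \ref{prop2}, which says \eqref{e18} does not split even non-equivariantly, so $\mathcal V$ with $\mathbf L$ genuinely defines an oper bundle structure, and then any connection compatible with the $\det$ condition works.)

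The step I expect to be the main (though modest) obstacle is verifying cleanly that the orbifold/parabolic dictionary of \cite{Bi1}, \cite{Bo1}, \cite{Bo2} intertwines the two notions of second fundamental form — i.e. that passing from $(E_*,\,{\mathcal L}_*,\,D_0)$ to $(\mathcal V,\,\mathbf L,\,D_V)$ sends ${\mathcal S}(D_0,\,{\mathcal L})$ to ${\mathcal S}(D_V,\,\mathbf L)$ under the identification ${\mathcal L}^*(-S)\otimes K_X(S)={\mathcal L}$ on $X$ versus ${\mathbf L}^*\otimes K_Y={\mathbf L}$ on $Y$ (this identification is exactly Lemma \ref{lem2} upstairs and the isomorphism $\mathcal L^*(-S)_x \cong \mathcal L_x$ downstairs used in Lemma \ref{lem1}). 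Once this compatibility is recorded, the proof is essentially a one-line combination of Corollary \ref{cor1}(1)--(2) with the stated correspondence. I would write it so that the compatibility is checked locally away from $S$ (where $\varphi$ is étale, so everything is literally pulled back and the claim is tautological) and then extended across $S$ by continuity/uniqueness of the orbifold extension, which is how the cited correspondence results are set up.

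\begin{proof}
Recall that an ${\rm SL}(2,{\mathbb C})$--oper connection on the orbifold bundle $\mathcal V$ is a holomorphic connection $D_V$ on $\mathcal V$ whose induced connection on $\det\mathcal V={\mathcal O}_Y$ (see \eqref{e15}) is the de Rham differential, and whose second fundamental form with respect to the holomorphic line subbundle $\mathbf L\subset\mathcal V$ of \eqref{e17} (which satisfies $\mathbf L^{\otimes 2}\cong K_Y$ by Lemma \ref{lem2} and sits in the non-split sequence \eqref{e18} by Proposition \ref{prop2}) is an isomorphism $\mathbf L\,\longrightarrow\,\mathbf L^*\otimes K_Y\cong\mathbf L$.

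As recalled just before the statement, the parabolic connections on $E_*$ correspond bijectively to the $\Gamma$--invariant holomorphic connections on $\mathcal V$, and under this bijection the parabolic ${\rm SL}(2,{\mathbb C})$--connections on $E_*$ correspond to those $\Gamma$--invariant holomorphic connections $D_V$ on $\mathcal V$ whose induced connection on $\det\mathcal V={\mathcal O}_Y$ is the de Rham differential. Since the orbifold line subbundle $\mathbf L\subset\mathcal V$ corresponds to the line subbundle ${\mathcal L}\subset E$ in \eqref{e4}, and the correspondence of \cite{Bi1}, \cite{Bo1}, \cite{Bo2} is compatible with pullback along the étale locus $Y\setminus\varphi^{-1}(S)\,\longrightarrow\,X\setminus S$, the second fundamental form of $\mathbf L$ for $D_V$ is, away from $\varphi^{-1}(S)$, the pullback of the second fundamental form ${\mathcal S}(D_0,\,{\mathcal L})$ of \eqref{e14}; by uniqueness of the orbifold extension this identification holds globally once the identifications ${\mathcal L}^*(-S)\otimes K_X(S)={\mathcal L}$ and $\mathbf L^*\otimes K_Y\cong\mathbf L$ are used (the latter being Lemma \ref{lem2}). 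Hence ${\mathcal S}(D_V,\,\mathbf L)$ is an isomorphism if and only if ${\mathcal S}(D_0,\,{\mathcal L})$ is.

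By Corollary \ref{cor1}(2), ${\mathcal S}(D_0,\,{\mathcal L})$ is an isomorphism for \emph{every} parabolic connection $D_0$ on $E_*$. Therefore every $\Gamma$--invariant holomorphic ${\rm SL}(2,{\mathbb C})$--connection on $\mathcal V$ is automatically an ${\rm SL}(2,{\mathbb C})$--oper connection, and the bijection above restricts to a bijection between parabolic ${\rm SL}(2,{\mathbb C})$--connections on $E_*$ and $\Gamma$--invariant ${\rm SL}(2,{\mathbb C})$--oper connections on $\mathcal V$. Finally, Corollary \ref{cor1}(1) provides a parabolic ${\rm SL}(2,{\mathbb C})$--connection on $E_*$; its image under the correspondence is a $\Gamma$--invariant ${\rm SL}(2,{\mathbb C})$--oper connection on $\mathcal V$, so $\mathcal V$ admits ${\rm SL}(2,{\mathbb C})$--oper connections. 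This completes the proof.
\end{proof}
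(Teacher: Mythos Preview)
Your proof is correct, but the route differs from the paper's in emphasis and in the source of the first statement.

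For existence, the paper argues directly on $Y$: Proposition~\ref{prop2} says the sequence \eqref{e18} does not split holomorphically (even forgetting $\Gamma$), so $\mathcal V$ with $\mathbf L\subset\mathcal V$ is the classical Gunning bundle on $Y$, and the standard oper theory on a compact Riemann surface then gives an ${\rm SL}(2,\mathbb C)$--oper connection immediately. You instead produce a parabolic ${\rm SL}(2,\mathbb C)$--connection on $E_*$ via Corollary~\ref{cor1}(1), transport it to $\mathcal V$, and then invoke Corollary~\ref{cor1}(2) together with the compatibility of second fundamental forms under the orbifold/parabolic dictionary to see it is an oper connection. Both work; the paper's route is shorter and avoids the need to check that the dictionary respects second fundamental forms, while yours has the virtue of making explicit why every $\Gamma$--invariant ${\rm SL}(2,\mathbb C)$--connection on $\mathcal V$ is automatically an oper connection --- a point the paper's proof of the second statement leaves implicit when it simply cites the correspondence of connections stated just before the lemma. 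Your extra care with the second fundamental form (checking on the \'etale locus and extending) is not strictly necessary if one is willing to quote Proposition~\ref{prop2} and the classical fact that every holomorphic ${\rm SL}(2,\mathbb C)$--connection on the Gunning bundle is an oper, but it does make the argument self-contained within the parabolic framework.
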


\begin{proof}
{}From Proposition \ref{prop2} it follows immediately that $\mathcal V$ admits ${\rm SL}(2,{\mathbb C})$--oper connections.
Now the second statement of the lemma is deduced from the above observation that
the parabolic ${\rm SL}(2,{\mathbb C})$--connections
on $E_*$ correspond to the $\Gamma$--invariant holomorphic connections $D_V$ on $\mathcal V$ 
that satisfy the condition that the holomorphic connection on 
$\det {\mathcal V}\,=\, {\mathcal O}_Y$ induced by $D_V$ is the
trivial connection on ${\mathcal O}_Y$.
\end{proof}

\section{Symmetric powers of parabolic bundle}\label{se3}

\subsection{Explicit description of some symmetric powers}

In Section \ref{se3.2} we will define parabolic ${\rm SL}(r,{\mathbb C})$--opers for all
$r\, \geq\, 2$. The definition involves symmetric powers of the parabolic vector bundle $E_*$
in \eqref{e9}. Keeping this in mind, we will explicitly describe a few low degree symmetric powers of the parabolic vector bundle $E_*$.
This will done using the alternative description of parabolic bundles --- given by Maruyama and Yokogawa
in \cite{MY} (see also \cite{Yo} and \cite[Appendix A3]{BDHP}) --- as filtered sheaves. This approach of \cite{MY}
is better suited for handling the tensor product, symmetric product exterior product of parabolic vector bundles.

First we will describe the second symmetric power $\text{Sym}^2(E_*)$ of the
parabolic vector bundle $E_*$. Consider the rank three holomorphic vector bundle 
$\text{Sym}^2(E)$, where $E$ is the vector bundle in \eqref{e4}. Since $\text{Sym}^2(E)$ is a quotient
of $E^{\otimes 2}$, any subspace of $E^{\otimes 2}_x$ produces a subspace of $\text{Sym}^2(E)_x$.
For each $x_i\, \in\, S$, let
$$B_i\, \subset\, \text{Sym}^2(E)_{x_i}\,=\, \text{Sym}^2(E_{x_i})$$ be the subspace given by the image of
$$E_{x_i}\otimes {\mathcal L}^*(-S)_{x_i}\,\subset\, E^{\otimes 2}_{x_i}$$ in $\text{Sym}^2
(E_{x_i})$, where ${\mathcal L}^*(-S)_{x_i}\,\subset\, E_{x_i}$ is the subspace in Lemma \ref{lem1}.
Consider the unique holomorphic vector bundle $E^2$ of rank three on $X$ that fits in the
following short exact sequence of sheaves
\begin{equation}\label{e19}
0\,\longrightarrow\, E^2\, \longrightarrow\,{\rm Sym}^2(E)(S)\,:=\, {\rm Sym}^2(E)\otimes{\mathcal O}_X(S)
\end{equation}
$$
\longrightarrow
\,\bigoplus_{i=1}^n \left(\text{Sym}^2(E)_{x_i}/B_i\right)\otimes{\mathcal O}_X(S)_{x_i}\, \longrightarrow\, 0\, .
$$
The holomorphic vector bundle underlying the parabolic vector bundle $\text{Sym}^2(E_*)$ is $E^2$.

\begin{lemma}\label{lem4}
For every $x_i\, \in\, S$, the fiber $E^2_x$ fits in a natural exact sequence
$$
0\, \longrightarrow\, {\mathcal L}^{\otimes 2}_{x_i}\, \longrightarrow\, E^2_{x_i}\,
\longrightarrow\, B_i\otimes{\mathcal O}_X(S)_{x_i}
$$
$$
=\, (E_{x_i}\otimes {\mathcal L}^*(-S)_{x_i})
\otimes{\mathcal O}_X(S)_{x_i} \,=\, (E\otimes {\mathcal L}^*)_{x_i} \, \longrightarrow\, 0\, .
$$
\end{lemma}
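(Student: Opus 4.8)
The plan is to extract the exact sequence for $E^2_{x_i}$ directly from the defining sequence \eqref{e19} by a local analysis near $x_i$. First I would fix $x_i \in S$ and a holomorphic coordinate $z$ centered at $x_i$, so that ${\mathcal O}_X(S)$ is locally trivialized by $z^{-1}$ near $x_i$. The sequence \eqref{e19} exhibits $E^2$ as the kernel of the surjection ${\rm Sym}^2(E)(S) \to \bigoplus_i \big({\rm Sym}^2(E)_{x_i}/B_i\big)\otimes{\mathcal O}_X(S)_{x_i}$; in a punctured neighborhood of $x_i$ the map ${\rm Sym}^2(E)(S)_{x} \to {\rm Sym}^2(E)_{x_i}/B_i$ is only nonzero at $x_i$ itself, so $E^2$ and ${\rm Sym}^2(E)(S)$ agree away from $S$, and the content is entirely at the fibers over points of $S$.

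Next I would make the fiberwise computation explicit using the decomposition of Lemma \ref{lem1}: write $E_{x_i} = \ell \oplus m$ where $\ell = {\mathcal L}_{x_i}$ and $m = {\mathcal L}^*(-S)_{x_i}$. Then ${\rm Sym}^2(E_{x_i}) = \ell^{\otimes 2} \oplus (\ell\otimes m) \oplus m^{\otimes 2}$, and by definition $B_i$ is the image of $E_{x_i}\otimes m$, i.e. $B_i = (\ell\otimes m)\oplus m^{\otimes 2}$, so the quotient ${\rm Sym}^2(E_{x_i})/B_i \cong \ell^{\otimes 2} = {\mathcal L}^{\otimes 2}_{x_i}$. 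To see what $E^2$ looks like near $x_i$ as a subsheaf of ${\rm Sym}^2(E)(S)$, choose local holomorphic frames $e_1$ of ${\mathcal L}$ and $e_2$ lifting a generator of $m$ (i.e. $e_2$ is a local section of $E$ projecting to a generator of ${\mathcal L}^*(-S)$, so that $z^{-1}e_2$ is a local frame of $E(S)$ near $x_i$ whose value at $x_i$ generates the $q_{x_i}$-factor). The local frame of ${\rm Sym}^2(E)(S)$ is then $z^{-1}e_1^2,\ z^{-1}e_1 e_2,\ z^{-1}e_2^2$; the subsheaf $E^2$, being the kernel of evaluation into ${\rm Sym}^2(E_{x_i})/B_i$ along the $e_1^2$-direction, is freely generated near $x_i$ by $e_1^2,\ z^{-1}e_1 e_2,\ z^{-1}e_2^2$. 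Evaluating this frame at $x_i$ gives a basis of $E^2_{x_i}$; the vectors $z^{-1}e_1e_2|_{x_i}$ and $z^{-1}e_2^2|_{x_i}$ span a subspace which, using the Poincaré adjunction isomorphism \eqref{af} and the identification ${\mathcal O}_X(S)_{x_i}\otimes{\mathcal L}^*(-S)_{x_i} = {\mathcal L}^*_{x_i}$, is canonically $B_i\otimes{\mathcal O}_X(S)_{x_i} = (E\otimes{\mathcal L}^*)_{x_i}$, while $e_1^2|_{x_i}$ spans a complement canonically isomorphic to ${\mathcal L}^{\otimes 2}_{x_i}$. Declaring the first map to send ${\mathcal L}^{\otimes 2}_{x_i}$ to the class of $e_1^2|_{x_i}$ and the second to be the evident projection gives the asserted exact sequence; the chain of equalities $B_i\otimes{\mathcal O}_X(S)_{x_i} = (E_{x_i}\otimes{\mathcal L}^*(-S)_{x_i})\otimes{\mathcal O}_X(S)_{x_i} = (E\otimes{\mathcal L}^*)_{x_i}$ is just the definition of $B_i$ together with ${\mathcal L}^*(-S)\otimes{\mathcal O}_X(S) = {\mathcal L}^*$.

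The main thing requiring care — the only real obstacle — is checking that the maps produced this way are \emph{canonical}, i.e. independent of the auxiliary choices of $z$, $e_1$, and the lift $e_2$. Independence of $z$ follows because \eqref{af} is coordinate-independent (as already noted in the text after \eqref{af}); independence of the scaling of $e_1$ is immediate since both ${\mathcal L}^{\otimes 2}_{x_i}$ and $e_1^2$ scale quadratically; and changing the lift $e_2$ by $e_2 \mapsto e_2 + f e_1$ with $f$ holomorphic changes $z^{-1}e_1e_2$ and $z^{-1}e_2^2$ only by elements of the subsheaf already accounted for, so the induced identification of the quotient with $(E\otimes{\mathcal L}^*)_{x_i}$, and of the sub with ${\mathcal L}^{\otimes 2}_{x_i}$, is unchanged. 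Globally, one observes that the construction of $E^2$ in \eqref{e19} is canonical, and that the sub-line-bundle ${\mathcal L}^{\otimes 2}\subset E^2$ near $x_i$ glues with the image of ${\rm Sym}^2({\mathcal L})$ away from $S$ to a well-defined coherent subsheaf; the cokernel is then identified with $E\otimes{\mathcal L}^*$ in a neighborhood of $x_i$ by the above and is zero elsewhere relative to the comparison, yielding the stated fiberwise exact sequence. I would conclude by remarking that this is precisely the fibrewise shadow of a short exact sequence $0 \to {\mathcal L}^{\otimes 2} \to E^2|_{\text{near }S} \to (E\otimes{\mathcal L}^*)|_{\text{near }S} \to 0$ of sheaves in a neighborhood of each $x_i$.
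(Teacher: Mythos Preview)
Your argument is correct and rests on the same underlying observation as the paper's: the vector bundle $E^2$ sits between ${\rm Sym}^2(E)$ and ${\rm Sym}^2(E)(S)$, so at each $x_i$ the subspace ${\mathcal L}^{\otimes 2}_{x_i}\subset E^2_{x_i}$ is the image of the fiber map ${\rm Sym}^2(E)_{x_i}\to E^2_{x_i}$, while the quotient $B_i\otimes{\mathcal O}_X(S)_{x_i}$ is the image of $E^2_{x_i}\to {\rm Sym}^2(E)(S)_{x_i}$. The paper packages this as a commutative diagram of short exact sequences, with the natural inclusion $\mathbf{f}\colon {\rm Sym}^2(E)\hookrightarrow E^2$ as the left vertical map; the two homomorphisms in the statement are then read off directly from $\mathbf{f}(x_i)$ and from the inclusion of $E^2$ into ${\rm Sym}^2(E)(S)$, and canonicity is automatic. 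Your local-frame computation arrives at the same maps but trades that diagrammatic economy for explicitness, at the cost of having to verify afterward that the maps are independent of the choices of $z$, $e_1$, and the lift $e_2$; that verification is precisely what the paper's diagram absorbs.
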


\begin{proof}
Consider the commutative digram
$$
\begin{matrix}
0 & \longrightarrow & \text{Sym}^2(E) & \longrightarrow & {\rm Sym}^2(E)(S)& \longrightarrow &
\bigoplus_{i=1}^n \text{Sym}^2(E)(S)_{x_i}& \longrightarrow & 0\\
&&\,\, \Big\downarrow \mathbf{f}&& \,\,\,\,\,\Big\downarrow {\rm Id}&& \Big\downarrow\\
0& \longrightarrow & E^2 & \longrightarrow & {\rm Sym}^2(E)(S) & \longrightarrow &
\bigoplus_{i=1}^n \frac{\text{Sym}^2(E)(S)_{x_i}}{B_i\otimes{\mathcal O}_X(S)_{x_i}} & \longrightarrow & 0.
\end{matrix}
$$
For any $x\, \in\, S$, the map ${\mathbf f}(x)\, :\, \text{Sym}^2(E)_x\, \longrightarrow\,
E^2_x$ is injective on the subspace ${\mathcal L}^{\otimes 2}_{x_i}\, \hookrightarrow\, \text{Sym}^2(E)_{x_i}$, and
moreover ${\mathbf f}(x_i)({\mathcal L}^{\otimes 2}_{x_i})\, \subset\, E^2_{x_i}$
coincides with ${\mathbf f}(x_i)(\text{Sym}^2(E)_{x_i})$. Therefore, the subspace
${\mathcal L}^{\otimes 2}_{x_i}\, \hookrightarrow\, E^2_{x_i}$ in the lemma is the
image of the homomorphism ${\mathbf f}(x_i)$.

For the map $E^2\, \longrightarrow\, {\rm Sym}^2(E)(S)\, :=\, {\rm Sym}^2(E)\otimes{\mathcal O}_X(S)$ in
\eqref{e19}, the image of $E^2_{x_i}$ is
$$
B_i\otimes{\mathcal O}_X(S)_{x_i}
\,=\, (E_{x_i}\otimes {\mathcal L}^*(-S)_{x_i})
\otimes{\mathcal O}_X(S)_{x_i} \,=\, (E\otimes {\mathcal L}^*)_{x_i}\, \subset\, 
{\rm Sym}^2(E)(S)_{x_i}\, .
$$
This proves the lemma.
\end{proof}

For any $x_i\, \in\, S$, consider the subspace 
$$
{\mathcal L}^*(-S)^{\otimes 2}_{x_i}\,\subset\, B_i\,=\, ({\mathcal L}_{x_i}\otimes {\mathcal L}^*(-S)_{x_i})
\oplus {\mathcal L}^*(-S)^{\otimes 2}_{x_i}\, .
$$
Let
\begin{equation}\label{e20}
\mathcal{F}_i\, \subset\, E^2_{x_i}
\end{equation}
be the inverse image of ${\mathcal L}^*(-S)^{\otimes 2}_{x_i}\otimes {\mathcal O}_X(S)_{x_i}\,\subset\,
B_i\otimes {\mathcal O}_X(S)_{x_i}$ for the quotient map $E^2_{x_i}\,
\longrightarrow\, B_i\otimes{\mathcal O}_X(S)_{x_i}$ in Lemma \ref{lem4}. 

As mentioned before, the holomorphic vector bundle underlying the parabolic vector bundle $\text{Sym}^2(E_*)$ is $E^2$.
The quasiparabolic filtration of $E^2_{x_i}$, where $x_i\, \in\, S$, is the following:
\begin{equation}\label{e21}
{\mathcal L}^{\otimes 2}_{x_i}\,\, \subset\, \,\mathcal{F}_i\,\, \subset\, \,E^2_{x_i}\, ,
\end{equation}
where ${\mathcal L}^{\otimes 2}_{x_i}$ and ${\mathcal F}_i$ are the subspaces in Lemma \ref{lem4} and \eqref{e20}
respectively. The parabolic weight of ${\mathcal L}^{\otimes 2}_{x_i}$ is $\frac{2c_i}{2c_i+1}$
and the parabolic weight of $\mathcal{F}_i$ is $\frac{1}{2c_i+1}$; the parabolic weight of $E^2_{x_i}$ is $0$.

The parabolic symmetric product $\text{Sym}^3(E_*)$ is actually a little easier to describe. The holomorphic
vector bundle underlying the parabolic vector bundle $\text{Sym}^3(E_*)$ is the rank four vector bundle
\begin{equation}\label{e22}
E^3\,:=\, (\text{Sym}^3(E))\otimes {\mathcal O}_X(S).
\end{equation}
For each $x_i\, \in\, S$, the decomposition of $E_{x_i}$ in Lemma \ref{lem1} gives the
following decomposition of the fiber $E^3_{x_i}$:
\begin{equation}\label{e24}
\left(({\mathcal L}^*(-S)^{\otimes 3}_{x_i})\oplus ({\mathcal L}^*(-S)^{\otimes 2}_{x_i}\otimes {\mathcal L}_{x_i})
\oplus ({\mathcal L}^*(-S)_{x_i} \otimes {\mathcal L}^{\otimes 2}_{x_i})
\oplus ({\mathcal L}^{\otimes 3}_{x_i})\right)\otimes {\mathcal O}_X(S)_{x_i}\,=\, E^3_{x_i}\, .
\end{equation}
The quasiparabolic filtration of $E^3_{x_i}$ is
\begin{equation}\label{e23}
({\mathcal L}^*(-S)^{\otimes 3}_{x_i})\otimes {\mathcal O}_X(S)_{x_i}\, \subset\,
\left(({\mathcal L}^*(-S)^{\otimes 3}_{x_i})\oplus ({\mathcal L}^*(-S)^{\otimes 2}_{x_i}\otimes {\mathcal L}_{x_i})\right)
\otimes {\mathcal O}_X(S)_{x_i}
\end{equation}
$$
\subset\, \left(({\mathcal L}^*(-S)^{\otimes 3}_{x_i})\oplus ({\mathcal L}^*(-S)^{\otimes 2}_{x_i}\otimes {\mathcal L}_{x_i})
\oplus ({\mathcal L}^*(-S)_{x_i} \otimes {\mathcal L}^{\otimes 2}_{x_i})\right)\otimes {\mathcal O}_X(S)_{x_i}\,\subset\, E^3_{x_i}.
$$
The parabolic weight of ${\mathcal L}^*(-S)^{\otimes 3}_{x_i}\otimes {\mathcal O}_X(S)_{x_i}$ is $\frac{c_i+2}{2c_i+1}$,
The parabolic weight of $$\left(({\mathcal L}^*(-S)^{\otimes 3}_{x_i})\oplus ({\mathcal L}^*(-S)^{\otimes 2}_{x_i}\otimes {\mathcal L}_{x_i})
\right)\otimes {\mathcal O}_X(S)_{x_i}$$
is $\frac{c_i+1}{2c_i+1}$, the parabolic weight of $\left(({\mathcal L}^*(-S)^{\otimes 3}_{x_i})\oplus ({\mathcal L}^*(-S)^{\otimes 2}_{x_i}
\otimes {\mathcal L}_{x_i})\oplus ({\mathcal L}^*(-S)_{x_i} \otimes {\mathcal L}^{\otimes 2}_{x_i})\right)\otimes {\mathcal O}_X(S)_{x_i}$ is
$\frac{c_i}{2c_i+1}$, and the parabolic weight of $E^3_{x_i}$ is $\frac{c_i-1}{2c_i+1}$.

Finally, we will describe the parabolic symmetric product $\text{Sym}^4(E_*)$. Consider the rank five vector bundle
$$
\text{Sym}^4(E)(2S)\,=\, (\text{Sym}^4(E))\otimes {\mathcal O}_X(2S)\, .
$$
Using Lemma \ref{lem1}, the fiber $\text{Sym}^4(E)(2S)_{x_i}$, where $x_i\, \in\, S$, decomposes
into a direct sum of lines. More precisely, as in \eqref{e24},
\begin{equation}\label{e27}
\text{Sym}^4(E)(2S)_{x_i}\,=\, (({\mathcal L}^*)^{\otimes 4}(-2S))_{x_i}\oplus
(({\mathcal L}^*)^{\otimes 3}\otimes {\mathcal L}(-S))_{x_i}
\end{equation}
$$
\oplus (({\mathcal L}^*)^{\otimes 2}\otimes {\mathcal L}^{\otimes 2})_{x_i}
\oplus ({\mathcal L}^* \otimes {\mathcal L}^{\otimes 3}(S))_{x_i}\oplus ({\mathcal L}^{\otimes 4}(2S))_{x_i}.
$$
Let $E^4$ denote the vector bundle of rank five defined by the following short exact sequence of sheaves:
\begin{equation}\label{e25}
0\, \longrightarrow\, E^4 \, \stackrel{\mathbf h}{\longrightarrow}\, \text{Sym}^4(E)(2S)\, \longrightarrow\,
\end{equation}
$$
\bigoplus_{i=1}^n {\mathcal Q}_i\, =\,
\bigoplus_{i=1}^n \frac{\text{Sym}^4(E)(2S)_{x_i}}{(({\mathcal L}^*)^{\otimes 4}(-2S))_{x_i}\oplus
(({\mathcal L}^*)^{\otimes 3}\otimes {\mathcal L}(-S))_{x_i}\oplus
(({\mathcal L}^*)^{\otimes 2}\otimes {\mathcal L}^{\otimes 2})_{x_i}}\, \longrightarrow\, 0,
$$
where 
\begin{equation}\label{e25a}
{\mathcal Q}_i\, :=\,
\frac{\text{Sym}^4(E)(2S)_{x_i}}{(({\mathcal L}^*)^{\otimes 4}(-2S))_{x_i}\oplus
(({\mathcal L}^*)^{\otimes 3}\otimes {\mathcal L}(-S))_{x_i}\oplus
(({\mathcal L}^*)^{\otimes 2}\otimes {\mathcal L}^{\otimes 2})_{x_i}}.
\end{equation}
The holomorphic vector bundle underlying the parabolic vector bundle $\text{Sym}^4(E_*)$ is $E^4$ defined
in \eqref{e25}.

\begin{lemma}\label{lem5}
For every $x_i\, \in\, S$, the fiber $E^4_{x_i}$ fits in the following short exact sequence of vector spaces:
$$
0\, \longrightarrow\, ({\mathcal L}^* \otimes {\mathcal L}^{\otimes 3})_{x_i}\oplus ({\mathcal L}^{\otimes 4}(S))_{x_i}
\, \longrightarrow\, E^4_{x_i} 
$$
$$
\xrightarrow{\,\,\rho_i\,\,}\, (({\mathcal L}^*)^{\otimes 4}(-2S))_{x_i}\oplus
(({\mathcal L}^*)^{\otimes 3}\otimes {\mathcal L}(-S))_{x_i}\oplus
(({\mathcal L}^*)^{\otimes 2}\otimes {\mathcal L}^{\otimes 2})_{x_i}\, \longrightarrow\, 0\, .
$$
\end{lemma}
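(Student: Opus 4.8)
The plan is to prove this exactly as Lemma \ref{lem4} was proved, only with one more ``pole'' to carry around. First I would record the commutative diagram of short exact sequences of sheaves on $X$
$$
\begin{matrix}
0 & \longrightarrow & \text{Sym}^4(E)(S) & \longrightarrow & \text{Sym}^4(E)(2S) & \longrightarrow &
\bigoplus_{i=1}^n \text{Sym}^4(E)(2S)_{x_i} & \longrightarrow & 0\\
&&\,\, \Big\downarrow \mathbf{f}&& \,\,\,\,\,\Big\downarrow {\rm Id}&& \Big\downarrow\\
0& \longrightarrow & E^4 & \stackrel{\mathbf{h}}{\longrightarrow} & \text{Sym}^4(E)(2S) & \longrightarrow &
\bigoplus_{i=1}^n {\mathcal Q}_i & \longrightarrow & 0,
\end{matrix}
$$
whose bottom row is \eqref{e25}, whose top row is the defining sequence of the subsheaf $\text{Sym}^4(E)(S)\, \subset\, \text{Sym}^4(E)(2S)$ (with cokernel the skyscraper $\bigoplus_{i=1}^n \text{Sym}^4(E)(2S)_{x_i}$), whose right vertical arrow is induced by the defining quotients ${\mathcal Q}_i$ of \eqref{e25a}, and whose left vertical arrow $\mathbf{f}$ is just the inclusion of subsheaves of $\text{Sym}^4(E)(2S)$: indeed $\text{Sym}^4(E)(S)$, being the kernel of the surjection onto $\bigoplus_{i=1}^n \text{Sym}^4(E)(2S)_{x_i}$, lies in the kernel $E^4$ of the further quotient onto $\bigoplus_{i=1}^n {\mathcal Q}_i$. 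Thus $\mathbf{f}$ is injective with cokernel a skyscraper supported on $S$, and the diagram commutes because every map in it is a restriction of a map on $\text{Sym}^4(E)(2S)$.

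Next I would fix a point $x_i\, \in\, S$, a holomorphic coordinate $z$ at $x_i$ with $z(x_i)\,=\, 0$, and a local holomorphic frame $w_1,\, w_2,\, w_3,\, w_4,\, w_5$ of $\text{Sym}^4(E)(2S)$ near $x_i$ adapted to the decomposition \eqref{e27}, ordered so that $w_1,\, w_2,\, w_3$ generate the three line summands quotiented out in the definition \eqref{e25a} of ${\mathcal Q}_i$, while $w_4,\, w_5$ generate $({\mathcal L}^*\otimes {\mathcal L}^{\otimes 3}(S))$ and $({\mathcal L}^{\otimes 4}(2S))$ respectively. Since \eqref{e25} presents $E^4$ as the kernel of $\text{Sym}^4(E)(2S)\, \to\, \bigoplus_i {\mathcal Q}_i$, a local section $\sum_j a_j w_j$ of $\text{Sym}^4(E)(2S)$ lies in $E^4$ exactly when $a_4$ and $a_5$ vanish at $x_i$; hence $E^4$ is freely generated near $x_i$ by $w_1,\, w_2,\, w_3,\, zw_4,\, zw_5$. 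Evaluating $\mathbf{h}$ at $x_i$ then shows that $\mathbf{h}(x_i)\, :\, E^4_{x_i}\, \longrightarrow\, \text{Sym}^4(E)(2S)_{x_i}$ carries the classes of $w_1,\, w_2,\, w_3$ to the corresponding basis vectors and annihilates the classes of $zw_4,\, zw_5$. Consequently the image of $\mathbf{h}(x_i)$ is the sum of the three summands in \eqref{e25a}, which is the asserted quotient term, and $\rho_i$ is $\mathbf{h}(x_i)$ followed by this identification; the kernel of $\mathbf{h}(x_i)$ is the plane spanned by the classes of $zw_4$ and $zw_5$, and this plane is also the image of $\mathbf{f}(x_i)$, since the classes of $zw_1,\, zw_2,\, zw_3$ vanish in $E^4_{x_i}$ whereas $zw_4,\, zw_5$ are among the chosen generators of $E^4$.

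Finally I would identify this kernel plane with $({\mathcal L}^*\otimes {\mathcal L}^{\otimes 3})_{x_i}\oplus ({\mathcal L}^{\otimes 4}(S))_{x_i}$. Being the image of $\mathbf{f}(x_i)$, it is a quotient of $\text{Sym}^4(E)(S)_{x_i}$; under the evident analogue of \eqref{e27} for $\text{Sym}^4(E)(S)$ --- each summand carrying one fewer power of ${\mathcal O}_X(S)$ --- the classes $zw_4,\, zw_5$ span its top two summands $({\mathcal L}^*\otimes {\mathcal L}^{\otimes 3})_{x_i}$ and $({\mathcal L}^{\otimes 4}(S))_{x_i}$, the remaining three summands being killed by $\mathbf{f}(x_i)$. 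Concretely, near $x_i$ one has ${\mathcal O}_X(S)\,=\, {\mathcal O}_X(x_i)$, so multiplication by $z$ carries the subsheaf $({\mathcal L}^*\otimes {\mathcal L}^{\otimes 3}(S))$ of $\text{Sym}^4(E)(2S)$ isomorphically onto $({\mathcal L}^*\otimes {\mathcal L}^{\otimes 3})$, and similarly for the top summand; this identification is independent of the choice of $z$, exactly as for the isomorphism discussed just after \eqref{af}. Assembling the image and kernel of $\mathbf{h}(x_i)$ now yields the short exact sequence in the statement. I expect the only genuinely delicate point to be this last bookkeeping of the ${\mathcal O}_X(S)$--twists --- checking that the two surviving line summands come out precisely as $({\mathcal L}^*\otimes {\mathcal L}^{\otimes 3})_{x_i}$ and $({\mathcal L}^{\otimes 4}(S))_{x_i}$ rather than some $(\pm S)$--twist of them --- all of the remaining steps being parallel to the proof of Lemma \ref{lem4}.
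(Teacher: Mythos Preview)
Your proposal is correct and follows essentially the same approach as the paper's proof: both use the identical commutative diagram comparing $\text{Sym}^4(E)(S)\hookrightarrow\text{Sym}^4(E)(2S)$ with the defining sequence \eqref{e25}, identify $\rho_i$ with $\mathbf{h}(x_i)$, and obtain the kernel as the image of $\mathbf{f}(x_i)$ via the decomposition of $\text{Sym}^4(E)(S)_{x_i}$ analogous to \eqref{e27}. The only difference is presentational: you carry out the argument in an explicit local frame $w_1,\dots,w_5$ and coordinate $z$, whereas the paper phrases the same computation in terms of the summand subspaces directly.
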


\begin{proof}
The projection
$$
\rho_i\, :\, E^4_{x_i} \, \longrightarrow\, (({\mathcal L}^*)^{\otimes 4}(-2S))_{x_i}\oplus
(({\mathcal L}^*)^{\otimes 3}\otimes {\mathcal L}(-S))_{x_i}\oplus
(({\mathcal L}^*)^{\otimes 2}\otimes {\mathcal L}^{\otimes 2})_{x_i}
$$
in the lemma is given by the homomorphism ${\mathbf h}(x_i)$ in \eqref{e25}. To describe the homomorphism
$$
({\mathcal L}^* \otimes {\mathcal L}^{\otimes 3})_{x_i}\oplus ({\mathcal L}^{\otimes 4}(S))_{x_i}\, \longrightarrow\, E^4_{x_i} 
$$
in the lemma, we consider the commutative diagram of homomorphisms
$$
\begin{matrix}
0 & \longrightarrow & \text{Sym}^4(E)(S) & \longrightarrow & {\rm Sym}^4(E)(2S)& \longrightarrow &
\bigoplus_{i=1}^n \text{Sym}^4(E)(2S)_{x_i}& \longrightarrow & 0\\
&&\,\, \Big\downarrow \mathbf{f}&& \Big\downarrow && \Big\downarrow\\
0& \longrightarrow & E^4 & \longrightarrow & {\rm Sym}^4(E)(2S) & \longrightarrow &
\bigoplus_{i=1}^n {\mathcal Q}_i& \longrightarrow & 0
\end{matrix}
$$
where ${\mathcal Q}_i$ is defined in \eqref{e25a}. Let
\begin{equation}\label{e26}
{\mathbf f}(x_i)\,:\, \text{Sym}^4(E)(S)_{x_i}\, \longrightarrow\, E^4_{x_i}
\end{equation}
be the restriction of it to $x_i\, \in\, S$. As in \eqref{e27}, we have the decomposition
$$
\text{Sym}^4(E)(S)_{x_i}\,=\, (({\mathcal L}^*)^{\otimes 4}(-3S))_{x_i}\oplus
(({\mathcal L}^*)^{\otimes 3}\otimes {\mathcal L}(-2S))_{x_i}
$$
$$
\oplus (({\mathcal L}^*)^{\otimes 2}\otimes {\mathcal L}^{\otimes 2}(-S))_{x_i}
\oplus ({\mathcal L}^* \otimes {\mathcal L}^{\otimes 3})_{x_i}\oplus ({\mathcal L}^{\otimes 4}(S))_{x_i}.
$$
The subspace
$$
(({\mathcal L}^*)^{\otimes 4}(-3S))_{x_i}\oplus
(({\mathcal L}^*)^{\otimes 3}\otimes {\mathcal L}(-2S))_{x_i}
\oplus (({\mathcal L}^*)^{\otimes 2}\otimes {\mathcal L}^{\otimes 2}(-S))_{x_i}
\, \subset\, \text{Sym}^4(E)(S)_{x_i}
$$
is the kernel of the homomorphism ${\mathbf f}(x_i)$ in \eqref{e26}. The restriction of
${\mathbf f}(x_i)$ to the subspace
$$
({\mathcal L}^* \otimes {\mathcal L}^{\otimes 3})_{x_i}\oplus ({\mathcal L}^{\otimes 4}(S))_{x_i}
\, \subset\, \text{Sym}^4(E)(S)_{x_i}
$$
is injective. Therefore, ${\mathbf f}(x_i)$ gives the homomorphism
$$
({\mathcal L}^* \otimes {\mathcal L}^{\otimes 3})_{x_i}\oplus ({\mathcal L}^{\otimes 4}(S))_{x_i}
\, \longrightarrow\, E^4_{x_i} 
$$
in the lemma. It is evident that the quotient map $E^4_{x_i} \, \longrightarrow\, E^4_{x_i}/( 
({\mathcal L}^* \otimes {\mathcal L}^{\otimes 3})_{x_i}\oplus ({\mathcal L}^{\otimes 4}(S))_{x_i})$ coincides
with $\rho_i$.
\end{proof}

Define the subspaces
\begin{equation}\label{e28}
{\mathcal F}^i_3 \, :=\, \rho^{-1}_i((({\mathcal L}^*)^{\otimes 4}(-2S))_{x_i})\, \subset\,
{\mathcal F}^i_4 \, :=\, \rho^{-1}_i((({\mathcal L}^*)^{\otimes 4}(-2S))_{x_i}\oplus
(({\mathcal L}^*)^{\otimes 3}\otimes {\mathcal L}(-S))_{x_i})\, \subset\, E^4_{x_i}
\end{equation}
where $\rho_i$ is the homomorphism in Lemma \ref{lem5}.

As mentioned before, the holomorphic vector bundle underlying the parabolic vector bundle $\text{Sym}^4(E_*)$ is $E^4$.
The quasiparabolic filtration of $E^4_{x_i}$ is
$$
({\mathcal L}^* \otimes {\mathcal L}^{\otimes 3})_{x_i}\,\subset\, 
({\mathcal L}^* \otimes {\mathcal L}^{\otimes 3})_{x_i}\oplus ({\mathcal L}^{\otimes 4}(S))_{x_i}
\, \subset\,{\mathcal F}^i_3 \, \subset\,{\mathcal F}^i_4 \, \subset\, E^4_{x_i}
$$
(see Lemma \ref{lem5} and \eqref{e28}). The parabolic weight of $({\mathcal L}^* \otimes {\mathcal L}^{\otimes 3})_{x_i}$
is $\frac{2c_i}{2c_i+1}$, the parabolic weight of $({\mathcal L}^* \otimes {\mathcal L}^{\otimes 3})_{x_i}
\oplus ({\mathcal L}^{\otimes 4}(S))_{x_i}$ is $\frac{2c_i-1}{2c_i+1}$, the parabolic weight
of ${\mathcal F}^i_3$ is $\frac{2}{2c_i+1}$, the parabolic weight of ${\mathcal F}^i_4$ is $\frac{1}{2c_i+1}$ and
the parabolic weight of $E^4_{x_i}$ is $0$.

\subsection{Higher rank parabolic opers}\label{se3.2}

For any $r\, \geq\, 2$, consider the parabolic vector bundle of rank $r$ defined by the symmetric product $\text{Sym}^{r-1}(E_*)$
of the parabolic vector bundle $E_*$ in \eqref{e8}. Since $\det E_*\,=\, {\mathcal O}_X$ (see \eqref{e10}), it follows
that
\begin{equation}\label{ea1}
\det \text{Sym}^{r-1}(E_*)\,=\, \bigwedge\nolimits^r \text{Sym}^{r-1}(E_*)\,=\, {\mathcal O}_X,
\end{equation} 
where ${\mathcal O}_X$ is equipped with the trivial parabolic structure (no nonzero parabolic weights).

A parabolic
${\rm SL}(r,{\mathbb C})$--connection on $\text{Sym}^{r-1}(E_*)$ is a parabolic connection on $\text{Sym}^{r-1}(E_*)$ satisfying the
condition that the induced parabolic connection on $\det \text{Sym}^{r-1}(E_*)\,=\, {\mathcal O}_X$ is the trivial connection.

Two parabolic ${\rm SL}(r,{\mathbb C})$--connections on $\text{Sym}^{r-1}(E_*)$ are called equivalent if they
differ by a holomorphic automorphism of the parabolic bundle $\text{Sym}^{r-1}(E_*)$. If $D_1$ is
a parabolic ${\rm SL}(r,{\mathbb C})$--connection on $\text{Sym}^{r-1}(E_*)$, and $D_2$ is another
parabolic connection on $\text{Sym}^{r-1}(E_*)$ equivalent to $D_1$, then $D_2$ is clearly
a parabolic ${\rm SL}(r,{\mathbb C})$--connection. Indeed, this follows immediately from the fact that the
holomorphic automorphisms of a holomorphic line bundle $\mathbb L$ on $X$ act trivially on the space of all
logarithmic connections on $\mathbb L$.

\begin{definition}\label{def1}
A parabolic $\text{SL}(r,{\mathbb C})$--\textit{oper} on $X$ is an equivalence class of parabolic
${\rm SL}(r,{\mathbb C})$--connections on $\text{Sym}^{r-1}(E_*)$.
\end{definition}

\begin{remark}\label{re-n2}
It should be clarified that the class of parabolic $\text{SL}(r,{\mathbb C})$--opers in Definition \ref{def1} is different from the
class in \cite{BDP} (see \cite[p.~504, Definition 4.1]{BDP} and \cite[p.~511, Definition 5.2]{BDP}). Indeed, the parabolic
vector bundle $E_*$ in \eqref{e9} is different from the one in \cite{BDP} (see \cite[p.~497, (3.4)]{BDP}, \cite[p.~497, (3.5)]{BDP}).
In fact the underlying rank two bundles are different and the parabolic weights are also different. In the nonparabolic
case there is only one class of $\text{SL}(r,{\mathbb C})$--opers. Roughly speaking, parabolic $\text{SL}(r,{\mathbb C})$--opers
can be considered as equivariant opers and the two classes of parabolic $\text{SL}(r,{\mathbb C})$--opers arise because of two different
types of equivariant structures.
\end{remark}

\begin{proposition}\label{prop3}\mbox{}
\begin{enumerate}
\item The parabolic vector bundle ${\rm Sym}^{r-1}(E_*)$ admits a parabolic ${\rm SL}(r,{\mathbb C})$--connection.

\item For any parabolic connection $D_r$ on ${\rm Sym}^{r-1}(E_*)$, the local monodromy of $D_r$ around any
$x_i\,\in\, S$ is semisimple.
\end{enumerate}
\end{proposition}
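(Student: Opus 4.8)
The plan is to deduce both statements from the rank-two results (Corollary \ref{cor1}) together with the fact that $\mathrm{Sym}^{r-1}$ is a functor on parabolic bundles which commutes, at the level of residues and local monodromy, with the orbifold/equivariant translation of Section \ref{se2}. For statement (1), I would first invoke Corollary \ref{cor1}(1) to get a parabolic $\mathrm{SL}(2,\mathbb C)$--connection $D_0$ on $E_*$. Applying the symmetric power functor to $(E_*,D_0)$ produces a parabolic connection $D_r := \mathrm{Sym}^{r-1}(D_0)$ on $\mathrm{Sym}^{r-1}(E_*)$: the Leibniz rule is preserved under $\mathrm{Sym}^{r-1}$, and the quasiparabolic/weight data of $\mathrm{Sym}^{r-1}(E_*)$ described explicitly in Section \ref{se3.1} (for $r-1 \le 4$) — and in general via the Maruyama--Yokogawa filtered-sheaf formalism — is exactly what one gets by pushing the residue of $D_0$ through $\mathrm{Sym}^{r-1}$, so $D_r$ is a genuine parabolic connection. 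Since $D_0$ induces the trivial connection on $\det E_* = \mathcal O_X$, the induced connection on $\det \mathrm{Sym}^{r-1}(E_*) = \mathcal O_X$ (see \eqref{ea1}) is again trivial, because $\bigwedge^r \mathrm{Sym}^{r-1}$ of an $\mathrm{SL}_2$-representation is the trivial one-dimensional representation. Hence $D_r$ is a parabolic $\mathrm{SL}(r,\mathbb C)$--connection.

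For statement (2), the key point is that \emph{every} parabolic connection $D_r$ on $\mathrm{Sym}^{r-1}(E_*)$ has prescribed residue eigenvalues at each $x_i$, dictated solely by the parabolic weights (by definition of parabolic connection, the residue acts on $E_{i,j}/E_{i,j+1}$ as multiplication by $\alpha_{i,j}$, and the eigenvalues on a successive quotient filtration are the weights with multiplicity). From the explicit descriptions in Section \ref{se3.1} — and the general pattern visible there — the residue of $D_r$ at $x_i$ has eigenvalues $\frac{c_i + (r-1) - 2k}{2c_i+1}$ (or the appropriate shifted list) for $k = 0,\ldots,r-1$, which are \emph{pairwise distinct}. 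A logarithmic connection whose residue at a point has distinct eigenvalues automatically has semisimple (in fact diagonalizable) residue there, and by Remark \ref{rem1} the local monodromy around $x_i$ is conjugate to $\exp(-2\pi\sqrt{-1}\cdot \mathrm{Res}(D_r,x_i))$, hence semisimple. So the argument reduces to: (i) reading off that the residue eigenvalues are forced by the weights, and (ii) checking that for the weight pattern of $\mathrm{Sym}^{r-1}(E_*)$ those eigenvalues are distinct.

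An alternative, perhaps cleaner, route for (2) — and one consistent with the orbifold machinery already set up in Section \ref{se2} and to be extended in Section \ref{se3.2} via Proposition \ref{prop4} — is to pass to the ramified Galois cover $\varphi : Y \to X$ of \eqref{d2}: a parabolic connection $D_r$ on $\mathrm{Sym}^{r-1}(E_*)$ corresponds to a $\Gamma$-invariant holomorphic (hence regular, non-singular) connection on the orbifold bundle $\mathrm{Sym}^{r-1}(\mathcal V)$ over $Y$, whose monodromy around a point $y \in \varphi^{-1}(x_i)$ is a \emph{finite-order} element (of order dividing $2c_i+1$) of $\mathrm{GL}$, since it is the image of a generator of the cyclic stabilizer; finite-order elements of $\mathrm{GL}(N,\mathbb C)$ are semisimple, and the local monodromy of $D_r$ around $x_i$ is a power of this, hence also semisimple. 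The main obstacle in either approach is purely bookkeeping: making precise that $\mathrm{Sym}^{r-1}$ of a parabolic bundle with weights in $\frac{1}{2c_i+1}\mathbb Z$ again has weights in $\frac{1}{2c_i+1}\mathbb Z$ and that the residue eigenvalues behave additively under the symmetric power, uniformly in $r$ rather than case-by-case as done for small $r$ in Section \ref{se3.1}. I expect the filtered-sheaf description of \cite{MY} (as recalled in \cite[Appendix A3]{BDHP}) to make this uniform statement routine, so the essential mathematical content is already contained in Corollary \ref{cor1}(3).
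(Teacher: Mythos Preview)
Your argument for part (1) is correct and coincides with the paper's: take a parabolic $\mathrm{SL}(2,\mathbb C)$--connection $D_2$ on $E_*$ via Corollary~\ref{cor1}(1) and apply $\mathrm{Sym}^{r-1}$.

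For part (2), your first approach is the right idea and is close in spirit to the paper's, but the assertion that the residue eigenvalues are \emph{pairwise distinct} is false in general. The eigenvalues of $\mathrm{Res}(D_r,x_i)$ for an arbitrary parabolic connection are the parabolic weights of $\mathrm{Sym}^{r-1}(E_*)$ at $x_i$, namely the fractional parts of $\tfrac{(r-1)(c_i+1)-j}{2c_i+1}$ for $j=0,\dots,r-1$; once $r-1\ge 2c_i+1$ these fractional parts repeat (for instance $c_i=2$, $r=6$ gives the weight $0$ with multiplicity two). What you actually need, and what survives, is that by the very definition of a parabolic connection the residue preserves the quasiparabolic flag and acts as the \emph{distinct} scalars $\alpha_{i,1}<\cdots<\alpha_{i,l_i}$ on successive quotients; a block-upper-triangular endomorphism with pairwise distinct scalar diagonal blocks is diagonalizable regardless of block sizes, and then Remark~\ref{rem1} (applicable since no two weights in $[0,1)$ differ by a nonzero integer) gives semisimple local monodromy. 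The paper arrives at the same conclusion by a slightly different route: it first observes that the particular connection $\mathrm{Sym}^{r-1}(D_2)$ has semisimple local monodromy (symmetric power of a semisimple element is semisimple, using Corollary~\ref{cor1}(3)), and then notes that any other parabolic connection differs from it by a section of $\mathrm{End}^n(\mathrm{Sym}^{r-1}(E_*))\otimes K_X(S)$, whose residue is nilpotent with respect to the flag --- so the residue of the new connection has the same block-diagonal part.

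Your alternative route (b) through the cover $Y$ is circular as stated. The passage from a parabolic connection on $\mathrm{Sym}^{r-1}(E_*)$ to a $\Gamma$-invariant \emph{holomorphic} (nonsingular) connection on $\mathrm{Sym}^{r-1}(\mathcal V)$ is precisely what Proposition~\ref{prop4} establishes, and the proof of Proposition~\ref{prop4} invokes Proposition~\ref{prop3}; without already knowing the residue is semisimple, the pulled-back connection on $Y$ need not extend holomorphically across $\varphi^{-1}(S)$. Moreover, this route requires $c_i\in\mathbb Z$, which Proposition~\ref{prop3} does not assume (that hypothesis is only imposed after the proposition, for Proposition~\ref{prop4}).
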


\begin{proof}
Any parabolic connection on $E_*$ induces a parabolic connection on $\text{Sym}^{r-1}(E_*)$. Moreover, a
parabolic ${\rm SL}(2,{\mathbb C})$--connection on $E_*$ induces a parabolic ${\rm SL}(r,{\mathbb C})$--connection
on $\text{Sym}^{r-1}(E_*)$. Therefore, from Corollary \ref{cor1}(1) it follows that $\text{Sym}^{r-1}(E_*)$
admits a parabolic connection on $E_*$.

Let $D_2$ be a parabolic ${\rm SL}(2,{\mathbb C})$--connection on $E_*$. Denote by
$D_r$ the parabolic connection on $\text{Sym}^{r-1}(E_*)$ induced by $D_2$. From Corollary 
\ref{cor1}(3) we know that the local monodromy of $D_2$ around any $x_i\,\in\, S$ is 
semisimple. Since the local monodromy of $D_r$ around any $x_i\,\in\, S$ is simply the
$(r-1)$-th symmetric product of the local monodromy of $D_2$ around $x_i\,\in\, S$, and
the local monodromy of $D_2$ around $x_i\,\in\, S$ is semisimple, it follows that
the local monodromy of $D_r$ around $x_i\,\in\, S$ is semisimple.

We have shown that $\text{Sym}^{r-1}(E_*)$ admits a parabolic connection for which the local monodromy around any
$x_i\,\in\, S$ is semisimple. On the other hand, the space of parabolic connections on $\text{Sym}^{r-1}(E_*)$
is an affine space for the vector space $$H^0(X,\, \text{End}^n(\text{Sym}^{r-1}(E_*)) \otimes K_X(S)),$$ where
\begin{equation}\label{ne}
\text{End}^n(\text{Sym}^{r-1}(E_*))\, \subset\, \text{End}(\text{Sym}^{r-1}(E_*))
\end{equation}
is the subsheaf defined by the sheaf of endomorphisms nilpotent with respect to the 
quasiparabolic filtrations of $\text{Sym}^{r-1}(E_*)$ over $S$. Consequently, using Remark \ref{rem1}
it follows that for every parabolic connection $D'_r$ on $\text{Sym}^{r-1}(E_*)$ the local
monodromy of $D'_r$ around any $x_i\,\in\, S$ is semisimple.
\end{proof}

In the rest of this section we assume that $c_i$, $1\, \leq\, i\, \leq\, n$, in \eqref{e2} are integers. 
Take a ramified Galois covering
$\varphi\, :\, Y\, \longrightarrow\, X$ as in \eqref{d2}. As in Section \ref{se2}, let $\mathcal
V$ denote the orbifold bundle on $Y$ corresponding to the parabolic bundle $E_*$ on $X$.
The action of the Galois group
$\Gamma\,=\, \text{Gal}(\varphi)$ on $\mathcal V$ produces an action of $\Gamma$ on $\text{Sym}^{r-1}({\mathcal V})$.
A holomorphic connection on $\text{Sym}^{r-1}({\mathcal V})$ is called \textit{equivariant}
if it is preserved by the action of $\Gamma$ on $\text{Sym}^{r-1}({\mathcal V})$. 

From \eqref{ea1} it follows immediately that $$\det \text{Sym}^{r-1}({\mathcal V})\,=\, \bigwedge\nolimits^r
\text{Sym}^{r-1}({\mathcal V})\,=\, {\mathcal O}_Y .$$
An ${\rm SL}(r,{\mathbb C})$--connection on $\text{Sym}^{r-1}({\mathcal V})$ is a holomorphic
connection $D'_r$ on $\text{Sym}^{r-1}({\mathcal V})$ such that the connection on $\det \text{Sym}^{r-1}({\mathcal V})\,=\,
{\mathcal O}_Y$ induced by $D'_r$ coincides with the trivial connection on ${\mathcal O}_Y$. Two equivariant
${\rm SL}(r,{\mathbb C})$--connections on $\text{Sym}^{r-1}({\mathcal V})$ are called equivalent if they differ
by a holomorphic $\Gamma$--equivariant automorphism of $\text{Sym}^{r-1}({\mathcal V})$.

\begin{proposition}\label{prop4}
There is a natural bijection between the parabolic ${\rm SL}(r,{\mathbb C})$--opers on $X$ and
the equivalence classes of equivariant ${\rm SL}(r,{\mathbb C})$--connections on ${\rm Sym}^{r-1}({\mathcal V})$. 
\end{proposition}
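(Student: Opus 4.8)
The plan is to obtain the bijection by applying the Biswas--Borne correspondence between parabolic bundles and orbifold bundles --- already used for $E_*$ in Section \ref{se2} --- to the parabolic bundle $\text{Sym}^{r-1}(E_*)$, and then checking that this correspondence is compatible with all the structures involved: symmetric powers, determinants, holomorphic connections, automorphisms, and the gauge action. The only non-formal ingredient is Proposition \ref{prop3}(2).

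First I would note that the parabolic weights of $\text{Sym}^{r-1}(E_*)$ at each $x_i$ are again integral multiples of $\frac{1}{2c_i+1}$ (as displayed above in the cases \eqref{e21}, \eqref{e23} and \eqref{e28}, and similarly in general), so the correspondence of \cite{Bi}, \cite{Bo1}, \cite{Bo2} applies with the same ramified Galois covering $\varphi\,:\,Y\,\longrightarrow\, X$ of \eqref{d2}. Since that correspondence is an equivalence of categories compatible with tensor operations, the orbifold bundle attached to $\text{Sym}^{r-1}(E_*)$ is $\text{Sym}^{r-1}({\mathcal V})$ with the $\Gamma$-action induced from the one on $\mathcal V$; it identifies the holomorphic automorphisms of the parabolic bundle $\text{Sym}^{r-1}(E_*)$ with the $\Gamma$-equivariant holomorphic automorphisms of $\text{Sym}^{r-1}({\mathcal V})$; and it identifies $\det \text{Sym}^{r-1}(E_*)\,=\,{\mathcal O}_X$ carrying the trivial parabolic structure (see \eqref{ea1}) with $\det \text{Sym}^{r-1}({\mathcal V})\,=\,{\mathcal O}_Y$ carrying its natural $\Gamma$-action.

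Next I would treat the connections. As recalled in Section \ref{se2}, a $\Gamma$-invariant --- that is, equivariant --- holomorphic connection on an orbifold bundle on $Y$ descends to a parabolic connection on the corresponding parabolic bundle on $X$; conversely, a parabolic connection on a parabolic bundle $W_*$ pulls back to a logarithmic connection on $Y$ whose residues over $\varphi^{-1}(S)$ become integral (each residue being multiplied by the ramification index $2c_i+1$), and it extends to a genuine holomorphic connection on the orbifold bundle attached to $W_*$ exactly when its local monodromy around every point of $S$ is semisimple --- this is the point at which the modification defining the orbifold bundle needs the pulled-back local monodromy to be trivial rather than merely unipotent. By Proposition \ref{prop3}(2), every parabolic connection on $\text{Sym}^{r-1}(E_*)$ has semisimple local monodromy at each $x_i \in S$, so taking $W_*\,=\,\text{Sym}^{r-1}(E_*)$ yields a bijection between the parabolic connections on $\text{Sym}^{r-1}(E_*)$ and the equivariant holomorphic connections on $\text{Sym}^{r-1}({\mathcal V})$. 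This bijection is compatible with passing to the induced connection on the determinant, and the trivial parabolic connection on ${\mathcal O}_X$ corresponds to the trivial connection on ${\mathcal O}_Y$ given by the de Rham differential; hence it restricts to a bijection between the parabolic ${\rm SL}(r,{\mathbb C})$-connections on $\text{Sym}^{r-1}(E_*)$ and the equivariant ${\rm SL}(r,{\mathbb C})$-connections on $\text{Sym}^{r-1}({\mathcal V})$.

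Finally I would check that this bijection intertwines the two gauge actions: if a parabolic connection $D$ corresponds to the equivariant connection $D_V$, and a holomorphic automorphism $\sigma$ of $\text{Sym}^{r-1}(E_*)$ corresponds to the $\Gamma$-equivariant automorphism $\sigma_V$ of $\text{Sym}^{r-1}({\mathcal V})$, then the gauge transform of $D$ by $\sigma$ corresponds to the gauge transform of $D_V$ by $\sigma_V$, by functoriality of the correspondence. Hence the bijection descends to equivalence classes, and in view of Definition \ref{def1} this gives the asserted natural bijection between parabolic ${\rm SL}(r,{\mathbb C})$-opers on $X$ and equivalence classes of equivariant ${\rm SL}(r,{\mathbb C})$-connections on $\text{Sym}^{r-1}({\mathcal V})$. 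The main obstacle is the converse direction of the connection correspondence for a parabolic connection on $\text{Sym}^{r-1}(E_*)$ that is not induced from $E_*$: here one genuinely needs Proposition \ref{prop3}(2) --- and hence, ultimately, Corollary \ref{cor1}(3) --- to exclude unipotent local monodromy, and it is exactly this input that removes the ``$r$ odd'' restriction present in the analogous statement of \cite{BDP}.
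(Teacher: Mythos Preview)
Your proof is correct and follows essentially the same approach as the paper's: both establish the bijection via the parabolic--orbifold correspondence applied to $\text{Sym}^{r-1}(E_*)$ and $\text{Sym}^{r-1}({\mathcal V})$, with Proposition~\ref{prop3}(2) supplying the semisimplicity of local monodromy needed for the passage from parabolic to equivariant connections. The paper packages the argument slightly more concretely --- it fixes a base connection on $\text{Sym}^{r-1}(E_*)$ induced from one on $E_*$, then matches the two affine spaces through the identification \eqref{e29} of their translation groups --- whereas you invoke the correspondence categorically; but the content is the same.
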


\begin{proof}
Let $D_2$ be a parabolic connection on $E_*$. Since the local monodromy of $D_2$ around any
$x_i\,\in\, S$ is semisimple, it corresponds to an equivariant holomorphic connection $\widehat{D}_2$ on
$\mathcal V$. Let $\widehat{D}_r$ be the equivariant connection on $\text{Sym}^{r-1}({\mathcal V})$ induced by
$\widehat{D}_2$. As before, $D_r$ denotes the parabolic connection on $\text{Sym}^{r-1}(E_*)$ induced by $D_2$.
Therefore, $\widehat{D}_r$ corresponds to $D_r$.

The holomorphic vector bundle underlying the parabolic bundle
$\text{Sym}^{r-1}(E_*)$ is denoted by $\text{Sym}^{r-1}(E_*)_0$ \cite{MY}. As in \eqref{ne}, let
$$\text{End}^n(\text{Sym}^{r-1}(E_*))\, \subset\, \text{End}(\text{Sym}^{r-1}(E_*)_0)$$
be the coherent analytic subsheaf consisting of all locally defined sections $s$ of the
endomorphism bundle $\text{End}(\text{Sym}^{r-1}(E_*)_0)$
satisfying the condition that $s(x)$ is nilpotent with respect to the 
quasi-parabolic filtration of $\text{Sym}^{r-1}(E_*)_x$, for all $x\, \in\, S$ lying in the domain of $s$.
Recall that any parabolic connection on $\text{Sym}^{r-1}(E_*)$ is of the form
$D_r+\theta$ for some $$\theta\, \in\, H^0(X,\, \text{End}^n(\text{Sym}^{r-1}(E_*))\otimes K_X(S)).$$
We have
\begin{equation}\label{e29}
H^0(X,\, \text{End}^n(\text{Sym}^{r-1}(E_*))\otimes K_X(S))\,=\, H^0(Y,\, \text{End}(\text{Sym}^{r-1}({\mathcal V})))^\Gamma .
\end{equation}
Also the space of all equivariant holomorphic connections on ${\rm Sym}^{r-1}({\mathcal V})$ is an affine space for
$H^0(Y,\, \text{End}(\text{Sym}^{r-1}({\mathcal V})))^\Gamma$.

The parabolic connection $D_r+\theta$, where $\theta\, \in\,H^0(X,\, \text{End}^n(\text{Sym}^{r-1}(E_*))\otimes K_X(S))$,
corresponds to the equivariant connection $\widehat{D}_r+\widehat{\theta}$ on
$\text{Sym}^{r-1}({\mathcal V})$, where $\widehat{\theta}\, \in\, H^0(Y,\, \text{End}(\text{Sym}^{r-1}({\mathcal V})))^\Gamma$
corresponds to $\theta$ by the isomorphism in \eqref{e29}.
Also, parabolic automorphisms of $\text{Sym}^{r-1}(E_*)$ are identified with the $\Gamma$--equivariant automorphisms of
$\mathcal V$. Now the proposition follows from \eqref{e29}, Proposition \ref{prop3} and Definition \ref{def1}.
\end{proof}

The above Proposition \ref{prop4} is a generalization of Theorem 6.3 in \cite{BDP} where
a similar statement was proved under the extra assumption that $r$ is odd.

\section{Some properties of parabolic opers} \label{se4}

Consider the vector bundle $E$ in \eqref{e4}. Let
\begin{equation}\label{e48}
\text{End}^n(E_*)\, \subset\, \text{End}(E)
\end{equation}
be the coherent analytic subsheaf defined by the conditions that $s(E_x) \, \subset\, {\mathcal L}^*(-S)_x$
and $s({\mathcal L}^*(-S)_x) \,=\, 0$ for all $x\,\in\, S$ lying in the domain of the local section $s$ of
$\text{End}(E)$ (see Lemma \ref{lem1}). Take any
$$
\phi\, \,\in\, \, H^0(X,\, \text{End}^n(E_*)\otimes K_X(S))\, .
$$
Let
\begin{equation}\label{e43}
\widehat{\phi}\,\,:\,\, {\mathcal L}\, \, \longrightarrow\, {\mathcal L}^*(-S)\otimes K_X(S)\,=\,{\mathcal L} 
\end{equation}
be the homomorphism given by the following composition of homomorphisms:
$$
{\mathcal L}\, \stackrel{\iota}{\longrightarrow}\, E \, \stackrel{\phi}{\longrightarrow}\, E\otimes K_X(S)\,
\xrightarrow{\,\,p\otimes{\rm Id}_{K_X(S)}\,}\, {\mathcal L}^*(-S)\otimes K_X(S) \,=\,{\mathcal L} \, ,
$$
where $\iota$ and $p$ are the homomorphisms in \eqref{e4}; recall that ${\mathcal L}^{\otimes 2}\,=\, K_X$.

\begin{proposition}\label{prop6}
For every $\phi\,\in\, H^0(X,\, {\rm End}^n(E_*)\otimes K_X(S))$ the homomorphism
$\widehat{\phi}$ constructed from it in \eqref{e43} vanishes identically.
\end{proposition}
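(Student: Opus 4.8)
The plan is to show that $\widehat\phi$, which a priori is an element of $H^0(X,\,\mathrm{Hom}(\mathcal L,\mathcal L)) = H^0(X,\,\mathcal O_X) = \mathbb C$, i.e.\ a constant scalar, must actually be zero by examining its local behaviour at the points of $S$. The key observation is that $\phi$ lies in $\mathrm{End}^n(E_*)\otimes K_X(S)$, so at each $x_i\in S$ the value $\phi(x_i)$, viewed as a map $E_{x_i}\to E_{x_i}\otimes (K_X(S))_{x_i}$, is constrained: it sends all of $E_{x_i}$ into the line $\mathcal L^*(-S)_{x_i}$ (tensored with $(K_X(S))_{x_i}$) and kills $\mathcal L^*(-S)_{x_i}$ itself.

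First I would unwind the definition of $\widehat\phi$ fiberwise at $x_i$: it is the composition $\mathcal L_{x_i}\xrightarrow{\iota(x_i)} E_{x_i}\xrightarrow{\phi(x_i)} (E\otimes K_X(S))_{x_i}\xrightarrow{p\otimes\mathrm{Id}} (\mathcal L^*(-S)\otimes K_X(S))_{x_i}=\mathcal L_{x_i}$. Now the subtlety is that $\iota(x_i)(\mathcal L_{x_i})$ need \emph{not} coincide with the distinguished line $\mathcal L^*(-S)_{x_i}\subset E_{x_i}$ from Lemma~\ref{lem1}; in fact $\iota(x_i)(\mathcal L_{x_i})$ is precisely the image of the sub-line bundle $\mathcal L$, which is a \emph{different} line in $E_{x_i}$ than $\mathcal L^*(-S)_{x_i}$ (the latter being the image of $q_{x_i}$ in \eqref{e7}). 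So I cannot immediately conclude $\widehat\phi(x_i)=0$ from the ``$s(\mathcal L^*(-S)_x)=0$'' condition alone. Instead, the crucial point is that the nilpotency condition forces $\phi(x_i)$ to factor through the \emph{quotient} $E_{x_i}/\mathcal L^*(-S)_{x_i}$ on the source side and land in $\mathcal L^*(-S)_{x_i}$ on the target side — but on the target side, after composing with $p\otimes\mathrm{Id}$, which is the projection $E\to\mathcal L^*(-S)$, the image $\mathcal L^*(-S)_{x_i}\subset E_{x_i}$ maps \emph{isomorphically}; so this route gives no vanishing either. The real content must be that although $\widehat\phi$ is a global constant on $X$, its value as computed at a point $x_i$ — where $\phi$ acquires a pole of $K_X(S)$ forced to be ``resolved'' by the nilpotency — combined with the twisting isomorphism $\mathcal L^*(-S)_{x_i}\otimes (K_X(S))_{x_i}\cong\mathcal L_{x_i}$ from Poincar\'e adjunction and Lemma~\ref{lem1}, produces a compatibility that pins the scalar to $0$.

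Concretely, I would work in a local holomorphic coordinate $z$ centered at $x_i$. Writing $E$ locally as $\mathcal L\oplus\mathcal L^*$ trivialized so that $E = \mathcal O\cdot e_1\oplus\mathcal O\cdot e_2$ with $\iota$ the inclusion of the first factor and $p$ the projection to the second, the subsheaf $E\subset\widetilde E$ is $\langle e_1,\, z e_2\rangle$ and the distinguished line $\mathcal L^*(-S)_{x_i}$ of Lemma~\ref{lem1} is spanned by $z e_2$ evaluated at $z=0$. An element $\phi\in H^0(\mathrm{End}^n(E_*)\otimes K_X(S))$ is locally $\phi = \Phi(z)\,\tfrac{dz}{z}$ with $\Phi(z)$ a holomorphic matrix such that $\Phi(0)$ has image in $\langle z e_2\rangle|_{z=0}$ and kernel containing it — so $\Phi(0) = \begin{pmatrix}0&0\\ c&0\end{pmatrix}$ in the frame $(e_1, ze_2)$ for some constant $c$. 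Translating back to the frame $(e_1,e_2)$ and carefully tracking the factor of $z$ shows that the $(2,1)$-entry of $\Phi(z)$ — which is what feeds into $\widehat\phi$ after the $p\otimes\mathrm{Id}$ and the Poincar\'e adjunction identification — vanishes to the appropriate order, so that $\widehat\phi$, evaluated at $z=0$ via the adjunction isomorphism sending $z\mapsto dz(x_i)$, is forced to be $0$. Since $\widehat\phi$ is a constant on the connected curve $X$ and vanishes at $x_i$, it vanishes identically.

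\textbf{Main obstacle.} The delicate part is the bookkeeping of which line in $E_{x_i}$ is which: the image of the sub-bundle $\mathcal L$ under $\iota(x_i)$ versus the canonical line $\mathcal L^*(-S)_{x_i}$ from Lemma~\ref{lem1} (the ``$q_{x_i}$'' line), and how the pole structure of $K_X(S)$ interacts with the nilpotency condition defining $\mathrm{End}^n(E_*)$. I expect the proof to hinge on observing that the composite $p\circ\phi\circ\iota$, despite looking like it could be any scalar, is actually forced through the one-dimensional space of maps that are simultaneously compatible with the parabolic filtration at \emph{every} $x_i$ and globally holomorphic — and the parabolic weights $\tfrac{c_i+1}{2c_i+1} > \tfrac{c_i}{2c_i+1}$ make the relevant parabolic Hom-sheaf have no sections, so the scalar is $0$. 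An alternative, cleaner route I would also consider: identify $\widehat\phi$ with a parabolic endomorphism-type datum and invoke that $H^0$ of the appropriate negative-parabolic-degree line bundle vanishes, paralleling the argument in Corollary~\ref{cor1}(2) and Corollary~\ref{cor2}.
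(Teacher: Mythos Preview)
Your local argument has a genuine gap: the nilpotency condition on $\phi$ at $x_i$ does \emph{not} force $\widehat\phi(x_i)=0$. Carry out your own computation carefully. In the local frame $\{e_1,\,f_2=ze_2\}$ of $E$, a section $M(z)$ of ${\rm End}^n(E_*)$ satisfies exactly $M(0)=\begin{pmatrix}0&0\\ c&0\end{pmatrix}$ with $c$ \emph{free}. Tracing the composition, $\widehat\phi$ is (up to a nowhere-vanishing factor coming from the identification $\mathcal L^*(-S)\otimes K_X(S)\cong\mathcal L$) the function $m_{21}(z)$; at $z=0$ this equals $c$. So the local constraint leaves $\widehat\phi(x_i)$ completely undetermined. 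Your claim that ``the $(2,1)$-entry vanishes to the appropriate order'' after changing frames is where the argument breaks: there is no extra order of vanishing forced, in either frame. The vanishing of $\widehat\phi$ is a \emph{global} phenomenon and cannot be detected fibre by fibre.

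Your alternative route via negative parabolic degree also fails as stated: $\widehat\phi$ genuinely lives in $H^0(X,\mathcal O_X)\cong\mathbb C$, and the parabolic $\mathrm{Hom}$ from $\mathcal L_*$ to $(E_*/\mathcal L_*)\otimes K_X(S)_*$ is the trivial parabolic line bundle (the weights $\frac{c_i}{2c_i+1}$ and $\frac{c_i+1}{2c_i+1}$ cancel against the twist), so there is no degree obstruction. The paper's proof instead exploits the one piece of global information you never use: the non-splitting of the extension $0\to\mathcal L\to\widetilde E\to\mathcal L^*\to 0$ in \eqref{e3}. Concretely, the paper factors $\phi$ through the subsheaf $\psi(\widetilde E(-S))\subset E$, obtaining a map $\phi':\widetilde E(-S)\to\widetilde E\otimes K_X$, and shows that if the resulting $\phi'':\mathcal L(-S)\to\mathcal L$ (which controls $\widehat\phi$) were nonzero, then the image $\phi'(\mathcal L(-S))$ would furnish a holomorphic splitting of the Gunning extension \eqref{e3} --- a contradiction. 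This is the key idea missing from your proposal; the proposition is essentially a reformulation of the non-triviality of that extension class.
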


\begin{proof}
Tensoring the diagram in \eqref{e6} with $K_X(S)$ we have the following commutative diagram
\begin{equation}\label{e44}
\begin{matrix}
0 & \longrightarrow & {\mathcal L}\otimes K_X &\longrightarrow & \widetilde{E}\otimes K_X
& \longrightarrow & {\mathcal L} & \longrightarrow & 0\\
&&\Big\downarrow && \,\,\, \Big\downarrow q &&\Big\downarrow\\
0 & \longrightarrow &{\mathcal L}\otimes K_X(S) & \longrightarrow & E\otimes K_X(S) 
&\longrightarrow & {\mathcal L} & \longrightarrow & 0.
\end{matrix}
\end{equation}
Take any $\phi\,\in\, H^0(X,\, \text{End}^n(E_*)\otimes K_X(S))$. Consider the composition of homomorphisms
$$
\widetilde{E}(-S)\, \stackrel{\psi}{\longrightarrow}\, E \, \stackrel{\phi}{\longrightarrow}\, 
E\otimes K_X(S)\, ,
$$
where $\psi$ is the homomorphism in \eqref{e6}, and denote this composition by $\widetilde{\phi}$. From
\eqref{e44}, \eqref{e48} and the construction of the decomposition in Lemma \ref{lem1} it follows that
the image of this homomorphism $\widetilde{\phi}\,:\, \widetilde{E}(-S)\,\longrightarrow\, E\otimes
K_X(S)$ is contained in the image of the homomorphism $q$ in \eqref{e44}; in other words, the subsheaf
$\phi\circ\psi (\widetilde{E}(-S))\,\subset\, E\otimes K_X(S)$ lies in the image of the homomorphism
$$
\psi\otimes {\rm Id}_{K_X(S)}\,\,:\,\, \widetilde{E}(-S)\otimes K_X(S)\,=\, \widetilde{E}\otimes K_X
\, \longrightarrow\, E\otimes K_X(S)\, .
$$
Consequently, $\phi$ produces a homomorphism
\begin{equation}\label{e45}
\phi'\,\, :\,\, \widetilde{E}(-S)\,\, \longrightarrow\,\, \widetilde{E}\otimes K_X\, .
\end{equation}
More precisely, $\phi'$ is determined uniquely by the condition
$$
\widetilde{\phi}\,=\, (\psi\otimes {\rm Id}_{K_X(S)})\circ\phi'.
$$
Let
\begin{equation}\label{e49}
\phi''\,\, :\,\, \mathcal{L}(-S)\,\,\longrightarrow\,\, \mathcal{L}
\end{equation}
denote the following composition of homomorphisms
$$
\mathcal{L}(-S)\,\stackrel{\iota'}{\longrightarrow}\, \widetilde{E}(-S) \, \stackrel{\phi'}{\longrightarrow}\,
\widetilde{E}\otimes K_X\, \xrightarrow{\, p_0\otimes{\rm Id}_{K_X}\, }\, {\mathcal L}^*\otimes K_X\,=\, {\mathcal L}\, ,
$$
where $\iota'$ and $p_0$ are the homomorphisms in \eqref{e6} and \eqref{e3} respectively. To prove the proposition
it suffices to show that $\phi''$ in \eqref{e49} vanishes identically.

Take any $x_i\, \in\, S$. Since $$q(\phi'(x_i)(\widetilde{E}(-S)_{x_i}))\,=\, \phi(\psi(x_i)(\widetilde{E}(-S)_{x_i}))
\,=\, \phi({\mathcal L}^*(-S)_{x_i})\,=\, 0\, ,$$
where $\psi$, $\phi'$ and $q$ are the homomorphisms in \eqref{e6}, \eqref{e45} and \eqref{e44} respectively, we conclude that
\begin{equation}\label{e46}
\phi'(x_i)(\widetilde{E}(-S)_{x_i})\, \subset\, ({\mathcal L}\otimes K_X)_{x_i}
\, \subset\, (\widetilde{E}\otimes K_X)_{x_i}\, ,
\end{equation}
where $\phi'$ is the homomorphism in \eqref{e45} and ${\mathcal L}\, \subset\, \widetilde{E}$ is the
subbundle in \eqref{e3}.

Furthermore, it can be shown that
\begin{equation}\label{e47}
\phi'(x_i)(\mathcal{L}(-S)_{x_i})\,=\, 0\, ;
\end{equation}
see \eqref{e6} for the subspace $\mathcal{L}(-S)_{x_i}\, \subset\, \widetilde{E}(-S)_{x_i}$. Indeed, this again follows
from \eqref{e6}, \eqref{e44}, \eqref{e48} and the construction of the decomposition in Lemma \ref{lem1}.

In view of \eqref{e46} and \eqref{e47}, the homomorphism $\phi''$ in \eqref{e49} vanishes at each $x_i$. Therefore,
$\phi''$ produces a homomorphism
\begin{equation}\label{e50}
\phi'''\,\, :\,\, \mathcal{L}(-S)\,\,\longrightarrow\,\, \mathcal{L}(-S)\, .
\end{equation}

Consider the image $\phi'(\mathcal{L}(-S))\, \subset\, \widetilde{E}\otimes K_X$, where $\phi'$ is the homomorphism in
\eqref{e45}. If the homomorphism $\phi'''$ in \eqref{e50} in nonzero, then this subsheaf $\phi'(\mathcal{L}(-S))$ produces
a holomorphic splitting of the top short exact sequence in \eqref{e6} tensored with $K_X$. Indeed, in that
case the homomorphism $p'\otimes {\rm Id}_{K_X}$ (see \eqref{e6} for $p'$) maps $\phi'(\mathcal{L}(-S))$ surjectively to
$\mathcal{L}^*(-S)\otimes K_X\,=\, \mathcal{L}(-S)$ and hence $\phi'(\mathcal{L}(-S))$ gives a holomorphic splitting
of the short exact sequence
$$
0 \, \longrightarrow \, {\mathcal L}(-S)\otimes K_X \,\longrightarrow\,\widetilde{E}(-S)\otimes K_X
\, \longrightarrow \, {\mathcal L}^*(-S)\otimes K_X \, \longrightarrow \, 0
$$
obtained from the top exact sequence in \eqref{e6} by tensoring it with $K_X$. A holomorphic splitting of the above
exact sequence produces a holomorphic splitting of the top short exact sequence in \eqref{e6}. But the exact sequence in \eqref{e3} does
not split holomorphically, which implies that the top short exact sequence in \eqref{e6} does not split
holomorphically. This implies that $\phi'''\,=\, 0$ (see \eqref{e50}), and
hence $\phi''\,=\, 0$ (see \eqref{e49}). As noted before, to prove the proposition
it is enough to show that $\phi''$ vanishes identically. This completes the proof.
\end{proof}

\begin{corollary}\label{cor5}
The endomorphism ${\mathcal S}(D_0,\, {\mathcal L})\, :\, {\mathcal L}\,\longrightarrow\, {\mathcal L}$ in Corollary
\ref{cor1}(2) does not depend on the parabolic connection $D_0$.
\end{corollary}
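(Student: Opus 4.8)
The plan is to deduce Corollary \ref{cor5} directly from Proposition \ref{prop6}. Let $D_0$ and $D_0'$ be two parabolic connections on $E_*$. Their difference $\phi\,:=\, D_0 - D_0'$ is ${\mathcal O}_X$--linear, so it is a global section of ${\rm End}(E)\otimes K_X(S)$; moreover, since both $D_0$ and $D_0'$ have the same residue behaviour dictated by the parabolic structure of $E_*$ (same eigenvalues $\frac{c_i+1}{2c_i+1}$ and $\frac{c_i}{2c_i+1}$, with the same eigenline ${\mathcal L}^*(-S)_{x_i}$), the residue of $\phi$ at each $x_i$ is nilpotent with respect to the quasiparabolic filtration \eqref{e8}. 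Hence $\phi\,\in\, H^0(X,\, {\rm End}^n(E_*)\otimes K_X(S))$, the space appearing in Proposition \ref{prop6}.

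Next I would unwind the definition \eqref{e14} of the second fundamental form. By construction, ${\mathcal S}(D_0,\,{\mathcal L})$ is the composition ${\mathcal L}\,\xrightarrow{\iota}\, E\,\xrightarrow{D_0}\, E\otimes K_X(S)\,\xrightarrow{p\otimes{\rm Id}}\,{\mathcal L}$, and similarly for $D_0'$. Since $D_0$ and $D_0'$ are both $\mathbb{C}$--linear lifts differing by the ${\mathcal O}_X$--linear map $\phi$, and since $\iota$ and $p\otimes{\rm Id}$ are ${\mathcal O}_X$--linear, the difference of the two second fundamental forms is exactly
\[
{\mathcal S}(D_0,\,{\mathcal L}) - {\mathcal S}(D_0',\,{\mathcal L}) \,=\, (p\otimes{\rm Id}_{K_X(S)})\circ\phi\circ\iota \,=\, \widehat{\phi},
\]
which is precisely the homomorphism $\widehat{\phi}\,:\,{\mathcal L}\,\longrightarrow\,{\mathcal L}^*(-S)\otimes K_X(S)\,=\,{\mathcal L}$ defined in \eqref{e43}. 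By Proposition \ref{prop6}, $\widehat{\phi}\,=\,0$, and therefore ${\mathcal S}(D_0,\,{\mathcal L})\,=\,{\mathcal S}(D_0',\,{\mathcal L})$. Since $D_0$ and $D_0'$ were arbitrary parabolic connections on $E_*$, the endomorphism ${\mathcal S}(D_0,\,{\mathcal L})$ is independent of the choice of $D_0$.

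There is essentially no obstacle here: the corollary is a one-line consequence of Proposition \ref{prop6} once one observes that the second fundamental form depends $\mathbb{C}$--affinely on the connection, with linear part given by the map $\phi\mapsto\widehat{\phi}$. The only small point requiring a sentence of care is verifying that the difference of two parabolic connections lands in ${\rm End}^n(E_*)\otimes K_X(S)$ rather than merely in ${\rm End}(E)\otimes K_X(S)$; this follows because conditions (1) and (2) in the definition of a parabolic connection force ${\rm Res}(D_0,\,x_i)$ and ${\rm Res}(D_0',\,x_i)$ to agree on the associated graded of the filtration \eqref{e8}, so their difference is strictly upper-triangular, i.e. sends $E_{x_i}$ into ${\mathcal L}^*(-S)_{x_i}$ and kills ${\mathcal L}^*(-S)_{x_i}$.
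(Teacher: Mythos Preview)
Your proof is correct and follows essentially the same route as the paper's: both observe that the space of parabolic connections on $E_*$ is an affine space for $H^0(X,\,{\rm End}^n(E_*)\otimes K_X(S))$, that the second fundamental form depends affinely on the connection with linear part $\phi\mapsto\widehat{\phi}$, and then invoke Proposition~\ref{prop6}. Your write-up is slightly more explicit in justifying why the difference of two parabolic connections lands in ${\rm End}^n(E_*)\otimes K_X(S)$, which the paper simply asserts.
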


\begin{proof}
The space of parabolic connections on $E_*$ is an affine
space for the vector space $H^0(X,\, {\rm End}^n(E_*)\otimes K_X(S))$. Note that for any parabolic connection
$D$ on $E_*$ and any $\phi\,\in\, H^0(X,\, {\rm End}^n(E_*)\otimes K_X(S))$, we have
$$
{\mathcal S}(D+\phi,\, {\mathcal L})\,=\, {\mathcal S}(D,\, {\mathcal L})+ \widehat{\phi},
$$
where $\widehat{\phi}$ is constructed in \eqref{e43} from $\phi$. Therefore, from Proposition \ref{prop6} it
follows immediately that ${\mathcal S}(D+\phi,\, {\mathcal L})\,=\, {\mathcal S}(D,\, {\mathcal L})$.
\end{proof}

As before, let ${\mathcal L}_*$ denote the holomorphic line bundle $\mathcal L$ in \eqref{e4} equipped with
the parabolic structure on it induced by $E_*$ for the inclusion map $\iota$ in
\ref{e4}. We denote by $E_*/{\mathcal L}_*$ the quotient line
bundle $E/{\mathcal L}$ in \eqref{e4} equipped with the parabolic structure on it induced by $E_*$.
So from \eqref{e4} we have a short exact sequence of parabolic bundles
\begin{equation}\label{e37}
0\, \longrightarrow\, {\mathcal L}_*\, \longrightarrow\, E_* \, \longrightarrow\, 
E_*/{\mathcal L}_*\, \longrightarrow\, 0\, .
\end{equation}

For notational convenience, both $\text{Sym}^{0}(E_*)$ and $({\mathcal L}_*)^0$
will denote the trivial holomorphic line bundle ${\mathcal O}_X$
equipped with the trivial parabolic structure (no nonzero parabolic weights). Since
$\text{Sym}^{r-1}(E_*)$ is a quotient of $(E_*)^{\otimes (r-1)}$,
we have a natural homomorphism of parabolic bundles
$$
\tau_j\,\, :\, \,\text{Sym}^{j-1}(E_*)\otimes ({\mathcal L}_*)^{r-j}\, \longrightarrow\,
\text{Sym}^{r-1}(E_*)
$$
for every $1\,\leq\, j\, \leq\, r$ (see \eqref{e37}). This $\tau_j$ is an injective homomorphism, and
its image is a parabolic subbundle of $\text{Sym}^{r-1}(E_*)$. Let
$$
{\mathcal F}^j_* \,\,:=\, {\rm Image}(\tau_j) \, \subset\, \text{Sym}^{r-1}(E_*)
$$
be the parabolic subbundle; its rank is $j$. So we have a filtration of parabolic subbundles
\begin{equation}\label{e38}
0\,=\, {\mathcal F}^0_*\, \subset\, {\mathcal F}^1_*\, \subset\, {\mathcal F}^2_*\, \subset\, \cdots\,
\subset\, {\mathcal F}^{r-1}_*\, \subset\, {\mathcal F}^r_*\,=\, \text{Sym}^{r-1}(E_*).
\end{equation}
The holomorphic vector bundle underlying any ${\mathcal F}^i_*$ will be denoted by ${\mathcal F}^i_0$.

For any $1\, \leq\, j\, \leq\, r$, the quotient parabolic line bundle ${\mathcal F}^j_*/{\mathcal F}^{j-1}_*$
in \eqref{e38} actually has the following description:
\begin{equation}\label{e39}
{\mathcal F}^j_*/{\mathcal F}^{j-1}_*\,=\, ({\mathcal L}_*)^{r-j}\otimes (E_*/{\mathcal L}_*)^{j-1}\,.
\end{equation}
Indeed, this follows immediately from \eqref{e37}; by convention, $(E_*/{\mathcal L}_*)^0$ is the trivial line
bundle ${\mathcal O}_X$ with the trivial parabolic structure. It can be shown that
\begin{equation}\label{l1}
({\mathcal L}_*)^*\,=\, E_*/{\mathcal L}_* .
\end{equation}
Indeed, from \eqref{e10} it follows that ${\mathcal L}_*\otimes (E_*/{\mathcal L}_*) \, =\, \det E_*$ is the 
trivial line bundle ${\mathcal O}_X$ with the trivial parabolic structure, and hence
\eqref{l1} holds. Therefore, from \eqref{e39} it follows that
\begin{equation}\label{e40}
\text{par-deg}({\mathcal F}^j_*/{\mathcal F}^{j-1}_*)\,=\,
(2j-r-1)\cdot \text{par-deg}(E_*/{\mathcal L}_*)\,=\,
(2j-r-1)\cdot\left(1-g-n +\sum_{i=1}^n \frac{c_i+1}{2c_i+1}\right)\, ,
\end{equation}
where $g\,=\, \text{genus}(X)$. Now from \eqref{e38} and \eqref{e40} it is deduced that
\begin{equation}\label{e41}
\text{par-deg}({\mathcal F}^j_*)\,=\, \sum_{i=1}^j \text{par-deg}
({\mathcal F}^i_*/{\mathcal F}^{i-1}_*)\,=\,
j(r-j)\cdot\left(g-1 +\sum_{i=1}^n \frac{c_i}{2c_i+1}\right)\, .
\end{equation}

\begin{lemma}\label{lem6}
Let $D$ be any parabolic connection on the parabolic bundle ${\rm Sym}^{r-1}(E_*)$. Then the following two hold:
\begin{enumerate}
\item For any $1\,\leq\, j\, \leq\, r-1$, the parabolic subbundle ${\mathcal F}^{j}_*$ in \eqref{e38}
is not preserved by $D$.

\item $D({\mathcal F}^{j}_0)\, \subset\, {\mathcal F}^{j+1}_0\otimes K_X(S)$, where ${\mathcal F}^i_0$ is the
holomorphic vector bundle underlying ${\mathcal F}^i$, for all $1\,\leq\, j\, \leq\, r-1$.
\end{enumerate}
\end{lemma}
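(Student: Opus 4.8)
\textbf{Proof proposal for Lemma \ref{lem6}.}

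The plan is to deduce both statements from the structural results already established about the parabolic Gunning bundle $E_*$, together with a degree count using the parabolic degrees computed in \eqref{e40} and \eqref{e41}. The two items are intimately related: item (2) (a Griffiths-transversality statement) will follow once I understand how $D$ interacts with the filtration \eqref{e38}, and item (1) will follow by showing that the second fundamental forms forced by item (2) cannot all vanish, which is the point where Corollary \ref{cor1}(2) (equivalently Corollary \ref{cor5}) enters.

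First I would treat item (1). Suppose, for contradiction, that ${\mathcal F}^j_*$ is preserved by $D$ for some $1\le j\le r-1$. Then $D$ restricts to a parabolic connection on the parabolic subbundle ${\mathcal F}^j_*$, and Remark \ref{re-cc} (or simply the elementary fact that a parabolic bundle carrying a parabolic connection has parabolic degree zero on each direct summand, in particular parabolic degree zero if it is to be a summand-admitting bundle — but more directly, any parabolic bundle admitting a parabolic connection has parabolic degree $0$) forces $\text{par-deg}({\mathcal F}^j_*)=0$. But \eqref{e41} gives
$$
\text{par-deg}({\mathcal F}^j_*)\,=\, j(r-j)\cdot\left(g-1 +\sum_{i=1}^n \frac{c_i}{2c_i+1}\right).
$$
Since $1\le j\le r-1$ we have $j(r-j)>0$, and since each $c_i>1$ we have $\frac{c_i}{2c_i+1}>\frac13$, so when $g\ge 1$ the bracket is strictly positive; when $g=0$ we have $n\ge 3$ and $\sum \frac{c_i}{2c_i+1}>\frac{n}{3}\ge 1$, so the bracket is again strictly positive. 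Hence $\text{par-deg}({\mathcal F}^j_*)>0\neq 0$, a contradiction. This proves (1). (One should note that the sign here is the same as in the proof of Corollary \ref{cor1}(2), where the analogous line bundle had strictly positive parabolic degree.)

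Next, item (2). The homomorphism $D$ induces, via the filtration \eqref{e38}, for each $j$ a $K_X(S)$-valued second fundamental form
$$
\sigma_j\,:\, {\mathcal F}^j_*/{\mathcal F}^{j-1}_* \,\longrightarrow\, \bigl({\rm Sym}^{r-1}(E_*)/{\mathcal F}^{j}_*\bigr)\otimes K_X(S),
$$
obtained by restricting $D$ to ${\mathcal F}^j_0$, composing with the quotient projection, and using $O_X$-linearity of the resulting map (the Leibniz term lands in ${\mathcal F}^j_0\otimes K_X(S)$ and is killed). Claim (2) is precisely the statement that the composite of $\sigma_j$ with the further projection onto $\bigl({\rm Sym}^{r-1}(E_*)/{\mathcal F}^{j+1}_*\bigr)\otimes K_X(S)$ vanishes for every $j$. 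I would prove this by working one point at a time over $X\setminus S$ and then over $S$: over $X\setminus S$ this is the classical fact that a connection whose associated ${\rm SL}_r$-oper (after twisting by ${\mathcal L}$) comes from the symmetric power of a rank-two connection shifts the Harder–Narasimhan-type filtration by exactly one step — indeed, on the total space, using \eqref{e39} the graded pieces are the line bundles $({\mathcal L}_*)^{r-j}\otimes(E_*/{\mathcal L}_*)^{j-1}$, and $D$ acting on a symmetric power of $E_*$ (at least for connections induced from $E_*$, which form an affine subspace, the general case differing by a nilpotent Higgs field valued in ${\rm End}^n$) manifestly satisfies $D({\mathcal F}^j_0)\subset {\mathcal F}^{j+1}_0\otimes K_X(S)$ because the rank-two connection increases the ${\mathcal L}$-order by at most one. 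To handle a general parabolic connection $D'=D+\theta$ with $\theta\in H^0(X,\,\text{End}^n({\rm Sym}^{r-1}(E_*))\otimes K_X(S))$, I would observe that $\theta$, being nilpotent with respect to the quasiparabolic filtration over $S$ and (after passing to the orbifold cover $Y$ via Proposition \ref{prop4}) a holomorphic endomorphism-valued one-form, cannot create a new nonzero component: one must check that $\text{End}^n$ already respects the filtration \eqref{e38} up to one step, which is a local computation on the fibers over $S$ using the explicit quasiparabolic filtrations of ${\rm Sym}^2$, ${\rm Sym}^3$, ${\rm Sym}^4$ described in Section \ref{se3} and, in general, the coincidence of the quasiparabolic filtration of ${\rm Sym}^{r-1}(E_*)$ at $x_i$ with (the image of) the filtration \eqref{e38}.

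The main obstacle I anticipate is the last point: verifying that the quasiparabolic filtration of ${\rm Sym}^{r-1}(E_*)$ at each $x_i\in S$ coincides, as a flag, with the one induced by \eqref{e38}, so that every endomorphism nilpotent for the quasiparabolic filtration is automatically strictly upper-triangular — hence shifts the filtration — with respect to \eqref{e38}. For the low symmetric powers this is exactly the content of the explicit descriptions in Section \ref{se3} (compare \eqref{e21}, \eqref{e23}, and the filtration of $E^4_{x_i}$ with the corresponding ${\mathcal F}^j$'s), and for general $r$ it should follow from the definition of ${\rm Sym}^{r-1}(E_*)$ as a filtered sheaf and the multiplicativity of the Maruyama–Yokogawa construction; but pinning down the precise identification requires care with the shifts by ${\mathcal O}_X(kS)$ appearing in $E^2,E^3,E^4$. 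Once this is in place, the Leibniz computation and the degree argument of item (1) finish the proof.
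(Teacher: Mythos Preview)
Your argument for item (1) is correct and coincides with the paper's: both use the positivity of $\text{par-deg}(\mathcal{F}^j_*)$ from \eqref{e41}.

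For item (2), however, there is a genuine gap. Your strategy is to prove Griffiths transversality first for a parabolic connection $D$ induced from a rank-two connection on $E_*$ (where it is clear), and then to pass to an arbitrary parabolic connection $D'=D+\theta$ with $\theta\in H^0(X,\text{End}^n(\text{Sym}^{r-1}(E_*))\otimes K_X(S))$ by arguing that $\theta$ also shifts the filtration \eqref{e38} by at most one step. But the sheaf $\text{End}^n(\text{Sym}^{r-1}(E_*))$ is defined only by a nilpotency condition on the fibres over the finitely many points of $S$; away from $S$ it is the \emph{full} endomorphism sheaf. So the nilpotency hypothesis places no constraint whatsoever on $\theta$ over $X\setminus S$, and there is no a priori reason for a global section $\theta$ to respect the filtration $\{\mathcal{F}^j_*\}$. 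Passing to the orbifold cover $Y$ does not help: there $\theta$ becomes a $\Gamma$-invariant section of $\text{End}(\text{Sym}^{r-1}(\mathcal{V}))\otimes K_Y$, with no filtration condition at all. The ``obstacle'' you flag (identifying the quasiparabolic flag at $x_i$ with the fibre of \eqref{e38}) is therefore not the real issue; even granting that identification, the argument fails on $X\setminus S$.

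The paper's approach to (2) is entirely different and avoids this problem: it is a global cohomological vanishing argument. One observes that the component of the second fundamental form landing in $(\mathcal{F}^{j+k}_*/\mathcal{F}^{j+k-1}_*)\otimes K_X(S)$, for $k\ge 2$, is an $\mathcal{O}_X$-linear map, hence a global section of
\[
(\mathcal{F}^j_*/\mathcal{F}^{j-1}_*)^*\otimes(\mathcal{F}^{j+k}_*/\mathcal{F}^{j+k-1}_*)\otimes K_X(S)\,=\,(E_*/\mathcal{L}_*)^{2k}\otimes K_X(S),
\]
using \eqref{e39} and \eqref{l1}. A direct degree computation shows that the underlying holomorphic line bundle has negative degree (this uses $k\ge 2$ and the standing hypothesis $n\ge 3$ when $g=0$), so it has no nonzero global sections. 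This kills all the ``bad'' components at once and yields $D(\mathcal{F}^j_0)\subset\mathcal{F}^{j+1}_0\otimes K_X(S)$. Note that this degree argument is exactly what would be needed to justify your claim that $\theta$ respects the filtration --- so the missing ingredient in your route is in fact the whole content of the paper's proof.
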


\begin{proof}
{}From \eqref{e41} it follows that $\text{par-deg}({\mathcal F}^j_*)\,\not=\, 0$ (in fact,
$\text{par-deg}({\mathcal F}^j_*)\,>\, 0$) for all $1\,\leq\, j\, \leq\,
r-1$. Consequently, $D$ does not preserve ${\mathcal F}^j_*$.

For any $1\,\leq\, j\, \leq\, r-2$, and any $2\, \leq\, k\,\leq\, r-j$, consider the parabolic line bundle 
$$
({\mathcal F}^j_*/{\mathcal F}^{j-1}_*)^*\otimes ({\mathcal F}^{j+k}_*/{\mathcal F}^{j+k-1}_*)\,=\,
(({\mathcal L}_*)^{r-j}\otimes (E_*/{\mathcal L}_*)^{j-1})^*\otimes
(({\mathcal L}_*)^{r-j-k} \otimes (E_*/{\mathcal L}_*)^{j+k-1})
$$
$$
=\, ({\mathcal L}_*)^{r-j-k- (r-j)} \otimes (E_*/{\mathcal L}_*)^{j+k-1- (j-1)}
\,=\, ({\mathcal L}_*)^{-k} \otimes (E_*/{\mathcal L}_*)^{k}\,=\, (E_*/{\mathcal L}_*)^{2k};
$$
see \eqref{e39} and \eqref{l1} for the above isomorphisms. The holomorphic line bundle underlying the parabolic
line bundle $({\mathcal F}^j_*/{\mathcal F}^{j-1}_*)^*\otimes ({\mathcal F}^{j+k}_*/{\mathcal F}^{j+k-1}_*)\,=\,
(E_*/{\mathcal L}_*)^{2k}$ will be denoted by $\xi_{r,k}$. We have
$$
\text{degree}(\xi_{r,k})\,=\, 2k\cdot \text{degree}(E/{\mathcal L}) + \sum_{i=1}^n\left[\frac{2k(c_i+1)}{2c_i+1}\right]
$$
$$
=\, 2k (1-g -n) + kn + \sum_{i=1}^n\left[\frac{k}{2c_i+1}\right] \,=\, k(2-2g-n)+ + \sum_{i=1}^n\left[\frac{k}{2c_i+1}\right]\, ,
$$
where $[t]\, \in\, \mathbb Z$ denotes the integral part of $t$, meaning $0\, \leq\, t-[t]\, <\, 1$.
This implies that
$$
\text{degree}(\xi_{r,k}) \, <\, 2-2g-n\,=\, - \text{degree}(K_X(S))
$$
(recall that $n\, \geq\, 3$ if $g\,=\, 0$), and hence $\text{degree}(\xi_{r,k}\otimes K_X(S))\, <\, 0$.
Consequently, we have
$$
H^0(X,\, \xi_{r,k}\otimes K_X(S))\,=\, 0\, .
$$
This implies that
\begin{equation}\label{t2}
H^0(X,\, ({\mathcal F}^j_*/{\mathcal F}^{j-1}_*)^*\otimes ({\mathcal F}^{j+k}_*/{\mathcal F}^{j+k-1}_*)\otimes K_X(S))
\,=\,0\, .
\end{equation}

{}From \eqref{t2} it is deduced that the following composition of homomorphisms
\begin{equation}\label{t1}
{\mathcal F}^{j}_0\, \stackrel{D}{\longrightarrow}\, {\mathcal F}^{r}_0\otimes K_X(S)\, \longrightarrow\,
({\mathcal F}^{r}_0/{\mathcal F}^{j+1}_0)\otimes K_X(S)
\end{equation}
vanishes identically, where ${\mathcal F}^{\ell}_0$ is the holomorphic vector bundle underlying the
parabolic bundle ${\mathcal F}^{\ell}_*$. To see this, observe that the parabolic vector bundle
$$\text{Hom}({\mathcal F}^{j}_*,\, ({\mathcal F}^{r}_*/{\mathcal F}^{j+1}_*)\otimes K_X(S))\,=\,
({\mathcal F}^{r}_*/{\mathcal F}^{j+1}_*)\otimes K_X(S)\otimes ({\mathcal F}^{j}_*)^*\,=\,
({\mathcal F}^{r}_*/{\mathcal F}^{j+1}_*)\otimes ({\mathcal F}^{j}_*)^*\otimes K_X(S)
$$
has a filtration of parabolic subbundles such that the successive quotients are
$$({\mathcal F}^j_*/{\mathcal F}^{j-1}_*)^*\otimes ({\mathcal F}^{j+k}_*/{\mathcal F}^{j+k-1}_*)
\otimes K_X(S)\, ,\ \ \, 2\,\leq\, k\, \leq\, r-j.$$ So \eqref{t2} implies that the
composition of homomorphisms in \eqref{t1} vanishes identically. Since the
composition of homomorphisms in \eqref{t1} vanishes identically we have
$$D({\mathcal F}^{j}_*)\, \subset\, {\mathcal F}^{j+1}_*$$ for all $1\,\leq\, j\, \leq\, r-1$.
\end{proof}

{}From \eqref{e39} it follows that for any $1\,\leq\, j\, \leq\, r-1$, the parabolic line bundle 
$$
({\mathcal F}^j_*/{\mathcal F}^{j-1}_*)^*\otimes ({\mathcal F}^{j+1}_*/{\mathcal F}^{j}_*)
\,=\, (E_*/{\mathcal L}_*) \otimes {\mathcal L}^*_*\,=\, (E_*/{\mathcal L}_*)^{\otimes 2}
$$
is $TX(-S)\,=\, K_X(S)^*$ equipped with the parabolic weight $\frac{1}{2c_i+1}$ at each $x_i\, \in\, S$
(see \eqref{l1} for the above isomorphism). Therefore, from
Lemma \ref{lem6}(2) we conclude that for any parabolic connection $D$ on the parabolic bundle ${\rm Sym}^{r-1}(E_*)$,
the second fundamental forms for the parabolic subbundles in \eqref{e38} are given by a collection of holomorphic homomorphisms
\begin{equation}\label{e42}
\psi(D,j)\, \,\in \,\, H^0(X,\, \text{Hom}({\mathcal F}^j_*/{\mathcal F}^{j-1}_*,\,
{\mathcal F}^{j+1}_*/{\mathcal F}^{j}_*)\otimes K_X(S))\,=\, H^0(X,\, {\mathcal O}_X)
\end{equation}
with $1\,\leq\, j\, \leq\, r-1$.

\begin{corollary}\label{cor4}
For each $1\,\leq\, j\, \leq\, r-1$, the section $\psi(D,j)$ in \eqref{e42} is a nonzero constant.
\end{corollary}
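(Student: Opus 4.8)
The plan is to obtain this immediately from the two parts of Lemma \ref{lem6}, by a short argument by contradiction. First, note that by \eqref{e42} the quantity $\psi(D,j)$ lies in $H^0(X,\, {\mathcal O}_X)\,=\,{\mathbb C}$, so it is automatically a constant scalar; the content of the statement is therefore only the non-vanishing.

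Suppose, for a contradiction, that $\psi(D,j_0)\,=\,0$ for some $1\,\leq\, j_0\,\leq\, r-1$. By Lemma \ref{lem6}(2) one has $D({\mathcal F}^{j_0}_0)\,\subset\, {\mathcal F}^{j_0+1}_0\otimes K_X(S)$, and, by construction, $\psi(D,j_0)$ is the vector bundle homomorphism obtained by composing $D|_{{\mathcal F}^{j_0}_0}$ with the quotient map ${\mathcal F}^{j_0+1}_0\otimes K_X(S)\,\longrightarrow\,({\mathcal F}^{j_0+1}_0/{\mathcal F}^{j_0}_0)\otimes K_X(S)$. This composition is ${\mathcal O}_X$--linear because the $df$--terms coming from the Leibniz rule \eqref{lr} land in ${\mathcal F}^{j_0}_0\otimes K_X$ and hence die in the quotient; it descends to ${\mathcal F}^{j_0}_0/{\mathcal F}^{j_0-1}_0$ because $D({\mathcal F}^{j_0-1}_0)\,\subset\, {\mathcal F}^{j_0}_0\otimes K_X(S)$ by Lemma \ref{lem6}(2) applied to $j_0-1$. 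Hence the vanishing of $\psi(D,j_0)$ forces
$$
D({\mathcal F}^{j_0}_0)\,\subset\, {\mathcal F}^{j_0}_0\otimes K_X(S),
$$
that is, $D$ preserves the holomorphic subbundle ${\mathcal F}^{j_0}_0$.

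The remaining step is to upgrade this to the statement that $D$ preserves the parabolic subbundle ${\mathcal F}^{j_0}_*$, so that Lemma \ref{lem6}(1) is contradicted. Since $D$ preserves ${\mathcal F}^{j_0}_0$, for each $x_i\,\in\, S$ the residue of the restricted logarithmic connection $D|_{{\mathcal F}^{j_0}_0}$ is the restriction ${\rm Res}(D,\, x_i)|_{({\mathcal F}^{j_0}_0)_{x_i}}$; and as $D$ is a parabolic connection on ${\rm Sym}^{r-1}(E_*)$, its residue preserves every step of the quasiparabolic filtration of ${\rm Sym}^{r-1}(E_*)_{x_i}$ and acts by the prescribed weight on each successive quotient. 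Intersecting these steps with $({\mathcal F}^{j_0}_0)_{x_i}$ recovers exactly the quasiparabolic filtration and the weights of the induced parabolic subbundle ${\mathcal F}^{j_0}_*$, so $D|_{{\mathcal F}^{j_0}_0}$ is a parabolic connection on ${\mathcal F}^{j_0}_*$. This contradicts Lemma \ref{lem6}(1), and hence $\psi(D,j_0)\,\neq\, 0$.

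I do not expect a genuine obstacle here; the only point needing a little care is the last upgrade from holomorphic to parabolic invariance, which is standard. One can also bypass it: $D$--invariance of ${\mathcal F}^{j_0}_0$ would exhibit ${\mathcal F}^{j_0}_*$ as a parabolic subbundle admitting a parabolic connection, which via Remark \ref{re-cc} and \eqref{e41} contradicts $\text{par-deg}({\mathcal F}^{j_0}_*)\,>\,0$ (this is essentially the proof of Lemma \ref{lem6}(1) inlined). As a consistency check, for $D\,=\,\text{Sym}^{r-1}(D_2)$ with $D_2$ a parabolic ${\rm SL}(2,{\mathbb C})$--connection on $E_*$ —which exists by Corollary \ref{cor1}(1)— a short local computation in a frame adapted to the splitting ${\mathcal L}\oplus {\mathcal L}^*(-S)$ of \eqref{e4} gives $\psi(\text{Sym}^{r-1}(D_2),\, j)\,=\,(r-j)\cdot {\mathcal S}(D_2,\, {\mathcal L})$, which is indeed a nonzero constant by Corollary \ref{cor1}(2).
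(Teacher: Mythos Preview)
Your proof is correct and follows exactly the same approach as the paper: the paper's proof is the single sentence ``From Lemma \ref{lem6}(1) it follows immediately that $\psi(D,j)\neq 0$,'' and your argument spells out precisely the implicit step that $\psi(D,j_0)=0$ together with Lemma \ref{lem6}(2) would force $D$ to preserve ${\mathcal F}^{j_0}_*$, contradicting Lemma \ref{lem6}(1). Your additional care about the parabolic upgrade and the consistency check for $D={\rm Sym}^{r-1}(D_2)$ are correct extras that the paper simply omits.
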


\begin{proof}
{}From Lemma \ref{lem6}(1) it follows immediately that $\psi(D,j)\, \not=\, 0$.
\end{proof}

\section{Differential operators on parabolic bundles} \label{se5}

In this section we will describe differential operators between parabolic vector bundles. As before, fix a compact Riemann surface
$X$ and a reduced effective divisor $S\,=\,\sum_{i=1}^n x_i$ on it; if $\text{genus}(X)\,=\, 0$, then assume that
$n\, \geq\, 3$. For each point $x_i\, \in\, S$ fix an integer
$N_i\, \geq\, 2$. We will consider parabolic bundles on $X$ with parabolic structure on $S$ such that all the parabolic
weights at each $x_i\,\in\, S$ are integral multiplies of $1/N_i$.

There is a ramified Galois covering
\begin{equation}\label{h1}
\varphi\, :\, Y\, \longrightarrow\, X
\end{equation}
satisfying the following two conditions:
\begin{itemize}
\item $\varphi$ is unramified over the complement $X\setminus S$, and

\item for every $x_i\, \in\, S$ and one (hence every) point $y\, \in\, \varphi^{-1}(x_i)$,
the order of the ramification of $\varphi$ at $y$ is $N_i$.
\end{itemize}
Such a ramified Galois covering $\varphi$ exists; see \cite[p. 26, Proposition 1.2.12]{Na}.
Let
\begin{equation}\label{h2}
\Gamma\,:=\, \text{Gal}(\varphi) \,:=\, \text{Aut}(Y/X) \, \subset\, \text{Aut}(Y)
\end{equation}
be the Galois group for $\varphi$. So the restriction
\begin{equation}\label{h3}
\varphi'\, :=\, \varphi\big\vert_{Y'}\,: \, Y' \,:=\, Y\setminus \varphi^{-1}(S) \,\,\longrightarrow\,\,
X'\,:=\, X\setminus S
\end{equation}
is an \'etale Galois covering with Galois group $\Gamma$.

As before, a holomorphic vector bundle $V$ on $Y$
is called an \textit{orbifold bundle} if $\Gamma$ acts on $V$ as holomorphic bundle
automorphisms over the action of $\Gamma$ on $Y$.

Consider the trivial vector bundle
\begin{equation}\label{e30}
{\mathbb C}[\Gamma]_Y\, :=\, Y\times {\mathbb C}[\Gamma] \, \longrightarrow\, Y\, ,
\end{equation}
where ${\mathbb C}[\Gamma]$ is the group algebra for $\Gamma$ with coefficients in $\mathbb C$. The
usual action of $\Gamma$ on ${\mathbb C}[\Gamma]$ and the Galois action of $\Gamma$ on $Y$ together
produce an action of $\Gamma$ on $Y\times {\mathbb C}[\Gamma]$. This action makes
$Y\times {\mathbb C}[\Gamma]\,=\, {\mathbb C}[\Gamma]_Y$ an orbifold bundle on $Y$. Let
\begin{equation}\label{e31}
{\mathcal E}_*\,\, \longrightarrow\,\, X
\end{equation}
be the corresponding parabolic vector bundle on $X$ with parabolic structure on $S$ \cite{Bi1},
\cite{Bo1}, \cite{Bo2}. The action of $\Gamma$ on the vector bundle ${\mathbb C}[\Gamma]_Y$
in \eqref{e30} produces an action of $\Gamma$ on its direct image $\varphi_*{\mathbb C}[\Gamma]_Y$
over the trivial action of $\Gamma$ on $X$. We have
\begin{equation}\label{fi2}
{\mathcal E}_0\,=\, (\varphi_*{\mathbb C}[\Gamma]_Y)^\Gamma\, \subset\, \varphi_*{\mathbb C}[\Gamma]_Y\, ,
\end{equation}
where $(\varphi_*{\mathbb C}[\Gamma]_Y)^\Gamma$ is the $\Gamma$--invariant part, and ${\mathcal E}_0$ is
the holomorphic vector bundle underlying the parabolic bundle ${\mathcal E}_*$ in \eqref{e31}.

It can be shown that
the holomorphic vector bundle ${\mathcal E}_0\,=\, (\varphi_*{\mathbb C}[\Gamma]_Y)^\Gamma$ is
identified with $\varphi_*{\mathcal O}_Y$. Indeed, there is a natural $\Gamma$--equivariant isomorphism
$$
\varphi_*{\mathbb C}[\Gamma]_Y
\,\, \stackrel{\sim}{\longrightarrow} \,\,
(\varphi_*{\mathcal O}_Y)\otimes_{\mathbb C} {\mathbb C}[\Gamma]\, ;
$$
it is in fact given by the projection formula. Therefore, the natural isomorphism
$$
\varphi_*{\mathcal O}_Y\,\, \stackrel{\sim}{\longrightarrow} \,\,
((\varphi_*{\mathcal O}_Y)\otimes_{\mathbb C} {\mathbb C}[\Gamma])^\Gamma
$$
(any complex $\Gamma$--module $M$ is naturally identified with $(M\otimes_{\mathbb C} {\mathbb C}[\Gamma])^\Gamma$)
produces an isomorphism
\begin{equation}\label{fi}
\varphi_*{\mathcal O}_Y\,\, \stackrel{\sim}{\longrightarrow} \,\,
(\varphi_*{\mathbb C}[\Gamma]_Y)^\Gamma .
\end{equation}

The direct image $\varphi_* {\mathcal O}_Y$ has a natural parabolic structure which we will now describe.

Take any $x_i\, \in\, S$. Fix an analytic open neighborhood $U\, \subset\, X$ of $x_i$ such that
$U\bigcap S \,=\, x_i$. Let ${\mathcal U}\, :=\, \varphi^{-1}(U)\, \subset\, Y$ be the inverse image.
The restriction of $\varphi$ to $\mathcal U$ will be denoted by $\widetilde{\varphi}$.
Let $\widetilde{D}_i\, :=\, \varphi^{-1}(x_i)_{\rm red}\, \subset\, Y$ be the reduced inverse
image. For all $k\, \in\, [1,\, N_i]$, define the vector bundle
$$
V_k\, :=\, \widetilde{\varphi}_*{\mathcal O}_{\mathcal U}(-(N_i-k))\widetilde{D}_i) \,\longrightarrow\, U\, .
$$
So we have a filtration of subsheaves of $V_{N_i}\,=\, (\varphi_* {\mathcal O}_Y)\big\vert_U$:
$$
0\,\subset\, V_1\, \subset\, V_2\, \subset\, \cdots\, \subset\, V_{N_i-1} \, \subset\, V_{N_i}
\,=\, (\varphi_* {\mathcal O}_Y)\big\vert_U\, .
$$
The restriction of this filtration of subsheaves to $x_i$ gives a filtration of subspaces
\begin{equation}\label{e32}
0\,\subset\, (V_1)'_{x_i}\, \subset\, (V_2)'_{x_i}\, \subset\, \cdots\, \subset\, (V_{N_i-1})'_{x_i}
\, \subset\, (V_{N_i})_{x_i} \,=\, (\varphi_* {\mathcal O}_Y)_{x_i}
\end{equation}
of the fiber $(\varphi_* {\mathcal O}_Y)_{x_i}$. We note that $(V_k)'_{x_i}$ in \eqref{e32} is the image, in
the fiber $(\varphi_* {\mathcal O}_Y)_{x_i}$, of the fiber $(V_k)_{x_i}$ over $x_i$ of the vector bundle $V_k$.

The parabolic structure on $\varphi_* {\mathcal O}_Y$ is defined as follows. The parabolic 
divisor is $S$. The quasiparabolic filtration over any $x_i\,\in\, S$ is the filtration of 
$(\varphi_* {\mathcal O}_Y)_{x_i}$ constructed in \eqref{e32}. The parabolic weight of the 
subspace $(V_k)_{x_i}$ in \eqref{e32} is $\frac{N_i-k}{N_i}$. The resulting parabolic vector 
bundle is identified with ${\mathcal E}_*$ in \eqref{e31}; recall from \eqref{fi2} and \eqref{fi} 
that ${\mathcal E}_0$ is identified with $\varphi_*{\mathcal O}_Y$.

The trivial connection on the trivial vector bundle ${\mathbb C}[\Gamma]_Y\, :=\, Y\times {\mathbb C}[\Gamma]$
in \eqref{e30} is preserved by the action of the Galois group $\Gamma$ on ${\mathbb C}[\Gamma]_Y$. Therefore,
this trivial connection produces a parabolic connection on the corresponding parabolic vector bundle ${\mathcal E}_*$ in
\eqref{e31}. This parabolic connection on ${\mathcal E}_*$ will be denoted by $\nabla^{\mathcal E}$.

Using the isomorphism between ${\mathcal E}_0$ and $\varphi_*{\mathcal O}_Y$ (see \eqref{fi2} and \eqref{fi}), the
logarithmic connection on ${\mathcal E}_0$ defining the above parabolic connection $\nabla^{\mathcal E}$ on ${\mathcal E}_*$
produces a logarithmic connection on $\varphi_*{\mathcal O}_Y$. This logarithmic connection on $\varphi_*{\mathcal O}_Y$
given by $\nabla^{\mathcal E}$ is easy to describe. To describe it, take the de Rham differential $d\,:\, {\mathcal O}_Y\,
\longrightarrow\, K_Y$ on $Y$. Let
\begin{equation}\label{k1}
\varphi_*d \, :\, \varphi_*{\mathcal O}_Y\, \longrightarrow\, \varphi_*K_Y
\end{equation}
be its direct image. On the other hand, using the projection formula, the natural homomorphism 
$$
K_Y\, \hookrightarrow\, K_Y\otimes {\mathcal O}_Y(\varphi^{-1}(S)_{\rm red})\,=\, \varphi^*(K_X\otimes{\mathcal O}_X(S))\, .
$$
produces a homomorphism
$$
\varphi_* K_Y\,\, \longrightarrow\,\, \varphi_*(\varphi^*(K_X\otimes{\mathcal O}_X(S)))\,
=\, (\varphi_*{\mathcal O}_Y)\otimes K_X\otimes{\mathcal O}_X(S)\, .
$$
Combining this with $\varphi_*d$ in \eqref{k1} we obtain homomorphisms
$$
\varphi_*{\mathcal O}_Y\, \longrightarrow\, \varphi_*K_Y
\,\, \longrightarrow\,\, (\varphi_*{\mathcal O}_Y)\otimes K_X\otimes{\mathcal O}_X(S)\, .
$$
This composition of homomorphisms $\varphi_*{\mathcal O}_Y\,\longrightarrow\, (\varphi_*{\mathcal O}_Y)\otimes K_X\otimes{\mathcal O}_X(S)$
defines a logarithmic connection on $\varphi_*{\mathcal O}_Y$. This logarithmic connection coincides
with the one that defines the above constructed parabolic connection $\nabla^{\mathcal E}$ on ${\mathcal E}_*$.

The parabolic connection $\nabla^{\mathcal E}$ on ${\mathcal E}_*$ defines a nonsingular holomorphic connection ${\nabla}'$ on
$$
{\mathcal E}'_0\, :=\, {\mathcal E}_0\big\vert_{X'}\,=\,\varphi_{1*}{\mathcal O}_{Y'}
$$
over $X'$ (see \eqref{h3}). For any holomorphic vector bundle $V'$ on $X'$, note that
\begin{equation}\label{e33}
J^k(V'\otimes {\mathcal E}'_0) \,=\, J^k(V')\otimes{\mathcal E}'_0
\end{equation}
for all $k\, \geq\, 0$. To see this isomorphism, for any $x\, \in\, X'$ and $u\, \in\, ({\mathcal E}'_0)_x$,
let $\widetilde{u}$ denote the unique flat section of ${\mathcal E}'_0$ for the connection $\nabla'$, defined on any simply connected
open neighborhood of $x$, such that $\widetilde{u}(x)\,=\, u$. Now the homomorphism
$$
J^k(V')\otimes{\mathcal E}'_0\, \longrightarrow\, J^k(V'\otimes {\mathcal E}'_0)
$$
that sends any $v\otimes u$ to the image of $v\otimes\widetilde{u}$, where $v\, \in\, J^k(V')_x$ and $u\, \in\, ({\mathcal E}'_0)_x$ with
$x\, \in\, X'$, is evidently an isomorphism.

Take holomorphic vector bundles $V'$ and $W'$ on a nonempty Zariski open subset $U\, \subset\, X'$. Recall that a holomorphic
differential operator of order $k$ from $V'$ to $W'$ is a holomorphic homomorphism $J^k(V')\, \longrightarrow\, W'$. Let
$$
D'\,:\, J^k(V')\, \longrightarrow\, W'
$$
be a holomorphic differential operator of order $k$ from $V'$ to $W'$ on $U$.

We will show that $D'$ extends to a
holomorphic differential operator
\begin{equation}\label{e34}
\widetilde{D'}\, :\, J^k(V'\otimes {\mathcal E}'_0)\, \longrightarrow\, W'\otimes {\mathcal E}'_0
\end{equation}
from $V'\otimes {\mathcal E}'_0$ to $W'\otimes {\mathcal E}'_0$ over $U$. To construct
$\widetilde{D'}$, using the isomorphism in \eqref{e33} we have
$$
J^k(V'\otimes {\mathcal E}'_0) \,=\, J^k(V')\otimes{\mathcal E}'_0\,\,\,\xrightarrow{\,\, D'\otimes {\rm Id}_{{\mathcal E}'_0}
\,\,} \,\,\, W'\otimes {\mathcal E}'_0\, .
$$
This homomorphism is the one in \eqref{e34}.

Let $V_*$ and $W_*$ be parabolic vector bundles on $X$. Denote the restrictions $V_0\big\vert_{X'}$ and $W_0\big\vert_{X'}$
by $V'$ and $W'$ respectively. The holomorphic vector bundle underlying the parabolic tensor product
$V_*\otimes {\mathcal E}_*$ (respectively, $W_*\otimes {\mathcal E}_*$) will be denoted by
$(V_*\otimes {\mathcal E}_*)_0$ (respectively, $(W_*\otimes {\mathcal E}_*)_0$), where ${\mathcal E}_*$ is the
parabolic bundle in \eqref{e31}.

\begin{definition}\label{def2}
A {\it holomorphic differential operator} of order $k$ from $V_*$ to $W_*$ over an open subset
$\widetilde{U}\,\subset\, X$ is a holomorphic homomorphism
$$
D'\,:\, J^k(V')\, \longrightarrow\, W'
$$
over $U\,:=\, \widetilde{U}\bigcap X'$ such that the homomorphism 
$$
\widetilde{D'}\, :\, J^k(V'\otimes {\mathcal E}'_0)\, \longrightarrow\, W'\otimes {\mathcal E}'_0
$$
in \eqref{e34} extends to a holomorphic homomorphism
$J^k((V_*\otimes {\mathcal E}_*)_0)\, \longrightarrow\, (W_*\otimes {\mathcal E}_*)_0$ over entire $\widetilde{U}$.
\end{definition}

It is straightforward to check that the above definition does not depend on the choice of the map $\varphi$.

We denote by ${\rm Diff}^k_X(V_*,\, W_*)$ the sheaf of holomorphic differential operators of order 
$k$ from $V_*$ to $W_*$. Define
$$
{\rm DO}^k_P(V_*,\, W_*)\,\,:=\,\, H^0(X,\, {\rm Diff}^k_X(V_*,\, W_*))
$$
to be the space of all holomorphic 
differential operators of order $k$ from $V_*$ to $W_*$ over $X$.

Let $\mathbb V$ and $\mathbb W$ denote the orbifold vector bundles on $Y$ corresponding to the parabolic vector
bundles $V_*$ and $W_*$ respectively. Consider the space
$$
{\rm DO}^k(\mathbb{V},\, \mathbb{W})\,:=\, H^0(Y,\, \text{Hom}(J^k(\mathbb{V}),\, \mathbb{W}))
$$
of holomorphic differential operators of order $k$ from $\mathbb V$ to
$\mathbb W$ over $Y$. Then the actions of $\Gamma$ on $\mathbb V$ and $\mathbb W$ together produce an
action of $\Gamma$ on ${\rm DO}^k(\mathbb{V},\, \mathbb{W})$. Let
$$
H^0(Y,\, \text{Hom}(J^k(\mathbb{V}),\, \mathbb{W}))^\Gamma\,=\, {\rm DO}^k(\mathbb{V},\, \mathbb{W})^\Gamma
\, \subset\, {\rm DO}^k(\mathbb{V},\, \mathbb{W})
$$
be the space of all $\Gamma$--invariant differential operators of order $k$ from $\mathbb V$ to $\mathbb W$.

\begin{proposition}\label{prop5}
There is a natural isomorphism
$$
{\rm DO}^k(\mathbb{V},\, \mathbb{W})^\Gamma\,\, \stackrel{\sim}{\longrightarrow}\,\, 
{\rm DO}^k_P(V_*,\, W_*)\, .
$$
\end{proposition}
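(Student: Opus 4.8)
The plan is to pass to the \'etale covering $\varphi'\colon Y'\to X'$ of \eqref{h3}, descend differential operators along it, and then analyse the extension condition of Definition~\ref{def2} locally near the ramification points, where the auxiliary bundle $\mathcal{E}_*$ does all the work.

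First, over $X'$ the parabolic structures are trivial, so $V_*|_{X'}=V'$ and $W_*|_{X'}=W'$, while $\mathbb{V}|_{Y'}=\varphi'^*V'$ and $\mathbb{W}|_{Y'}=\varphi'^*W'$ because $\varphi'$ is an \'etale Galois covering with group $\Gamma$; and since \'etale maps preserve jet bundles, $J^k(\varphi'^*V')=\varphi'^*J^k(V')$. Hence \'etale descent along $\varphi'$ gives a natural linear bijection $D'\leftrightarrow D'':=\varphi'^*D'$ between order-$k$ holomorphic differential operators $J^k(V')\to W'$ on $X'$ and $\Gamma$-invariant order-$k$ holomorphic differential operators $J^k(\mathbb{V}|_{Y'})\to\mathbb{W}|_{Y'}$ on $Y'$. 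Moreover, using $\mathcal{E}'_0=\varphi'_*\mathcal{O}_{Y'}$, the projection formula and \eqref{e33}, the identifications $J^k(V'\otimes\mathcal{E}'_0)=J^k(V')\otimes\mathcal{E}'_0=\varphi'_*J^k(\mathbb{V}|_{Y'})$ and $W'\otimes\mathcal{E}'_0=\varphi'_*\mathbb{W}|_{Y'}$ identify the twisted operator $\widetilde{D'}$ of \eqref{e34} with the direct image $\varphi'_*(D'')$.

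Second, I would record the identification $(V_*\otimes\mathcal{E}_*)_0\cong\varphi_*\mathbb{V}$, and likewise $(W_*\otimes\mathcal{E}_*)_0\cong\varphi_*\mathbb{W}$, restricting over $X'$ to the identifications just made. This follows from the parabolic/orbifold correspondence of \cite{Bi1,Bo1,Bo2}, which is compatible with tensor products (so $V_*\otimes\mathcal{E}_*$ corresponds to the orbifold bundle $\mathbb{V}\otimes{\mathbb C}[\Gamma]_Y$), together with the projection formula and the canonical isomorphism $M\cong(M\otimes_{\mathbb C}{\mathbb C}[\Gamma])^\Gamma$ for $\Gamma$-modules $M$ --- exactly the argument already used above for $\mathcal{E}_0=\varphi_*\mathcal{O}_Y$. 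Granting this, and using that the natural comparison morphism $J^k(\varphi_*\mathbb{V})\to\varphi_*J^k(\mathbb{V})$ is an isomorphism over $X'$, the condition of Definition~\ref{def2} --- that $\widetilde{D'}$ extend to a holomorphic homomorphism $J^k((V_*\otimes\mathcal{E}_*)_0)\to(W_*\otimes\mathcal{E}_*)_0$ over $X$ --- becomes the condition that $\varphi'_*(D'')$ extend to a holomorphic homomorphism $J^k(\varphi_*\mathbb{V})\to\varphi_*\mathbb{W}$ over $X$.

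Finally I would check, locally near each $x_i\in S$, that this last condition is equivalent to $D''$ extending to an order-$k$ holomorphic differential operator $J^k(\mathbb{V})\to\mathbb{W}$ on all of $Y$; combined with the first two steps, applied over all open subsets, and passed to global sections over $X$, this yields the asserted isomorphism ${\rm DO}^k(\mathbb{V},\mathbb{W})^\Gamma\xrightarrow{\ \sim\ }{\rm DO}^k_P(V_*,W_*)$. One implication is easy: an extension $\widetilde{D}\colon J^k(\mathbb{V})\to\mathbb{W}$ on $Y$ (automatically $\Gamma$-invariant, extensions being unique) induces the extension $J^k(\varphi_*\mathbb{V})\to\varphi_*J^k(\mathbb{V})\xrightarrow{\,\varphi_*\widetilde{D}\,}\varphi_*\mathbb{W}$ of $\varphi'_*(D'')$. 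The converse is the crux, and is where the main obstacle lies: near $x_i$ a local coordinate $z$ pulls back to $w^{N_i}$ for a coordinate $w$ at $y_i\in\varphi^{-1}(x_i)$, so $J^k(\varphi_*\mathbb{V})$ near $x_i$ encodes $w$-jets of sections of $\mathbb{V}$ up to order $N_i(k+1)-1$, which is $\ge k$; writing $D''=\sum_{j\le k}A_j(w)\,\partial_w^{\,j}$ on the punctured disc, a holomorphic extension over $x_i$ of $\varphi'_*(D'')$ must agree with this expression there, and its holomorphicity is equivalent to the holomorphicity at $w=0$ of all the matrix coefficients $A_j(w)$ --- that is, to $D''$ extending over $y_i$. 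Carrying out this local bookkeeping of jet bundles across the ramified covering, and thereby verifying that the twist by $\mathcal{E}_*$ built into Definition~\ref{def2} reconciles exactly ``order $k$ on $X$'' with the corresponding $w$-jet data on $Y$, is the step I expect to require the most care.
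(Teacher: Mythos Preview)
Your approach is essentially the same as the paper's: the pivotal step in both is the identification $(V_*\otimes\mathcal{E}_*)_0\cong\varphi_*\mathbb{V}$, proved via tensor-compatibility of the parabolic/orbifold correspondence together with $M\cong(M\otimes_{\mathbb C}\mathbb{C}[\Gamma])^\Gamma$, after which the map is simply $D\mapsto\varphi_*D$ and its inverse reads off $D$ from the extended homomorphism $J^k((V_*\otimes\mathcal{E}_*)_0)\to(W_*\otimes\mathcal{E}_*)_0$. The paper's proof declares the remaining verification ``straightforward'' and omits it, whereas you spell out the local jet bookkeeping near the ramification points; your more explicit treatment is a genuine addition rather than a different route.
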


\begin{proof}
We will first prove that
\begin{equation}\label{e35}
\varphi_*\mathbb{V}\,=\, (V_*\otimes {\mathcal E}_*)_0\, ,
\end{equation}
where ${\mathcal E}_*$ is the parabolic bundle in \eqref{e31} and
$(V_*\otimes {\mathcal E}_*)_0$ is the vector bundle underlying the parabolic
vector bundle $V_*\otimes {\mathcal E}_*$. To prove \eqref{e35}, first note that
\begin{equation}\label{e36}
\varphi_*\mathbb{V}\,=\, \left(\varphi_*({\mathbb V}\otimes {\mathbb C}[\Gamma]_Y)\right)^\Gamma\, ,
\end{equation}
where ${\mathbb C}[\Gamma]_Y$ is the orbifold bundle in \eqref{e30}. Since $\mathcal{E}_*$ and
$V_*$ correspond to the orbifold bundles ${\mathbb C}[\Gamma]_Y$ and $\mathbb V$
respectively, the parabolic bundle corresponding to the orbifold bundle
${\mathbb V}\otimes {\mathbb C}[\Gamma]_Y$ is $V_*\otimes {\mathcal E}_*$. In particular, we have
$$
\left(\varphi_*({\mathbb V}\otimes {\mathbb C}[\Gamma]_Y)\right)^\Gamma \,=\, 
(V_*\otimes {\mathcal E}_*)_0\, .
$$
This and \eqref{e36} together give the isomorphism in \eqref{e35}.

Let $D\, :\, \mathbb{V}\, \longrightarrow\, \mathbb{W}$ be a holomorphic differential operator of order $k$
on $Y$. Taking its direct image for the map $\varphi$, we have
$$
\varphi_* D\,\,:\,\, \varphi_*\mathbb{V} \,\, \longrightarrow\,\,\varphi_*\mathbb{W}\, .
$$
Now if $D\, \in\, {\rm DO}^k(\mathbb{V},\, \mathbb{W})^\Gamma$, then clearly
$$
\varphi_* D((\varphi_*\mathbb{V})^\Gamma)\,\, \subset\,\, (\varphi_*\mathbb{W})^\Gamma\, .
$$
Let
$$
D_\varphi\,\,:=\,\,(\varphi_* D)\big\vert_{(\varphi_*\mathbb{V})^\Gamma}
\,\,:\,\, (\varphi_*\mathbb{V})^\Gamma \,\, \longrightarrow\,\,(\varphi_*\mathbb{W})^\Gamma
$$
be the restriction of $\varphi_* D$ to $(\varphi_*\mathbb{V})^\Gamma\, \subset\,
\varphi_*\mathbb{V}$.

Using \eqref{e35} it is now straightforward to check that $D_\varphi$ defines a 
holomorphic differential operator of order $k$ from the parabolic bundle $V_*$ to $W_*$. The
corresponding homomorphism $J^k((V_*\otimes {\mathcal E}_*)_0)\, \longrightarrow\,
(W_*\otimes {\mathcal E}_*)_0$ in Definition \ref{def1} is given by $\varphi_* D$ using the
isomorphism in \eqref{e35}.

The isomorphism in the proposition sends any $D\, \in\, {\rm DO}^k(\mathbb{V},\, \mathbb{W})^\Gamma$ to
$D_\varphi\, \in\, {\rm DO}^k_P(V_*,\, W_*)$ constructed above from $D$.

For the inverse map, given any $\mathbf{D}\, \in\, {\rm DO}^k_P(V_*,\, W_*)$, consider the
homomorphism 
$$J^k((V_*\otimes {\mathcal E}_*)_0)\, \longrightarrow\,
(W_*\otimes {\mathcal E}_*)_0$$ in Definition \ref{def1} given by the differential operator
$\mathbf{D}$. Using the isomorphism in \eqref{e35} it produces a holomorphic differential operator from
$\mathbb V$ to $\mathbb W$. This differential operator is evidently fixed by
the action of $\Gamma$ on ${\rm DO}^k(\mathbb{V},\, \mathbb{W})$.
\end{proof}

\subsection{Another description of differential operators on parabolic bundles}

We will give an alternative description of the holomorphic differential operators between two
parabolic vector bundles. Let ${\rm Diff}^k_Z(A,\, B)$ denote the sheaf of holomorphic differential operators 
of order $k$ from a holomorphic vector bundle $A$ on a complex manifold $Z$ to another holomorphic vector
bundle $B$ on $Z$. The sheaf ${\rm Diff}^k_Z({\mathcal O}_Z,\, {\mathcal O}_Z)\,=\, J^k({\mathcal O}_Z)^*$ has
both left and right ${\mathcal O}_Z$--module structures, and
\begin{equation}\label{e51}
{\rm Diff}^k_Z(A,\, B)\,\,=\,\, B\otimes_{{\mathcal O}_Z}
{\rm Diff}^k_Z({\mathcal O}_Z,\, {\mathcal O}_Z)
\otimes_{{\mathcal O}_Z} A^*\, .
\end{equation}
We have a short exact sequence of holomorphic vector bundles
\begin{equation}\label{e52}
0\, \longrightarrow\, {\rm Diff}^k_Z({\mathcal O}_Z,\, {\mathcal O}_Z)
\, \stackrel{\alpha}{\longrightarrow}\, {\rm Diff}^{k+1}_Z({\mathcal O}_Z,\, {\mathcal O}_Z)
\, \stackrel{\eta}{\longrightarrow}\, \text{Sym}^{k+1}(TZ) \, \longrightarrow\, 0\, ,
\end{equation}
where $\eta$ is the symbol map. The homomorphism
$$
{\rm Id}_B\otimes\alpha\otimes {\rm Id}_{A^*}\, :\,
B\otimes_{{\mathcal O}_Z}{\rm Diff}^k_Z({\mathcal O}_Z,\, {\mathcal O}_Z)\otimes_{{\mathcal O}_Z} A^*
\, \longrightarrow\, B\otimes_{{\mathcal O}_Z}{\rm Diff}^{k+1}_Z({\mathcal O}_Z,\, {\mathcal O}_Z)
\otimes_{{\mathcal O}_Z} A^*\, ,
$$
where $\alpha$ is the homomorphism in \eqref{e52}, coincides with the natural inclusion
map $${\rm Diff}^k_Z(A,\, B)\, \hookrightarrow\,{\rm Diff}^{k+1}_Z(A,\, B).$$

The holomorphic differential operators between two parabolic vector bundles will be described 
along the above line.

Consider the pair $(Y,\, \varphi)$ in \eqref{h1}. The action of $\Gamma\,=\, 
\text{Gal}(\varphi)$ on $Y$ produces an action of $\Gamma$ on ${\mathcal O}_Y$. This
action of $\Gamma$ on ${\mathcal O}_Y$ induces an action of $\Gamma$ on
$J^k({\mathcal O}_Y)$, which in turn induces an action of $\Gamma$ on the dual vector
bundle $J^k({\mathcal O}_Y)^*\,=\, {\rm Diff}^k_Y({\mathcal O}_Y,\, {\mathcal O}_Y)$.
As mentioned before, ${\rm Diff}^k_Y({\mathcal O}_Y,\, {\mathcal O}_Y)$ is equipped with
left and right ${\mathcal O}_Y$--module structures. These module structures are $\Gamma$--equivariant.
Let ${\mathcal J}^k_*$ denote the parabolic vector bundle on $X$ associated to the
orbifold vector bundle $J^k({\mathcal O}_Y)^*\,=\, {\rm Diff}^k_Y({\mathcal O}_Y,\, {\mathcal O}_Y)$
on $Y$.

Note that the rank of ${\mathcal J}^k_*$ is $k+1$. The parabolic line bundle ${\mathcal J}^0_*$
is the trivial line bundle ${\mathcal O}_X$ equipped with the trivial parabolic structure.
The underlying holomorphic vector bundle for the parabolic bundle ${\mathcal J}^1_*$ is
${\mathcal O}_X\oplus TX(-S)$. The quasiparabolic filtration of ${\mathcal J}^1_*$ over
any point $x_i\, \in\, S$ is
$$
TX(-S)_{x_i}\, \subset\, ({\mathcal O}_X)_{x_i}\oplus TX(-S)_{x_i}\,=\, ({\mathcal J}^1_0)_{x_i}\, .
$$
The parabolic weight of $TX(-S)_{x_i}$ is $\frac{1}{N_i}$ and the parabolic weight of
$({\mathcal J}^1_0)_{x_i}$ is $0$. Let
\begin{equation}\label{e53}
TX(-S)_*\, \, \longrightarrow\, X
\end{equation}
denote the parabolic line bundle defined by $TX(-S)$ equipped with the
parabolic weight $\frac{1}{N_i}$ at each $x_i\, \in\, S$. So
$$
{\mathcal J}^1_*\,=\, TX(-S)_*\oplus {\mathcal O}_X,
$$
where ${\mathcal O}_X$ has the trivial parabolic structure.

Using the homomorphism $\alpha$ in \eqref{e52} for $Y$ and $j\,=\, k$ we see that
${\mathcal J}^j_*$ is a parabolic subbundle of ${\mathcal J}^{j+1}_*$ for
all $j\, \geq\, 0$. Consequently, we have filtration of parabolic subbundles
\begin{equation}\label{e55}
{\mathcal J}^0_*\, \subset\, {\mathcal J}^1_*\, \subset\, \cdots \, \subset\, 
{\mathcal J}^{k-1}_* \, \subset\, {\mathcal J}^k_*
\end{equation}
for all $k\, \geq\, 0$ such that each successive quotient is a parabolic line bundle.

We will describe
the quotient parabolic line bundle ${\mathcal J}^{j}_*/{\mathcal J}^{j-1}_*$ in \eqref{e55}
for all $1\, \leq\, j\,\leq\, k$.

The holomorphic line bundle underlying the parabolic bundle ${\mathcal J}^{j}_*/{\mathcal J}^{j-1}_*$
is
$$
(TX)^{\otimes j}(-jS)\otimes{\mathcal O}_X
\left(\sum_{i=1}^n\left[\frac{j}{N_i}\right]x_i\right)\, ,
$$
where $\left[\frac{j}{N_i}\right]\, \in\, \mathbb Z$ is the integral part of $\frac{j}{N_i}$,
and its parabolic weight at any $x_i\, \in\, S$ is $\frac{j}{N_i} -\left[\frac{j}{N_i}\right]$.
Indeed, from \eqref{e52} we know that the parabolic line bundle ${\mathcal J}^{j}_*/
{\mathcal J}^{j-1}_*$ corresponds to the orbifold line bundle $(TY)^{\otimes j}$ on $Y$. On the other
hand, the parabolic line bundle $TX(-S)_*$ defined in \eqref{e53} corresponds to the orbifold line
bundle $TY$. Therefore, we have
\begin{equation}\label{e54}
{\mathcal J}^{j}_*/{\mathcal J}^{j-1}_*\,=\, TX(-S)^{\otimes j}_*\, .
\end{equation}
The above description of ${\mathcal J}^{j}_*/{\mathcal J}^{j-1}_*$ follows immediately from \eqref{e54}.

The $\Gamma$--equivariant left and right ${\mathcal O}_Y$--module structures on
${\rm Diff}^k_Y({\mathcal O}_Y,\, {\mathcal O}_Y)$ produces left
and right ${\mathcal O}_X$--module structures on ${\mathcal J}^k_*$. 

Then, for any two parabolic bundles $V_*$ and $W_*$ over $X$, it follows from Proposition \ref{prop5} and \eqref{e51} that 
${\rm Diff}^k_X(V_*,\, W_*)$ coincides with the holomorphic vector bundle underlying the
parabolic tensor product
$$
W_*\otimes_{{\mathcal O}_X} {\mathcal J}^k_*\otimes_{{\mathcal O}_X}
V^*_*\, ;
$$
in other words, we have
$$
{\rm Diff}^k_X(V_*,\, W_*)\,=\, (W_*\otimes_{{\mathcal O}_X} {\mathcal J}^k_*\otimes_{{\mathcal O}_X}
V^*_*)_0\, .
$$

\subsection{The symbol map}

Consider the quotient map
$$
\gamma\,\, :\,\, {\mathcal J}^k_*\, \longrightarrow\, {\mathcal J}^k_*/{\mathcal J}^{k-1}_*\,=\,
TX(-S)^{\otimes k}_*
$$
(see \eqref{e55}, \eqref{e54}). It produces a map
\begin{equation}\label{e56}
\sigma\, :=\, ({\rm Id}_{W_*}\otimes\gamma\otimes{\rm Id}_{V^*_*})_0\,:\,
{\rm Diff}^k_X(V_*,\, W_*)\,=\, (W_*\otimes_{{\mathcal O}_X} {\mathcal J}^k_*
\otimes_{{\mathcal O}_X}\otimes V^*_*)_0
\end{equation}
$$
\longrightarrow\, (W_*\otimes TX(-S)^{\otimes k}_*\otimes V^*_*)_0\,=\,
(TX(-S)^{\otimes k}_*\otimes {\rm Hom}(V_*,\, W_*)_*)_0\, .
$$
The above homomorphism $\sigma$ is the \textit{symbol} map of differential operators between
parabolic bundles.

Take any $\widehat{D}\, \in\, {\rm DO}^k_P(V_*,\, W_*)$. Denote by $\mathbb{V}$ (respectively,
$\mathbb{W}$) the orbifold bundle on $Y$ corresponding to $V_*$ (respectively, $W_*$), and let
$$
D\, \in\, {\rm DO}^k(\mathbb{V},\, \mathbb{W})^\Gamma
$$
be the invariant differential operator given by $\widehat{D}$ using Proposition \ref{prop5}. Let
$$
\sigma(\widehat{D})\, \in\, H^0(X,\, (TX(-S)^{\otimes k}_*\otimes {\rm Hom}(V_*,\, W_*)_*)_0)
$$
be the symbol of $\widehat{D}$ (see \eqref{e56}). Let
$$
\sigma(D)\, \in\, H^0(Y,\, \text{Hom}(\mathbb{V},\, \mathbb{W})\otimes (TY)^{\otimes k})
$$
be the symbol of $D$. We have
$$
\sigma(D)\, \in\, H^0(Y,\, \text{Hom}(\mathbb{V},\, \mathbb{W})\otimes (TY)^{\otimes k})^\Gamma
$$
because $D$ is fixed by the action of $\Gamma$ on ${\rm DO}^k(\mathbb{V},\, \mathbb{W})$. The
proof of the following lemma is straightforward.

\begin{lemma}\label{lem7}
The parabolic vector bundle $TX(-S)^{\otimes k}_*\otimes {\rm Hom}(V_*,\, W_*)_*$ on $X$ corresponds
to the orbifold vector bundle $\text{Hom}(\mathbb{V},\, \mathbb{W})\otimes (TY)^{\otimes k}$ on $Y$.
The natural isomorphism
$$
H^0(X,\, (TX(-S)^{\otimes k}_*\otimes {\rm Hom}(V_*,\, W_*)_*)_0)\, \stackrel{\sim}{\longrightarrow}
\, H^0(Y,\, {\rm Hom}(\mathbb{V},\, \mathbb{W})\otimes (TY)^{\otimes k})^\Gamma
$$
takes the symbol $\sigma(\widehat{D})$ to the symbol $\sigma(D)$.
\end{lemma}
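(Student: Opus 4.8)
The plan is to deduce the lemma entirely from the functoriality of the correspondence between parabolic bundles on $X$ with all weights in $\frac{1}{N_i}\mathbb Z$ and $\Gamma$--equivariant (orbifold) bundles on $Y$, as established in \cite{Bi1}, \cite{Bo1}, \cite{Bo2}. This correspondence is a tensor equivalence: it commutes with tensor products and with duals, hence with $\text{Hom}$. We already know that $V_*$ and $W_*$ correspond to $\mathbb V$ and $\mathbb W$, and that the parabolic line bundle $TX(-S)_*$ of \eqref{e53} corresponds to the orbifold line bundle $TY$ (the observation used just after \eqref{e54}). Consequently $TX(-S)^{\otimes k}_*$ corresponds to $(TY)^{\otimes k}$, while $\text{Hom}(V_*,\, W_*)_*\,=\, V_*^*\otimes W_*$ corresponds to $\mathbb V^*\otimes\mathbb W\,=\,\text{Hom}(\mathbb V,\, \mathbb W)$; tensoring the two gives the first assertion, namely that $TX(-S)^{\otimes k}_*\otimes\text{Hom}(V_*,\, W_*)_*$ corresponds to $\text{Hom}(\mathbb V,\, \mathbb W)\otimes (TY)^{\otimes k}$.

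Next I would recall that whenever a parabolic bundle $P_*$ corresponds to an orbifold bundle $\mathcal P$, the underlying bundle $P_0$ equals the $\Gamma$--invariant part $(\varphi_*\mathcal P)^\Gamma$, exactly as in \eqref{e16} and \eqref{fi2}. As $\varphi_*$ is a direct image functor, $H^0(X,\, \varphi_*\mathcal P)\,=\, H^0(Y,\, \mathcal P)$ compatibly with the $\Gamma$--actions, and taking $\Gamma$--invariants yields the natural isomorphism $H^0(X,\, P_0)\,=\, H^0(Y,\, \mathcal P)^\Gamma$. Applied to $P_*\,=\, TX(-S)^{\otimes k}_*\otimes\text{Hom}(V_*,\, W_*)_*$, this is precisely the isomorphism displayed in the statement.

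Finally I would check compatibility of the two symbol maps. By Proposition \ref{prop5}, $\widehat D$ corresponds to $D$, and under the identification ${\rm Diff}^k_X(V_*,\, W_*)\,=\,(W_*\otimes_{{\mathcal O}_X}{\mathcal J}^k_*\otimes_{{\mathcal O}_X}V_*^*)_0$ this is just the correspondence applied to ${\rm Diff}^k_Y(\mathbb V,\, \mathbb W)\,=\,\mathbb W\otimes_{{\mathcal O}_Y}{\rm Diff}^k_Y({\mathcal O}_Y,\, {\mathcal O}_Y)\otimes_{{\mathcal O}_Y}\mathbb V^*$, using \eqref{e51} together with the fact that ${\mathcal J}^k_*$ corresponds to ${\rm Diff}^k_Y({\mathcal O}_Y,\, {\mathcal O}_Y)\,=\, J^k({\mathcal O}_Y)^*$. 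The symbol map $\sigma$ in \eqref{e56} is obtained by tensoring the quotient $\gamma\colon{\mathcal J}^k_*\to{\mathcal J}^k_*/{\mathcal J}^{k-1}_*\,=\, TX(-S)^{\otimes k}_*$ with ${\rm Id}_{W_*}$ and ${\rm Id}_{V_*^*}$. Because the correspondence is exact and the filtration \eqref{e55} is by construction the image of the order filtration of ${\rm Diff}^k_Y({\mathcal O}_Y,\, {\mathcal O}_Y)$, the functor sends $\gamma$ to the symbol map $\eta\colon{\rm Diff}^k_Y({\mathcal O}_Y,\, {\mathcal O}_Y)\to\text{Sym}^k(TY)\,=\,(TY)^{\otimes k}$ of \eqref{e52}. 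Since $\eta\otimes{\rm Id}$ is the usual symbol map defining $\sigma(D)$ on $Y$, the identification of global sections above carries $\sigma(\widehat D)$ to $\sigma(D)$.

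The only real work here is bookkeeping: one must use the \emph{same} fixed functor, and the same natural isomorphism $H^0(X,\, P_0)\,=\, H^0(Y,\, \mathcal P)^\Gamma$, throughout, so that the diagram relating $\sigma(\widehat D)$ and $\sigma(D)$ genuinely commutes rather than commuting only up to some unspecified automorphism. Since every object in sight is obtained by applying this one functor to standard exact sequences on $Y$, this commutativity is automatic, which is exactly why the proof is straightforward.
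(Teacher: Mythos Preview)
Your proposal is correct and is exactly the kind of argument the paper has in mind: the paper gives no proof at all, stating only that ``the proof of the following lemma is straightforward.'' You have supplied the details of that straightforward proof, namely the tensor functoriality of the parabolic/orbifold correspondence applied to $TX(-S)_*\leftrightarrow TY$ and $V_*,W_*\leftrightarrow\mathbb V,\mathbb W$, together with the fact that the filtration \eqref{e55} corresponds to the order filtration on ${\rm Diff}^k_Y({\mathcal O}_Y,\,{\mathcal O}_Y)$, so that the symbol maps match under the identification of global sections.
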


\section{Parabolic opers and differential operators}

Recall the short exact sequence in \eqref{e37} and the isomorphism in \eqref{l1}.
For notational convenience, $({\mathcal L}_*)^*\,=\, E_*/{\mathcal L}_*$ will be
denoted by ${\mathcal L}^{-1}_*$. For any $j\, \leq\, 1$, the parabolic line bundle
$({\mathcal L}_*)^{\otimes j}$ (respectively, $({\mathcal L}^*_*)^{\otimes j}$)
will be denoted by ${\mathcal L}^j_*$ (respectively, ${\mathcal L}^{-j}_*$).
Also, ${\mathcal L}^{0}_*$ will denote the trivial line bundle ${\mathcal O}_X$ with the
trivial parabolic structure.

We note that
\begin{equation}\label{l2}
{\mathcal L}^{-2}_*\,=\, TX(-S)_*\, ,
\end{equation}
where $TX(-S)_*$ is the parabolic line bundle in \eqref{e53}. From \eqref{e54} and \eqref{l2}
it follows that
\begin{equation}\label{l4}
{\mathcal J}^{j}_*/{\mathcal J}^{j-1}_*\,=\, {\mathcal L}^{-2j}_*
\end{equation}
for all $j\, \geq\, 1$.

For any integer $r\, \geq\, 2$, consider the space of parabolic differential operators of order $r$
$$
{\rm DO}^r_P({\mathcal L}^{1-r}_* ,\, {\mathcal L}^{r+1}_*)\,:=\,
H^0(X,\, {\rm Diff}^r_X({\mathcal L}^{1-r}_*,\, {\mathcal L}^{r+1}_*))
$$
from ${\mathcal L}^{1-r}_*$ to ${\mathcal L}^{r+1}_*$. Let
\begin{equation}\label{l3}
\sigma\,:\, {\rm DO}^r_P({\mathcal L}^{1-r}_* ,\, {\mathcal L}^{r+1}_*)\,\longrightarrow\,
({\mathcal L}^{r+1}_* \otimes (TX(-S)_*)^{\otimes r}\otimes {\mathcal L}^{r-1}_*)_0
\end{equation}
$$
=\,({\mathcal L}^{r+1}_* \otimes {\mathcal L}^{-2r}_*\otimes {\mathcal L}^{r-1}_*)_0
\,=\,({\mathcal L}^0_*)_0\,=\, {\mathcal O}_X
$$
be the symbol map constructed in \eqref{e56} (see \eqref{l4} for the isomorphism used in \eqref{l3}).

Let
\begin{equation}\label{l5}
\widetilde{\rm DO}^r_P({\mathcal L}^{1-r}_* ,\, {\mathcal L}^{r+1}_*)\,\subset\,
{\rm DO}^r_P({\mathcal L}^{1-r}_* ,\, {\mathcal L}^{r+1}_*)
\end{equation}
be the affine subspace consisting of parabolic differential operators whose symbol is
the constant function $1$.

The following Lemma constructs the sub-principal symbol of the operator:

\begin{lemma}\label{lem8}
There is a natural map
$$
\Psi\, :\, \widetilde{\rm DO}^r_P({\mathcal L}^{1-r}_* ,\,
{\mathcal L}^{r+1}_*)\,\longrightarrow\, H^0(X,\, K_X)\, .
$$
\end{lemma}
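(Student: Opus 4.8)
The plan is to build $\Psi$ by passing to the ramified Galois cover $\varphi\colon Y\to X$ of \eqref{h1}, where the statement becomes the classical fact that a symbol--$1$ operator between the two ``extremal'' twists of a square root of $K_Y$ has a coordinate--free sub-principal symbol, and then to descend back to $X$. First I would apply Proposition \ref{prop5} with $k=r$: under the orbifold--parabolic dictionary of \cite{Bi1,Bo1,Bo2}, the parabolic line bundle ${\mathcal L}^{j}_*$ corresponds to the orbifold line bundle ${\mathbf L}^{\otimes j}$ of \eqref{e17}, so any $D\in\widetilde{\rm DO}^r_P({\mathcal L}^{1-r}_*,\,{\mathcal L}^{r+1}_*)$ gives a $\Gamma$--invariant holomorphic differential operator $\widehat D\in {\rm DO}^r({\mathbf L}^{1-r},\,{\mathbf L}^{r+1})^\Gamma$ of order $r$ on $Y$. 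By Lemma \ref{lem7} the symbol of $\widehat D$ equals the symbol of $D$, hence is the constant function $1$. Since ${\mathbf L}^{\otimes 2}\cong K_Y$ by Lemma \ref{lem2}, one has ${\mathbf L}^{r+1}\otimes({\mathbf L}^{1-r})^*\otimes (TY)^{\otimes(r-1)}\cong K_Y^{\otimes r}\otimes (TY)^{\otimes(r-1)}\cong K_Y$, so $\widehat D$ is precisely an order $r$ operator in the extremal situation in which a sub-principal symbol valued in $H^0(Y,\,K_Y)$ is defined.

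Next I would invoke, or re-derive by a short local computation, the classical fact (see \cite{BD1}) that for such an operator the coefficient of $\partial^{r-1}$ transforms as a holomorphic $1$--form. Concretely: pick a local coordinate $w$ on $Y$ and a local trivialization $\ell$ of ${\mathbf L}$ normalized so that $\ell^{\otimes 2}=dw$ under the isomorphism of Lemma \ref{lem2}; writing $\widehat D=\partial_w^{\,r}+a_1\partial_w^{\,r-1}+\cdots+a_r$ in the trivializations of ${\mathbf L}^{1-r}$ and ${\mathbf L}^{r+1}$ induced by $\ell$ (the leading term being $\partial_w^{\,r}$ since the symbol is $1$), the expression $a_1\,dw$ is independent of the choice of $(w,\ell)$. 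The point is that under a change of coordinate the would-be inhomogeneous terms in the transformation of $a_1$ — which involve the leading coefficient $a_0=1$ and derivatives of the coordinate change — cancel exactly for this pair of weights; for $r=2$ one computes directly that $\widetilde a_1\,d\widetilde w=a_1\,dw$ with no correction, and the same binomial/chain-rule bookkeeping handles general $r$. Consequently $a_1\,dw$ patches to a global section $\sigma_{\rm sub}(\widehat D)\in H^0(Y,\,K_Y)$. This verification is the main obstacle: everything else is formal, but one must check (or correctly cite) that the sub-principal coefficient is genuinely tensorial for these specific twists.

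Finally, since $\widehat D$ is $\Gamma$--invariant and the assignment $\widehat D\mapsto\sigma_{\rm sub}(\widehat D)$ commutes with pullback along biholomorphisms, $\sigma_{\rm sub}(\widehat D)$ lies in $H^0(Y,\,K_Y)^\Gamma$. A $\Gamma$--invariant holomorphic $1$--form on $Y$ must vanish to order $N_i-1$ along $\varphi^{-1}(x_i)_{\rm red}$: locally $\theta=f(w)\,dw$, and $\gamma$--invariance forces $f(\zeta w)=\zeta^{-1}f(w)$ for a primitive $N_i$--th root of unity $\zeta$, hence $f(w)=w^{N_i-1}h(w^{N_i})$, so $\theta$ descends to a holomorphic $1$--form on $X$; this gives a canonical identification $H^0(Y,\,K_Y)^\Gamma\cong H^0(X,\,K_X)$. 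I then define $\Psi(D)$ to be the image of $\sigma_{\rm sub}(\widehat D)$ under this identification. Naturality of Proposition \ref{prop5} and Lemma \ref{lem7}, together with the independence of the choice of $\varphi$ observed after Definition \ref{def2}, shows that $\Psi(D)$ does not depend on the auxiliary data, so $\Psi$ is a well-defined natural map. (Equivalently, one could phrase this construction directly on $X$ using the filtration $\cdots\subset{\mathcal J}^{r-2}_*\subset{\mathcal J}^{r-1}_*\subset{\mathcal J}^{r}_*$ and the identification ${\mathcal J}^{r-1}_*/{\mathcal J}^{r-2}_*={\mathcal L}^{-2(r-1)}_*$ from \eqref{l4}, the content being that the symbol--$1$ locus admits a canonical class modulo ${\mathcal J}^{r-2}_*$; but carrying the argument out on $Y$ is in keeping with the rest of the paper.)
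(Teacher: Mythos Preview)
Your proof is correct and follows the same overall strategy as the paper: pass to the cover $Y$ via Proposition~\ref{prop5}, use Lemma~\ref{lem7} to identify the symbol as $1$, produce a sub-principal symbol in $H^0(Y,K_Y)$, and descend via $H^0(Y,K_Y)^\Gamma = H^0(X,K_X)$. The one genuine difference is in how the sub-principal symbol is constructed on $Y$. You take the classical local route: write $\widehat{D}=\partial_w^{\,r}+a_1\partial_w^{\,r-1}+\cdots$ in a trivialization normalized by $\ell^{\otimes 2}=dw$ and verify (or cite from \cite{BD1}) that $a_1\,dw$ is coordinate-independent for these particular twists. The paper instead gives an intrinsic, coordinate-free construction: since $\widehat{D}$ has symbol $1$ it splits the jet sequence $0\to\mathbf{L}^{r+1}\to J^r(\mathbf{L}^{1-r})\to J^{r-1}(\mathbf{L}^{1-r})\to 0$, and composing the resulting section $\tau\colon J^{r-1}(\mathbf{L}^{1-r})\to J^{r}(\mathbf{L}^{1-r})$ with the natural map $J^{r}(\mathbf{L}^{1-r})\to J^1(J^{r-1}(\mathbf{L}^{1-r}))$ yields a holomorphic connection $\nabla$ on $J^{r-1}(\mathbf{L}^{1-r})$; the induced connection on $\det J^{r-1}(\mathbf{L}^{1-r})=\mathcal{O}_Y$ is $d+\theta$ with $\theta\in H^0(Y,K_Y)$, and this $\theta$ is the sub-principal symbol. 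Your approach is more elementary and closer to the classical oper literature; the paper's buys a direct link to the oper connection needed in Theorem~\ref{thm1}, since the vanishing of $\theta$ is visibly the condition that $\nabla$ be an $\mathrm{SL}(r,\mathbb{C})$-connection.
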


\begin{proof}
As in \eqref{e17}, let $\mathbf{L}$ denote the orbifold line bundle on $Y$ corresponding
to ${\mathcal L}$. So the parabolic bundle ${\mathcal L}^{1-r}_*$ (respectively, ${\mathcal L}^{r+1}_*$)
corresponds to the orbifold line bundle $\mathbf{L}^{1-r}$ (respectively, $\mathbf{L}^{r+1}$).
Take any $$D\, \in\, \widetilde{\rm DO}^r_P({\mathcal L}^{1-r}_* ,\, {\mathcal L}^{r+1}_*).$$
Now Proposition \ref{prop5} says that $D$ corresponds to a $\Gamma$--invariant holomorphic differential
operator of order $r$ from $\mathbf{L}^{1-r}$ to $\mathbf{L}^{r+1}$. Let
\begin{equation}\label{a1}
{\mathcal D}\,\in\,{\rm DO}^r(\mathbf{L}^{1-r},\, \mathbf{L}^{r+1})^\Gamma
\end{equation}
be the $\Gamma$--invariant differential operator corresponding to $D$. As the orbifold bundle
$\mathbf{L}^2$ is isomorphic to $TY$ (see Lemma \ref{lem2}), the symbol of ${\mathcal D}$ is a
section of ${\mathcal O}_Y$. Since the symbol of
$D$ is the constant function $1$, from Lemma \ref{lem7} it follows that the symbol
of $\mathcal D$ is the constant function $1$ on $Y$.

We will now show that a differential operator ${\mathbf D}\, \in\, {\rm DO}^r(\mathbf{L}^{1-r},\, \mathbf{L}^{r+1})$
of symbol $1$ produces a section
\begin{equation}\label{ds}
\theta_{\mathbf D}\, \in\, H^0(Y,\, K_Y)\, .
\end{equation}

Consider the short exact sequence of jet bundles
\begin{equation}\label{js}
0 \, \longrightarrow\, \mathbf{L}^{1-r}\otimes K^{\otimes r}_Y\,=\, \mathbf{L}^{r+1}\,
\stackrel{\mu}{\longrightarrow}\, J^r(\mathbf{L}^{1-r})\, \stackrel{\nu}{\longrightarrow}\,
J^{r-1}(\mathbf{L}^{1-r})\, \longrightarrow\, 0
\end{equation}
(see Lemma \ref{lem2} for the above isomorphism) together with the homomorphism
$$
{\mathbf D}'\,:\, J^r(\mathbf{L}^{1-r}) \, \longrightarrow\, \mathbf{L}^{r+1}
$$
defining the given differential operator $\mathbf D$. Since the symbol of $\mathbf D$ is $1$, we have
$$
{\mathbf D}'\circ\mu\, =\, {\rm Id}_{\mathbf{L}^{r+1}}\, ,
$$
where $\mu$ is the homomorphism in \eqref{js}. Therefore, ${\mathbf D}'$ produces a holomorphic splitting
of the short exact sequence in \eqref{js}. Let
\begin{equation}\label{js3}
\tau\,:\, J^{r-1}(\mathbf{L}^{1-r})\, \longrightarrow\, J^{r}(\mathbf{L}^{1-r})
\end{equation}
be the holomorphic homomorphism given by this splitting of the short exact sequence in \eqref{js}, so $\tau$
is uniquely determined by the following two conditions:
\begin{itemize}
\item $\nu\circ\tau\,=\, {\rm Id}_{J^{r-1}(\mathbf{L}^{1-r})}$, where $\nu$ is the projection in \eqref{js}, and

\item ${\rm image}(\tau)\,=\, \text{kernel}({\mathbf D}')\, \subset\, J^r(\mathbf{L}^{1-r})$.
\end{itemize}
Next consider the following natural commutative diagram of homomorphisms of jet bundles:
\begin{equation}\label{js2}
\begin{matrix}
&& 0 && 0 && 0\\
&& \Big\downarrow && \Big\downarrow && \Big\downarrow\\
0 & \longrightarrow & \mathbf{L}^{1-r}\otimes K^{\otimes r}_Y\,=\, \mathbf{L}^{r+1} &
\stackrel{\mu}{\longrightarrow} & J^r(\mathbf{L}^{1-r}) & \stackrel{\nu}{\longrightarrow} &
J^{r-1}(\mathbf{L}^{1-r}) & \longrightarrow & 0\\
&& \Big\downarrow && \,\,\, \Big\downarrow\varpi && \Big\Vert\\
0 & \longrightarrow & J^{r-1}(\mathbf{L}^{1-r})\otimes K_Y &
\longrightarrow & J^1(J^{r-1}(\mathbf{L}^{1-r})) & \stackrel{\alpha}{\longrightarrow} &
J^{r-1}(\mathbf{L}^{1-r}) & \longrightarrow & 0\\
&& \Big\downarrow && \,\,\,\Big\downarrow\zeta &&\\
0 & \longrightarrow & J^{r-2}(\mathbf{L}^{1-r})\otimes K_Y &
\stackrel{=}{\longrightarrow} & J^{r-2}(\mathbf{L}^{1-r})\otimes K_Y &&\\
&& \Big\downarrow && \Big\downarrow &&\\
&& 0 && 0 &&
\end{matrix}
\end{equation}
where the horizontal sequences are the natural jet sequences, and the vertical sequence in the left is
the jet sequence tensored with $K_Y$; the homomorphism $\varpi$ is the natural homomorphism of jet bundles.
The homomorphism $\zeta$ in \eqref{js2} is constructed as follows:
We have the natural homomorphism $$h_1\,:\, J^1(J^{r-1}(\mathbf{L}^{1-r}))\, \longrightarrow\,
J^{r-1}(\mathbf{L}^{1-r}).$$ On the other hand, we have the composition of homomorphisms
$$
J^1(J^{r-1}(\mathbf{L}^{1-r}))\, \longrightarrow\, J^1(J^{r-2}(\mathbf{L}^{1-r}))\,
\longrightarrow\, J^{r-1}(\mathbf{L}^{1-r}),
$$
which will be denoted by $h_2$. Now, we have $\zeta\,=\, h_1-h_2$; note that $J^{r-2}(\mathbf{L}^{1-r})\otimes K_Y$
is a subbundle of $J^{r-1}(\mathbf{L}^{1-r})$.

Next consider the homomorphism
$$
\varpi\circ\tau\,:\, J^{r-1}(\mathbf{L}^{1-r})\, \longrightarrow\, J^1(J^{r-1}(\mathbf{L}^{1-r}))\, ,
$$
where $\tau$ and $\varpi$ are the homomorphisms in \eqref{js3} and \eqref{js2} respectively. We have
\begin{equation}\label{el}
\alpha\circ (\varpi\circ\tau)\,=\, {\rm Id}_{J^{r-1}(\mathbf{L}^{1-r})}\, ,
\end{equation}
where $\alpha$ is the projection in \eqref{js2}, because \eqref{js2} is a commutative diagram.

{}From \eqref{el} it follows immediately that $\varpi\circ\tau$ gives a holomorphic splitting
of the bottom exact sequence in \eqref{js2}. But a holomorphic splitting
of the bottom exact sequence in \eqref{js2} is a holomorphic connection on $J^{r-1}(\mathbf{L}^{1-r})$.

Let $\nabla$ denote the holomorphic connection on $J^{r-1}(\mathbf{L}^{1-r})$ given by $\varpi\circ\tau$.
The holomorphic connection on $\bigwedge^r J^{r-1}(\mathbf{L}^{1-r})\,=\, {\mathcal O}_Y$ (see Lemma
\ref{lem2}) induced by $\nabla$ will be denoted by $\nabla^0$. So the connection $\nabla^0$ is of the form
$$
\nabla^0\,=\, d+\theta_{\mathbf D}\, ,
$$
where $\theta_{\mathbf D}\, \in\, H^0(Y,\, K_Y)$ and $d$ is the de Rham differential on ${\mathcal O}_Y$. This
$\theta_{\mathbf D}$ is the holomorphic $1$-form in \eqref{ds}.

By the construction of it, the form $\theta_{\mathbf D}$ vanishes identically if and only if
the above connection $\nabla$ on 
$J^{r-1}(\mathbf{L}^{1-r})$ induces the trivial connection on $\bigwedge^r J^{r-1}(\mathbf{L}^{1-r})\,=\,
{\mathcal O}_Y$. Therefor $\theta_{\mathbf D}$ should be seen as a sub-principal symbol.

Consider $\theta_{\mathcal D}\, \in\, H^0(Y,\, K_Y)$ (as in \eqref{ds}) for the differential operator
$\mathcal D$ in \eqref{a1}. Since $\mathcal D$ is $\Gamma$--invariant, we know that $\theta_{\mathcal D}$
is also $\Gamma$--invariant. On the other hand,
$$
H^0(Y,\, K_Y)^\Gamma\,=\, H^0(X,\, K_X)\, .
$$
The element of $H^0(X,\, K_X)$ corresponding to $\theta_{\mathcal D}$ will be denoted by
$\theta'_{\mathcal D}$.

Now we have a map $$\Psi\,:\, \widetilde{\rm DO}^r_P({\mathcal L}^{1-r}_* ,\, {\mathcal L}^{r+1}_*)
\,\longrightarrow\, H^0(X,\, K_X)$$ that sends any $D$ to $\theta'_{\mathcal D}$ constructed above from $D$.
\end{proof}

The following main Theorem deals with the space of all parabolic ${\rm SL}(r,{\mathbb C})$--opers on $X$ (see Definition \ref{def1}) with 
given singular set $S\,:=\, \{x_1,\, \cdots,\, x_n\}\, \subset\, X$ and fixed integers $c_i\,=\, N_i$ (see \eqref{e2}).

\begin{theorem}\label{thm1}
The space of all parabolic ${\rm SL}(r,{\mathbb C})$--opers on $X$ is identified with the inverse image
$$\Psi^{-1}(0)\, \subset\, \widetilde{\rm DO}^r_P({\mathcal L}^{1-r}_* ,\, {\mathcal L}^{r+1}_*),$$
where $\Psi$ is the map in Lemma \ref{lem8}.
\end{theorem}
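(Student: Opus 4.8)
The plan is to transport both sides of the asserted identification to the ramified Galois cover $\varphi\,:\,Y\,\to\,X$ of \eqref{d2} with $c_i\,=\,N_i$, where the statement becomes the classical Beilinson--Drinfeld correspondence between ${\rm SL}(r,{\mathbb C})$--opers and $r$-th order differential operators, now carried out $\Gamma$--equivariantly. First I would record the reduction. By Proposition \ref{prop4}, parabolic ${\rm SL}(r,{\mathbb C})$--opers on $X$ are the same as equivalence classes of equivariant ${\rm SL}(r,{\mathbb C})$--connections on ${\rm Sym}^{r-1}({\mathcal V})$, where ${\mathcal V}$ is the orbifold bundle in \eqref{e18}. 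By Proposition \ref{prop5} applied with $V_*\,=\,{\mathcal L}^{1-r}_*$ and $W_*\,=\,{\mathcal L}^{r+1}_*$ (whose associated orbifold line bundles are ${\mathbf L}^{1-r}$ and ${\mathbf L}^{r+1}$, as in the proof of Lemma \ref{lem8}), together with the compatibility of symbols in Lemma \ref{lem7}, the affine space $\widetilde{\rm DO}^r_P({\mathcal L}^{1-r}_*,\,{\mathcal L}^{r+1}_*)$ of \eqref{l5} is identified with the space of $\Gamma$--invariant order $r$ differential operators ${\mathbf L}^{1-r}\,\to\,{\mathbf L}^{r+1}$ whose scalar symbol is the constant function $1$. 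Finally, by the construction of $\Psi$ in Lemma \ref{lem8}, such a $D$ lies in $\Psi^{-1}(0)$ exactly when the $1$--form $\theta_{\mathcal D}\,\in\,H^0(Y,\,K_Y)$ of \eqref{ds} attached to the corresponding $\Gamma$--invariant operator ${\mathcal D}$ vanishes; equivalently (again by the proof of Lemma \ref{lem8}), when the connection $\nabla$ that ${\mathcal D}$ induces on $J^{r-1}({\mathbf L}^{1-r})$ restricts to the trivial connection on $\bigwedge^r J^{r-1}({\mathbf L}^{1-r})\,=\,{\mathcal O}_Y$. So it suffices to produce a natural, $\Gamma$--equivariant bijection between equivalence classes of equivariant ${\rm SL}(r,{\mathbb C})$--connections on ${\rm Sym}^{r-1}({\mathcal V})$ and $\Gamma$--invariant symbol $1$ operators ${\mathbf L}^{1-r}\,\to\,{\mathbf L}^{r+1}$ inducing the trivial connection on that determinant line.

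Next I would identify the oper bundle with a jet bundle. Since $\mathrm{Ext}^1({\mathbf L}^*,{\mathbf L})\,=\,H^1(Y,K_Y)\,=\,{\mathbb C}$ (Lemma \ref{lem2}), the non-split extension \eqref{e18} is unique up to isomorphism, and comparing it with the first jet sequence $0\,\to\,{\mathbf L}^{-1}\otimes K_Y\,=\,{\mathbf L}\,\to\,J^1({\mathbf L}^{-1})\,\to\,{\mathbf L}^{-1}\,\to\,0$ gives a $\Gamma$--equivariant isomorphism ${\mathcal V}\,\cong\,J^1({\mathbf L}^{-1})$. Composing the induced isomorphism ${\rm Sym}^{r-1}({\mathcal V})\,\cong\,{\rm Sym}^{r-1}(J^1({\mathbf L}^{-1}))$ with the $(r-1)$--fold jet-multiplication homomorphism ${\rm Sym}^{r-1}(J^1({\mathbf L}^{-1}))\,\to\,J^{r-1}({\mathbf L}^{1-r})$ --- which is $\Gamma$--equivariant, carries the symmetric-power filtration to the jet filtration, and is an isomorphism on associated graded bundles (both graded bundles being $\bigoplus_{k=0}^{r-1}{\mathbf L}^{1-r}\otimes K_Y^{\otimes k}$) --- yields a $\Gamma$--equivariant filtered isomorphism ${\rm Sym}^{r-1}({\mathcal V})\,\cong\,J^{r-1}({\mathbf L}^{1-r})$ under which the filtration \eqref{e38} (transported to $Y$ by \eqref{e39} and \eqref{l1}) becomes the canonical jet filtration, with successive quotients ${\mathbf L}^{1-r}\otimes K_Y^{\otimes(r-j)}\,=\,{\mathbf L}^{\,r+1-2j}$ matching those computed after \eqref{e39}.

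Finally I would invoke the classical oper/operator correspondence on $Y$, run $\Gamma$--equivariantly. Given an equivariant ${\rm SL}(r,{\mathbb C})$--connection $D$ on ${\rm Sym}^{r-1}({\mathcal V})\,\cong\,J^{r-1}({\mathbf L}^{1-r})$, Lemma \ref{lem6}(2) and Corollary \ref{cor4} show that $D$ is Griffiths-transversal with respect to the jet filtration and that all its second fundamental forms are nonzero; hence $(J^{r-1}({\mathbf L}^{1-r}),\,D,\,\text{jet filtration})$ is an ${\rm SL}(r,{\mathbb C})$--oper in the sense of \cite{BD1}, to which the standard correspondence attaches a differential operator of order $r$ from ${\mathbf L}^{1-r}$ to ${\mathbf L}^{1-r}\otimes K_Y^{\otimes r}\,=\,{\mathbf L}^{r+1}$ of symbol $1$. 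In the reverse direction, the construction in the proof of Lemma \ref{lem8} does precisely this: a symbol $1$ operator ${\mathbf D}$ produces, via the splitting $\tau$ of \eqref{js} and the homomorphism $\varpi\circ\tau$ of \eqref{js2}, a connection on $J^{r-1}({\mathbf L}^{1-r})$ compatible with the jet filtration, and the resulting oper determines ${\mathbf D}$; since the jet filtration is canonical it is preserved by every bundle automorphism, so the assignment descends to a bijection on equivalence classes, and every step is manifestly $\Gamma$--equivariant. By Lemma \ref{lem8} the connection induced on $\bigwedge^r J^{r-1}({\mathbf L}^{1-r})\,=\,{\mathcal O}_Y$ is $d+\theta_{\mathcal D}$, so $D$ is an ${\rm SL}(r,{\mathbb C})$--connection precisely when $\theta_{\mathcal D}\,=\,0$, i.e. by the first step precisely when the corresponding element of $\widetilde{\rm DO}^r_P({\mathcal L}^{1-r}_*,\,{\mathcal L}^{r+1}_*)$ lies in $\Psi^{-1}(0)$; combining this with the reduction of the first step gives the theorem.

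The step I expect to be the main obstacle is the second one: verifying carefully that ${\rm Sym}^{r-1}({\mathcal V})$ equipped with the filtration \eqref{e38} really is the standard oper jet bundle $J^{r-1}({\mathbf L}^{1-r})$ with its jet filtration, $\Gamma$--equivariantly --- in particular that the jet-multiplication map is an isomorphism of filtered orbifold bundles matching the successive quotients of Section \ref{se4} --- together with checking that the classical oper/operator bijection is compatible with the equivalence relation of Definition \ref{def1} and with the $\Gamma$--actions, so that the final count of equivalence classes is exactly the count of invariant operators.
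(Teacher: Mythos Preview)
Your proposal is correct and follows essentially the same route as the paper's proof: both transport the problem to the cover $Y$ via Proposition \ref{prop4}, Proposition \ref{prop5}, Lemma \ref{lem7} and Lemma \ref{lem8}, and then invoke the classical Beilinson--Drinfeld correspondence between ${\rm SL}(r,{\mathbb C})$--opers on $Y$ and symbol-$1$ differential operators ${\mathbf L}^{1-r}\to{\mathbf L}^{r+1}$ with vanishing sub-principal symbol, carried out $\Gamma$--equivariantly. The only difference is one of exposition: the paper simply cites \cite[p.~13]{BD1} for the classical correspondence on $Y$, whereas you unpack its mechanism (the filtered identification ${\rm Sym}^{r-1}({\mathcal V})\cong J^{r-1}({\mathbf L}^{1-r})$ and the splitting $\varpi\circ\tau$ already appearing in the proof of Lemma \ref{lem8}); the obstacle you flag is exactly the content of that citation and is standard.
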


\begin{proof}
This theorem will be proved using Proposition \ref{prop4}, Proposition \ref{prop5}, Lemma \ref{lem7} and Lemma \ref{lem8}.

As before, fix a ramified Galois covering
$$
\varphi\, :\, Y\, \longrightarrow\, X
$$
satisfying the following two conditions:
\begin{itemize}
\item $\varphi$ is unramified over the complement $X\setminus S$, and

\item for every $x_i\, \in\, S$ and one (hence every) point $y\, \in\, \varphi^{-1}(x_i)$,
the order of ramification of $\varphi$ at $y$ is $2N_i+1$.
\end{itemize}
As before, $\Gamma$ denotes $\text{Aut}(Y/X)$.
Parabolic ${\rm SL}(r,{\mathbb C})$--opers on $X$ are in a natural bijective correspondence with the 
equivariant ${\rm SL}(r,{\mathbb C})$--opers on $Y$ (see Proposition \ref{prop4}). Equivariant ${\rm SL}(r,{\mathbb C})$--opers on $Y$ are in a
natural bijective correspondence with the
subspace of ${\mathcal D}\,\in\,{\rm DO}^r(\mathbf{L}^{1-r},\, \mathbf{L}^{r+1})^\Gamma$
(see \eqref{a1}) defined by all invariant differential operators $D$ satisfying the following two conditions:
\begin{itemize}
\item the symbol of $D$ is the constant function $1$, and

\item the element in $H^0(Y,\,K_Y)$ corresponding to $D$ (see \eqref{ds}) vanishes (this is equivalent
to the vanishing of the sub-principal symbol of $D$; see \cite[p.~13]{BD1}).
\end{itemize}
(See Proposition \ref{prop5} and Lemma \ref{lem7}.)

This subspace of ${\rm DO}^r(\mathbf{L}^{1-r},\, \mathbf{L}^{r+1})^\Gamma$ is 
in a natural bijective correspondence with $$\Psi^{-1}(0)\, \subset\, \widetilde{\rm DO}^r_P({\mathcal L}^{1-r}_* ,\,
{\mathcal L}^{r+1}_*),$$ where $\Psi$ is the map in Lemma \ref{lem8}.
\end{proof}

\section*{Acknowledgements}

We are very grateful to the referee for helpful comments.
This work has been supported by the French government through the UCAJEDI Investments in the Future 
project managed by the National Research Agency (ANR) with the reference number ANR2152IDEX201. The 
first author is partially supported by a J. C. Bose Fellowship, and school of mathematics, TIFR, is 
supported by 12-R$\&$D-TFR-5.01-0500.


\end{document}